\documentclass[12pt,a4paper]{amsart}

\usepackage[mathscr]{eucal}
\usepackage{amssymb}
\usepackage{amsxtra}
\usepackage{hyperref, cleveref}
\usepackage{graphics,color,graphicx}
\usepackage[T1]{fontenc}
\usepackage[utf8]{inputenc}

\textwidth	16cm
\oddsidemargin  -0.10 in     
\evensidemargin -0.10 in     
\newcommand{\fF}{\mathfrak{F}}

\newcommand{\fL}{\mathfrak{L}}

\newcommand{\pA}{\mathcal{A}}
\newcommand{\pB}{\mathcal{B}}

\newcommand{\pL}{\mathcal{L}}

\newcommand{\pS}{\mathcal{S}}


\newcommand{\eK}{\mathscr{K}}
\newcommand{\eL}{\mathscr{L}}
\newcommand{\eM}{\mathscr{M}}
\newcommand{\eN}{\mathscr{N}}
\newcommand{\eP}{\mathscr{P}}

\newcommand{\eR}{\mathscr{R}}

\newcommand{\eV}{\mathscr{V}}

\newcommand{\eW}{\mathscr{W}}

\newcommand{\bC}{\mathbb{C}}

\newcommand{\bM}{\mathbb{M}}
\newcommand{\bN}{\mathbb{N}}



\newcommand{\bdB}{\boldsymbol{B}}

\newcommand{\bdD}{\boldsymbol{D}}
\newcommand{\bdE}{\boldsymbol{E}}

\newcommand{\bdI}{\boldsymbol{I}}
\newcommand{\bdJ}{\boldsymbol{J}}

\newcommand{\bdN}{\boldsymbol{N}}

\newcommand{\bdS}{\boldsymbol{S}}
\newcommand{\bdT}{\boldsymbol{T}}
\newcommand{\bdU}{\boldsymbol{U}}
\newcommand{\bdV}{\boldsymbol{V}}
\newcommand{\bde}{\boldsymbol{e}}
\newcommand{\bdf}{\boldsymbol{f}}

\newcommand{\bdx}{\boldsymbol{x}}
\newcommand{\bdy}{\boldsymbol{y}}

\newcommand{\bdo}{\boldsymbol{0}}


\newcommand{\diag}{{\rm diag}}

\DeclareMathOperator{\Lat}{Lat}

\DeclareMathOperator{\Alg}{Alg}
\DeclareMathOperator{\Refl}{Ref}

\DeclareMathOperator{\Col}{Col}

\newcommand{\Int}{{\rm i}}

\newtheorem{theorem}{Theorem}
\newtheorem{proposition}[theorem]{Proposition}
\newtheorem{lemma}[theorem]{Lemma}
\newtheorem{corollary}[theorem]{Corollary}
\theoremstyle{definition}

\newtheorem{example}[theorem]{Example}


\numberwithin{equation}{section}

\begin{document}

\title[Collineations preserving the lattice of invariant subspaces]{Collineations preserving the lattice of  invariant subspaces of a linear transformation}
\author[J. Bra\v{c}i\v{c}]{Janko Bra\v{c}i\v{c}}
\address{Faculty of Natural Sciences and Engineering, University of Ljubljana, A\v{s}ker\v{c}eva c. 12, SI-1000 Ljubljana, Slovenia}
\email{janko.bracic@ntf.uni-lj.si}
\author[M. Kandi\'{c}]{Marko Kandi\'{c}}
\address{Faculty of Mathematics and Physics, University of Ljubljana, Jadranska 19, SI-1000 Ljubljana, Slovenia}
\email{marko.kandic@fmf.uni-lj.si}

\keywords{Invariant subspace, Collineation}
\subjclass[2020]{Primary 47A15, Secondary 15A04}

\begin{abstract}
Given a linear transformation $A$ on a finite-dimensional complex vector space $\eV$, in this paper we study the group $\Col(A)$ consisting of those invertible linear transformations $S$ on $\eV$ for which the mapping $\Phi_S$ defined as $\Phi_S\colon \eM\mapsto S\eM$ is an automorphism of the lattice $\Lat(A)$ of all invariant subspaces of $A$. By using the primary decomposition of $A$, we first reduce the problem of characterizing $\Col(A)$ to the problem of characterizing the group $\Col(N)$ of a given nilpotent linear transformation $N$. While $\Col(N)$ always contains all invertible linear transformations of the commutant $(N)'$ of $N$, it is always contained in the reflexive cover $\Alg\Lat(N)'$ of $(N)'$. We prove that  $\Col(N)$ is a proper subgroup of $(\Alg\Lat(N)')^{-1}$ if and only if at least two Jordan blocks in the Jordan decomposition of $N$ are of dimension $2$ or more. We also determine the group $\Col(\bdJ_2\oplus \bdJ_2)$.
\end{abstract}
\maketitle

\section{Introduction} \label{sec01}
\setcounter{theorem}{0}

Let $\fL$ be the lattice of all subspaces of a finite-dimensional complex vector space $\eV$ ordered by the set inclusion.
A bijective map $\Phi: \fL \to \fL$ is said to be a projectivity (see \cite[p. 40]{Bae}) if
$$\eM_1\leq \eM_2\iff \Phi(\eM_1)\leq \Phi(\eM_2).$$
Hence, projectivities are precisely automorphisms of the lattice $\fL$.
If $S$ is an invertible semi-linear transformation on $\eV$, then it is easy to see that the mapping $\Phi_S\colon \fL\to \fL$ given by
$\Phi_S(\eM)=S\eM$ defines a projectivity on $\fL$. If a projectivity $\Phi_S$ is induced by a linear transformation $S$,
then it is called a collineation (see \cite[p. 62]{Bae}). If $\dim(\eV)\geq 3$, then by the first fundamental theorem of projective geometry
every projectivity is induced by a semi-linear transformation.
McAsey and Muhly \cite {MM} studied automorphisms of the sublattice $\Lat(A)$ of $\fL$ of all invariant subspaces of a given linear transformation $A$ on $\eV$. They proved in \cite[Theorem 1.1]{MM} that for a linear transformation $A$ satisfying a certain condition every automorphism $\Phi$ of $\Lat(A)$ which is continuous with respect to the gap-topology (for the definition see \cite{DP}) is of the form $\Phi_S$ where $S$ is a
sum of a linear and a conjugate linear transformation on $\eV$.

Inspired by the first fundamental theorem of projective geometry and the result of McAsey and Muhly, in this paper we study those invertible linear transformations $S$ on $\eV$ which induce automorphisms $\Phi_S$ of $\Lat(A)$. More precisely, we are interested in the subgroup
\begin{equation*}
\Col(A)=\{ S\in \pL(\eV)^{-1};\quad \eM\in \Lat(A) \iff S\eM\in \Lat(A)\}
\end{equation*}
of the group $\pL(\eV)^{-1}$ of all invertible linear transformations on $\eV$. Hence, $S\in \Col(A)$ if and only if $\Phi_S$ is a projectivity which satisfies $\Phi_S\bigl(\Lat(A)\bigr)=\Lat(A)$. With a slight abuse of terminology the elements of $\Col(A)$ are called collineations of $A$.

The paper is organized as follows. In \Cref{secPrelim} we introduce basic notions and some results needed throughout
the text. In \Cref{Section01} we prove that every invertible linear transformation in $(A)'$ belongs to $\Col(A)$  and that every cyclic subspace of a nilpotent linear transformation is a cycle in $\Lat(A)$.
In Section \ref{sec02}, for a given linear transformation $A\in \eV$ we reduce the problem of characterizing $\Col(A)$ to the problem of characterizing the group of collineations of a given nilpotent linear transformation by using the primary decomposition of $A$. Here it is worthwile noting that, in general,  $\Col(A)$ is not a direct sum of groups of collineations of primary factors of $A$.
Section \ref{sec03} is devoted to the reflexive cover $\Refl((N)')=\Alg\Lat(N)'$ of the commutant $(N)'$ of a nilpotent linear transformation $N$. The main results of the paper are in
Section \ref{sec04} where we consider collineations of nilpotent linear transformations. \Cref{prop07} together with \Cref{prop08} yields that for a nilpotent linear transformation $N$ we always have
$$\left((N)'\right)^{-1}\subseteq \Col(N)\subseteq (\Alg\Lat(N)')^{-1}.$$ In \Cref{theo01} we prove that the inclusion $\Col(N)\subseteq (\Alg\Lat(N)')^{-1}$ is strict if and only if at least two Jordan blocks in the Jordan decomposition of $N$ are of dimension $2$ or more.
In the last section we
determine $\Col(\bdJ_2\oplus \bdJ_2)$ where $\bdJ_2$ denotes the $2\times 2$ nilpotent Jordan Block. This is the simplest case when the group $\Col(N)$ is a proper subgroup of $\bigl(\Alg\Lat(N)'\bigr)^{-1}$.

\section{Preliminaries}\label{secPrelim}

Let $\eV$ be a finite-dimensional complex vector space and let $\fL$ be the family of all subspaces of $\eV$. The set inclusion on $\fL$ induces a partial ordering defined by
$\eM_1\leq \eM_2$ if and only if $\eM_1\subseteq \eM_2$. Then $(\fL,\leq)$ is a lattice where the lattice operations are given by
$$\eM_1\vee \eM_2=\eM_1+\eM_2 \qquad \textrm{and} \qquad \eM_1\wedge \eM_2=\eM_1\cap \eM_2.$$
The linear span of a non-empty set $\mathcal S$ of vectors in $\eV$ is denoted by $\bigvee \mathcal S$. If $\mathcal S=\{x\}$, then we write $[x]$ instead of $\bigvee\{x\}$.
If $\mathcal S=\{x_1,\ldots,x_k\}$, then we clearly have
$\bigvee\{x_1,\ldots,x_k\}=[x_1]\vee \cdots \vee[x_k]=[x_1]+\cdots+[x_k]$.

By $\pL(\eV)$ we will denote the algebra
of all linear transformations on $\eV$. The kernel and the range of a linear transformation $A\in \pL(\eV)$ are denoted by $\eN(A)$ and $\eR(A)$, respectively. For a subalgebra $\pA\subseteq \pL(\eV)$ which contains the identity transformation $I$ we denote by $\pA^{-1}$
the group of all invertible linear transformations in $\pA$. For a given transformation $A\in \pL(\eV)$ we denote by $m_A$ and  $(A)=\{ p(A);\; p(z)\in \bC[z]\}$ the minimal polynomial of $A$ and the subalgebra of $\pL(\eV)$ which is generated by $A$ and $I$, respectively. If $x\in \eV$, then $(A)_x=\{ p(A)x;\;\; p(z)\in \bC[z]\}$ is called the cyclic subspace of $A$
generated by $x$. A subspace $\eM$ is invariant under a set $\mathcal S\subseteq \eL(\eV)$ if $\mathcal S\eM\subseteq\eM$. It is clear that $(A)_x$ is the smallest subspace which contains $x$ and is invariant under $A$. Since the sum and the intersection of invariant subspaces of $A\in \pL(\eV)$ are again invariant under $A$, the set $\Lat(A)$ of all invariant subspaces of $A$ is a sublattice of $\fL$.
It is obvious that $\fL=\Lat(I)$.
For $\eM_1,\eM_2\in \Lat(A)$, the set $[\eM_1,\eM_2]=\{\eP\in \Lat(A);\; \eM_1\leq \eP\leq \eM_2\}$ is called an interval in $\Lat(A)$. A subspace  $\eM\in \Lat(A)$ is a cycle if $[\{0\},\eM]$ is a finite chain in $\Lat(A)$.
For linear transformations of $A\in \pL(\eV)$ and $B\in \pL(\eW)$, we denote with $(A,B)^{\Int}$ the set of all
intertwiners of $A$ and $B$, that is, the set of all
 linear transformations $T\colon \pL(\eW)\to \pL(\eV)$ such that $AT=TB$. If $\eV=\eW$ and $A=B$, then $(A,A)^{\Int}=\{ T\in \pL(\eV);\; TA=AT\}$ is denoted by $(A)'$, and is called the commutant of $A$. A subspace $\eM$ of $\eV$ which is invariant under $(A)'$ is called a hyperinvariant subspace of $A$.

We conclude the preliminary section by introducing notation about complex matrices.
For $m, n\in \bN$ we denote by $\bC^n$ the space of all columns with $n$ rows and by $\bde_1,\ldots,\bde_n$ we denote its
standard basis vectors. By $\bM_{m\times n}$ we denote the space of all $m\times n$ complex matrices.
The zero matrix is denoted by $\bdo$, or by $\bdo_{m\times n}$ if we want to point out the dimension of the matrix.  The $m\times m$ identity matrix is denoted by $\bdI_m$ and the $m\times m$ Jordan block is denoted by $\bdJ_m$. It is easily seen that $(\bdJ_m)'=(\bdJ_m)$, that is, $(\bdJ_m)'$ consists of precisely all upper-triangular Toeplitz $m\times m$ matrices. The standard unit matrix $\bdE_{k,l}\in \bM_{m\times n}$ is the matrix whose $(k,l)$-th entry is $1$ while other entries are all zero. 

\section{General results about cyclic subspaces}\label{Section01}

In this section we consider some general results about collineations which will be needed throughout the text. In \Cref{prop08} we prove that $\Col(A)$ always contains invertible transformations of the commutant $(A)'$.

\begin{lemma} \label{lem20}
Let $T\in \pL(\eV)^{-1}$. Then $T\in \Col(A)$ if and only if $T(A)_x\in \Lat(A)$ for every $x\in \eV$.
\end{lemma}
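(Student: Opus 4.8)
The forward direction is immediate: if $T \in \Col(A)$, then by definition $T\eM \in \Lat(A)$ for every $\eM \in \Lat(A)$, and in particular for $\eM = (A)_x$, which is invariant under $A$. So the content is in the converse. The plan is to assume $T(A)_x \in \Lat(A)$ for all $x \in \eV$ and deduce that $T\eM \in \Lat(A)$ for an arbitrary $\eM \in \Lat(A)$, and then handle the ``if and only if'' (i.e.\ the membership condition $S\eM \in \Lat(A) \iff \eM \in \Lat(A)$ in the definition of $\Col(A)$) by a symmetry argument applied to $T^{-1}$.

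First I would observe that every $\eM \in \Lat(A)$ is a sum of cyclic subspaces of $A$: indeed, picking any spanning set $x_1,\dots,x_k$ of $\eM$, each $(A)_{x_i}$ is contained in $\eM$ (since $\eM$ is $A$-invariant and contains $x_i$), and conversely $\eM = [x_1] + \dots + [x_k] \subseteq (A)_{x_1} + \dots + (A)_{x_k}$, so $\eM = \sum_{i=1}^k (A)_{x_i}$. Now applying $T$ and using that $T$ is linear (hence additive on subspaces), $T\eM = \sum_{i=1}^k T(A)_{x_i}$. By hypothesis each $T(A)_{x_i}$ lies in $\Lat(A)$, and since $\Lat(A)$ is a sublattice of $\fL$ it is closed under finite sums; therefore $T\eM \in \Lat(A)$. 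This shows $\Phi_T$ maps $\Lat(A)$ into $\Lat(A)$.

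To finish, I need the reverse implication $T\eM \in \Lat(A) \Rightarrow \eM \in \Lat(A)$, equivalently that $\Phi_{T^{-1}}$ also maps $\Lat(A)$ into $\Lat(A)$. The key point is that the hypothesis is symmetric under passing to the inverse: I claim $T^{-1}(A)_x \in \Lat(A)$ for every $x$ as well. This is where a short argument is needed. Since $T$ is invertible, $\Phi_T$ is an injective map of $\fL$ into itself; restricted to the finite chain $[\{0\},(A)_x]$ (which is a chain because $(A)_x$ is a cyclic subspace, hence a cycle by the result of \Cref{Section01}) it is an order-isomorphism onto the chain $[\{0\},T(A)_x]$ in $\Lat(A)$, and a dimension count shows it is in fact a bijection between these two finite chains. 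Running this with $y = T^{-1}x$ in place of a direct argument: consider the cyclic subspace $(A)_{T^{-1}x}$; I want to show its image under $T$, which contains $x$, forces $T^{-1}(A)_x$ to be invariant. The cleanest route is to use that $\{T(A)_y : y \in \eV\}$ is, by the already-proved half, a collection of $A$-invariant subspaces whose union over all $y$ is all of $\eV$, and that $\Phi_T$ is a bijection of $\fL$; then $\Phi_{T^{-1}}$ carries each such $T(A)_y$ back to $(A)_y \in \Lat(A)$, and since every $A$-invariant subspace $\eM$ is a sum $\sum (A)_{y_i}$ with $T^{-1}\eM = \sum T^{-1}(T(A)_{z_i})$ after writing $y_i = T z_i$, we get $T^{-1}\eM \in \Lat(A)$. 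Combining the two directions yields $\eM \in \Lat(A) \iff T\eM \in \Lat(A)$, i.e.\ $T \in \Col(A)$.

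The main obstacle is the bookkeeping in the converse direction: making precise that the hypothesis for $T$ transfers to $T^{-1}$. Once one notices that ``$T\eM \in \Lat(A)$ for all $\eM \in \Lat(A)$'' already follows from the cyclic hypothesis on $T$ alone, the symmetric statement for $T^{-1}$ can be extracted either by the dimension/finite-chain argument sketched above or simply by noting that applying the proved implication to $T$ gives $\Phi_T(\Lat(A)) \subseteq \Lat(A)$ and then checking that $\Phi_T$ is surjective onto $\Lat(A)$ by a dimension argument on each cycle, forcing $\Phi_{T^{-1}}(\Lat(A)) \subseteq \Lat(A)$ as well. Everything else is routine.
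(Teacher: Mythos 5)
Your forward direction and the core of your converse (write $\eM\in\Lat(A)$ as $(A)_{x_1}+\cdots+(A)_{x_k}$ for a spanning set, apply $T$, and use that $\Lat(A)$ is closed under joins) are exactly the paper's proof, and that part is correct. The divergence is your final paragraph, where you try to transfer the hypothesis to $T^{-1}$ so as to obtain the full biconditional $\eM\in\Lat(A)\iff T\eM\in\Lat(A)$ appearing in the definition of $\Col(A)$. You are right that the definition literally demands this and that the cyclic-subspace hypothesis directly yields only the implication $\eM\in\Lat(A)\Rightarrow T\eM\in\Lat(A)$; the paper's own proof stops at exactly that point and leaves the other half implicit. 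So your instinct to address it is sound, but none of the arguments you sketch for it is correct.

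Concretely: (i) the interval $[\{0\},(A)_x]$ is a finite chain only in the nilpotent case --- \Cref{lem60} is stated and proved for nilpotent $N$, and for a diagonalizable $A$ with distinct eigenvalues and a cyclic vector $x$ the interval $[\{0\},(A)_x]$ is all of $\Lat(A)$, a Boolean lattice, not a chain. (ii) Your ``cleanest route'' silently replaces $(A)_{y_i}=(A)_{Tz_i}$ by $T(A)_{z_i}$. In general one only has $(A)_{Tz}\subseteq T(A)_z$; the equality $T(A)_z=(A)_{Tz}$ is precisely the nontrivial content of \Cref{theo05}, proved later, only for nilpotents, and only under the hypothesis $T\in\Col(N)$ --- which is what you are trying to establish. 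With the inclusion alone your computation gives $T^{-1}\eM\subseteq\sum_i(A)_{z_i}$ and no invariance of $T^{-1}\eM$. (iii) ``Injective implies surjective by a dimension count'' fails because the set of $k$-dimensional invariant subspaces is in general infinite (already for $\bdJ_2\oplus\bdJ_2$ it contains a continuum of lines), so injectivity of $\Phi_T$ on it does not formally force surjectivity; one would need, e.g., that this set is a projective subvariety of the Grassmannian mapped into itself by the automorphism $\Phi_T$, so that the descending chain of images stabilizes. If your goal is to reproduce what the paper actually proves, delete the last paragraph; if your goal is to close the gap in the literal definition, you need an argument of the latter kind, or you must restrict to the situations where the lemma is applied (e.g.\ $T\in\bigl((A)'\bigr)^{-1}$, where $T^{-1}$ visibly satisfies the same cyclic-subspace hypothesis).
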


\begin{proof}
If $T\in \Col(A)$, then $T(A)_x\in \Lat(A)$ for every $x\in \eV$ since every cyclic subspace $(A)_x$ is invariant for $A$.
To prove the opposite implication, assume that $\eM\in \Lat(A)$. If $\eM=\{0\}$, then it is clear that $T\eM=\eM\in \Lat(A)$. Suppose therefore that $\eM\ne \{ 0\}$ and let $\{e_1,\ldots, e_m\}$ be a basis of $\eM$.
Since $\eM=(A)_{e_1}\vee \cdots \vee (A)_{e_m}$ and $T(A)_{e_i}\in \Lat(A)$ for each $1\leq i\leq m$ we conclude that $T\eM=T(A)_{e_1}\vee\cdots\vee T(A)_{e_m}\in \Lat(A)$.
\end{proof}

\begin{proposition} \label{prop08}
Every invertible linear transformation in $(A)'$ is in $\Col(A)$.
\end{proposition}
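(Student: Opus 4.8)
The plan is to invoke \Cref{lem20} and reduce the statement to showing that $S(A)_x \in \Lat(A)$ for every invertible $S \in (A)'$ and every $x \in \eV$. This is the natural strategy because $\Col(A)$ is defined via the condition $\eM \in \Lat(A) \iff S\eM \in \Lat(A)$, and \Cref{lem20} tells us it suffices to check the forward implication on cyclic subspaces only; moreover the assumption $SA = AS$ should make cyclic subspaces transform in a transparent way.

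First I would fix an invertible $S \in (A)'$ and an arbitrary $x \in \eV$, and observe that since $S$ commutes with $A$, it commutes with every polynomial $p(A)$, so that for any $p(z) \in \bC[z]$ we have $S p(A) x = p(A) S x$. Consequently
\begin{equation*}
S(A)_x = \{ S p(A) x;\ p(z) \in \bC[z] \} = \{ p(A) (Sx);\ p(z) \in \bC[z] \} = (A)_{Sx}.
\end{equation*}
Since $(A)_{Sx}$ is by definition the cyclic subspace of $A$ generated by the vector $Sx \in \eV$, it lies in $\Lat(A)$. Hence $S(A)_x \in \Lat(A)$ for every $x$, and since $S$ is invertible by hypothesis, \Cref{lem20} applies and yields $S \in \Col(A)$.

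There is no serious obstacle here: the argument is essentially a one-line computation once \Cref{lem20} is in hand. The only point that requires a word of care is that \Cref{lem20} needs $S$ to be invertible, which is exactly the hypothesis (an invertible transformation \emph{in} $(A)'$), so the reduction is legitimate. I would also note, if desired, that the same computation shows $S^{-1}(A)_y = (A)_{S^{-1}y} \in \Lat(A)$ for every $y$, so the equivalence $\eM \in \Lat(A) \iff S\eM \in \Lat(A)$ can even be verified directly without appealing to the lemma; but routing through \Cref{lem20} keeps the exposition shortest.
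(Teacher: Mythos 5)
Your argument is correct and coincides with the paper's own proof: both fix an invertible $S\in (A)'$, compute $S(A)_x=(A)_{Sx}\in\Lat(A)$ using that $S$ commutes with every $p(A)$, and then invoke \Cref{lem20}. Nothing further is needed.
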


\begin{proof}
Let $T\in (A)'$ be invertible. Since $T(A)_x=(A)_{Tx}\in \Lat(A)$ for every $x\in \eV$ it follows, by Lemma \ref{lem20},
that $T\in \Col(A)$.
\end{proof}

We continue with the study of cyclic subspaces of nilpotent linear transformations.
Let $N\in \pL(\eV)$ be a non-zero nilpotent linear transformation. The nil-index of $N$ is the smallest positive integer $n$ such that $N^{n-1}\neq 0$ and $N^{n}=0$.   Suppose that the nil-index $n$ of $N$ is at least $2$.  For a vector $0\ne x\in \eV$, let $0\leq k_x<n$ be the integer for which $N^{k_x}x\ne 0$ and $N^{k_x+1}x=0$. It is well known and easily seen that the vectors $x, Nx, \ldots, N^{k_x}x$ are linearly independent and so they form a basis of the cyclic subspace $(N)_x$. From $N^{k_x+1}x=0$ it follows that for each $0\leq j\leq k_x$ we have
$$(N)_{N^{k_x-j}x}=\bigvee\{ N^{k_x} x, N^{k_x-1}x,\ldots, N^{k_x-j}x\}.$$ In particular, we have
$(N)_{N^{k_x}x}=[N^{k_x} x]$ and $(N)_x=\bigvee\{ x, \ldots, N^{k_x} x\}$.
Since for each $j=0, 1, \ldots, k_x$ we have $\dim\bigl( (N)_{N^{k_x-j}x}\bigr)=j+1$,
the chain
\begin{equation} \label{eq64}
\{ 0\}=(N)_{N^{k_x+1} x}<(N)_{N^{k_x}x}<\cdots<(N)_{Nx}<(N)_x
\end{equation}
 is a chain of subspaces of $(N)_x$ of  the maximal possible length contained in the interval $[\{ 0\},(N)_x]$.
In the following lemma we prove that the interval $[\{ 0\},(N)_x]$ is equal to the chain \eqref{eq64} so that it is actually a cycle.

\begin{lemma} \label{lem60}
For every $x\in\eV$ the cyclic subspace $(N)_x$ is a cycle in $\Lat(N)$.
\end{lemma}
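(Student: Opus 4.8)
The plan is to show that the interval $[\{0\},(N)_x]$ consists of nothing beyond the subspaces already exhibited in the chain \eqref{eq64}; since those subspaces are pairwise comparable and finitely many (as $\dim(N)_x<\infty$), this is exactly the statement that $(N)_x$ is a cycle. If $x=0$ there is nothing to prove, so I would assume $x\neq 0$ and retain the notation $k=k_x$, so that $x,Nx,\dots,N^kx$ is a basis of $(N)_x$ and $N^{k+1}x=0$.

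The key device is a \emph{valuation} on $(N)_x$: for a nonzero $v=\sum_{i=0}^{k}c_iN^ix$ set $\mathrm{ord}(v)=\min\{i:c_i\neq 0\}$. I claim that if $\eP\in\Lat(N)$ with $\{0\}\neq\eP\subseteq(N)_x$, and $r$ is the minimum of $\mathrm{ord}(v)$ taken over all nonzero $v\in\eP$, then
\[
\eP=\bigvee\{N^rx,N^{r+1}x,\dots,N^kx\}=(N)_{N^rx},
\]
which is precisely the member of \eqref{eq64} of dimension $k-r+1$. Since $r\in\{0,1,\dots,k\}$, granting the claim shows that every nonzero element of $[\{0\},(N)_x]$ occurs in \eqref{eq64}, and hence that $[\{0\},(N)_x]$ is a chain.

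To prove the displayed equality I would fix $v\in\eP$ with $\mathrm{ord}(v)=r$. Because $\eP$ is $N$-invariant, the vectors $v,Nv,\dots,N^{k-r}v$ all lie in $\eP$; a short inspection using $N^{k+1}x=0$ and $c_r\neq 0$ shows that $N^iv$ has order exactly $r+i$ for $0\le i\le k-r$, so these $k-r+1$ vectors have pairwise distinct orders, are therefore linearly independent, and lie in the $(k-r+1)$-dimensional space $\bigvee\{N^rx,\dots,N^kx\}$; hence they span it, which gives $\bigvee\{N^rx,\dots,N^kx\}\subseteq\eP$. Conversely, the minimality of $r$ forces every nonzero vector of $\eP$ to have order at least $r$, hence to lie in $\bigvee\{N^rx,\dots,N^kx\}$, so $\eP\subseteq\bigvee\{N^rx,\dots,N^kx\}$ and equality follows.

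I do not expect any real obstacle: the whole proof is the familiar observation that a single nilpotent Jordan block (which is what $N$ induces on $(N)_x$) has exactly one invariant subspace in each dimension, made explicit via the valuation. The only place requiring a bit of care is checking that $N^iv$ indeed has order $r+i$ (and not larger) for $i\le k-r$, which is where the inequality $r\le k$ is used.
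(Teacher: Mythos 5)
Your proof is correct and rests on the same key idea as the paper's: the order (the smallest index $j$ with a nonzero coefficient) of a vector with respect to the basis $x, Nx, \ldots, N^{k_x}x$ determines which member of the chain \eqref{eq64} it generates. The paper reduces to singly generated invariant subspaces $(N)_y$ and obtains $N^jx\in(N)_y$ by inverting $\gamma_jI+\gamma_{j+1}N+\cdots$ inside $(N)$, whereas you treat an arbitrary invariant subspace directly via the linear independence of $v, Nv,\ldots, N^{k-r}v$; these are interchangeable executions of the same argument.
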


\begin{proof}
Since the case $x=0$ is trivial, we only need to consider the case when $x$ is non-zero. Due to the text preceding the lemma, it suffices to prove that every subspace in $(N)_x$ invariant under $N$ is an element of the chain \eqref{eq64}. Moreover, since every invariant subspace is built by cyclic subspaces,  it is enough to show that for every non-zero vector $y\in (N)_x$ the cyclic subspace $(N)_y$ is equal to one of the members of the chain \eqref{eq64}.

Let $y\in (N)_x$ be an arbitrary non-zero vector and write $y=\gamma_0 x+\gamma_1 Nx+\cdots+\gamma_{n_k} N^{k_x} x$ for some
some scalars $\gamma_0, \ldots, \gamma_{k_x}$. Since $y\neq 0$,  there exists the smallest index $j\geq 0$ such that $\gamma_j\ne 0$. From here it follows that
$y=(\gamma_j I+\gamma_{j+1}N+\cdots+ \gamma_{k_x} N^{k_x-j})N^j x\in (N)_{N^j x}$ which implies
$(N)_y\subseteq (N)_{N^j x}$. To prove the opposite inclusion, consider the linear transformation $q(N)=\gamma_j I+\gamma_{j+1}N+\cdots+ \gamma_{k_x} N^{k_x-j}$.
Since $\gamma_j\ne 0$, the transformation $q(N)$ is invertible in $(N)$, and so there exists $p(N)\in (N)$
such that $p(N)q(N)=I$. It follows that $p(N)y=N^j x$ and so $N^j x\in (N)_y$ giving also the opposite inclusion  $(N)_{N^j x}\subseteq (N)_y$.
\end{proof}

The following corollary immediately follows from \Cref{lem60}.

\begin{corollary} \label{lem63}
Let $x\in\eV$ be arbitrary. Suppose that $\eM_1, \eM_2\in \Lat(N)$ are such that $(N)_x=\eM_1\vee\eM_2$.
\begin{enumerate}
  \item [(i)] Then $\eM_1=(N)_x$ or $\eM_2=(N)_x$.
  \item [(ii)] If $(N)_x=\eM_1\oplus \eM_2$, then  $\eM_1=(N)_x$ and $\eM_2=\{ 0\}$ or $\eM_1=\{ 0\}$ and $\eM_2=(N)_x$.
\end{enumerate}
\end{corollary}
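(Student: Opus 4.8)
The plan is to deduce both statements directly from \Cref{lem60}, which asserts that the interval $[\{0\},(N)_x]$ is a finite chain in $\Lat(N)$; once that is in hand there is essentially nothing left to do but unwind the lattice operations.

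First I would note that $\eM_1$ and $\eM_2$ are elements of $\Lat(N)$ contained in $\eM_1\vee\eM_2=(N)_x$, so both lie in the interval $[\{0\},(N)_x]$. By \Cref{lem60} this interval is a chain, hence totally ordered, so $\eM_1$ and $\eM_2$ are comparable; after possibly interchanging them we may assume $\eM_1\subseteq\eM_2$. Then $(N)_x=\eM_1\vee\eM_2=\eM_1+\eM_2=\eM_2$, which is exactly assertion (i). (The degenerate case $x=0$ requires no separate treatment: then $(N)_x=\{0\}$ and the interval $[\{0\},\{0\}]$ is a trivial chain, forcing $\eM_1=\eM_2=\{0\}$.)

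For (ii), assume in addition that the sum $(N)_x=\eM_1\oplus\eM_2$ is direct, i.e. $\eM_1\cap\eM_2=\{0\}$. By part (i) one of the summands already equals $(N)_x$; say $\eM_2=(N)_x$, the other case being symmetric. Then $\eM_1\subseteq(N)_x=\eM_2$ gives $\eM_1=\eM_1\cap\eM_2=\{0\}$, so $\eM_1=\{0\}$ and $\eM_2=(N)_x$, as claimed.

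I do not expect any genuine obstacle: all of the substance sits in \Cref{lem60}, and the remainder is a routine argument with the chain property together with the identities $\eM_1\vee\eM_2=\eM_1+\eM_2$ and $\eM_1\wedge\eM_2=\eM_1\cap\eM_2$. The only points that merit a word of care are the symmetry reduction "up to interchanging $\eM_1$ and $\eM_2$" and the observation that comparability of the two subspaces is precisely what the chain hypothesis of \Cref{lem60} supplies.
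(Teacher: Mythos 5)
Your proof is correct and follows essentially the same route as the paper: both deduce comparability of $\eM_1$ and $\eM_2$ from the chain property of $[\{0\},(N)_x]$ established in \Cref{lem60}, conclude (i) from $\eM_1\vee\eM_2=(N)_x$, and derive (ii) from (i) via the directness of the sum. The extra details you supply (the $x=0$ case and the explicit argument for (ii)) are fine but not needed.
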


\begin{proof}
(i) Since $\eM_1, \eM_2\in [0,(N)_x]$, by \Cref{lem60} it follows that $\eM_1\subseteq \eM_2$ or $\eM_2\subseteq \eM_1$. While the former yields $(N)_x=\eM_2$, the latter yields $(N)_x=\eM_1$. The assertion (ii) follows from (i).
\end{proof}

\section{Reduction to the nilpotent case} \label{sec02}
\setcounter{theorem}{0}

Let $\eV$ and $\eW$ be isomorphic complex vector spaces and let $A\in \pL(\eV)$ and $B\in \pL(\eW)$ be similar linear transformations. Suppose that $B=SAS^{-1}$ where $S\in \pL(\eV,\eW)$ is invertible. Then $m_A=m_B$,  $(B)=S(A)S^{-1}$ and $(B)'=S(A)' S^{-1}$. It is well known that  $\eM\in \Lat(A)$ if and only if $S\eM\in \Lat(B)$, and similarly, $\eM\in \Lat (A)'$ if and only if $S\eM\in \Lat(B)'$. Since $S$ is invertible, the mapping $\Phi_S\colon \pL(\eV)\to \pL(\eW)$ given by $\Phi_S(\eM)=S\eM$ induces an isomorphism of lattices
of invariant subspaces $\Lat(A)$ and $\Lat(B)$, and similarly, of lattices of hyperinvariant subspaces $\Lat(A)'$ and $\Lat(B)'$. Therefore, the following result about groups of collineations of similar transformations should not be surprising.

\begin{lemma}
Groups of collineations satisfy $\Col(B)=S \Col(A) S^{-1}$.
\end{lemma}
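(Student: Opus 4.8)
The plan is to unwind the definition of $\Col$ on both sides and use the bijectivity of $\Phi_S$ on the lattice of all subspaces. Concretely, I want to show that $T \in \Col(B)$ if and only if $S^{-1} T S \in \Col(A)$, which is exactly the asserted equality of groups once we rewrite it as $\Col(B) = S\,\Col(A)\,S^{-1}$.

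First I would fix $T \in \pL(\eW)^{-1}$ and set $R = S^{-1} T S \in \pL(\eV)^{-1}$; note $R$ is invertible since $S$ and $T$ are. The key computation is that for any subspace $\eM$ of $\eV$ one has $S^{-1}\bigl(T(S\eM)\bigr) = (S^{-1}TS)\eM = R\eM$, equivalently $T(S\eM) = S(R\eM)$. Next I would invoke the observation already recorded in the paragraph preceding the lemma: since $B = SAS^{-1}$, the map $\Phi_S$ restricts to a lattice isomorphism $\Lat(A) \to \Lat(B)$; in particular, for a subspace $\eP$ of $\eW$ we have $\eP \in \Lat(B)$ if and only if $S^{-1}\eP \in \Lat(A)$, and for a subspace $\eM$ of $\eV$ we have $\eM \in \Lat(A)$ if and only if $S\eM \in \Lat(B)$. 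Using \Cref{lem20} I only need to test cyclic subspaces, though this is not even necessary here — the direct argument on all subspaces is just as short.

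Now the chain of equivalences: $T \in \Col(B)$ means that for every subspace $\eP$ of $\eW$, $\eP \in \Lat(B) \iff T\eP \in \Lat(B)$. Substituting $\eP = S\eM$ (which ranges over all subspaces of $\eW$ as $\eM$ ranges over all subspaces of $\eV$, since $S$ is invertible) and applying the lattice isomorphism, this becomes: for every subspace $\eM$ of $\eV$, $\eM \in \Lat(A) \iff S^{-1}(T S\eM) \in \Lat(A)$, i.e.\ $\eM \in \Lat(A) \iff R\eM \in \Lat(A)$, which is precisely the statement $R \in \Col(A)$. Hence $T \in \Col(B) \iff S^{-1}TS \in \Col(A) \iff T \in S\,\Col(A)\,S^{-1}$, giving $\Col(B) = S\,\Col(A)\,S^{-1}$.

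There is no real obstacle here: the proof is a routine transport-of-structure argument, and the only thing to be careful about is the bookkeeping of which side $S$ versus $S^{-1}$ sits on (since $\eV$ and $\eW$ are a priori different spaces, $S \in \pL(\eV,\eW)$), and the remark that the substitution $\eP = S\eM$ is a bijection on subspaces. I would present it as the three short steps above, possibly compressing to a single displayed chain of $\iff$'s.
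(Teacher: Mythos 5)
Your proof is correct and follows essentially the same route as the paper's: both arguments rest on the fact that $\Phi_S$ is a lattice isomorphism from $\Lat(A)$ onto $\Lat(B)$, so that the substitution $\eP = S\eM$ transports the defining condition of $\Col(B)$ to that of $\Col(A)$. The only cosmetic difference is that you package the two inclusions as a single chain of equivalences, whereas the paper proves one inclusion and appeals to symmetry for the other.
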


\begin{proof}
  Suppose that $T\in \Col(A)$. Since every subspace $\eK\in\Lat(B)$ is of the form $\eK=S\eM$ for some
$\eM\in \Lat(A)$, we have $STS^{-1} \eK=ST\eM\in \Lat(B)$. Hence, $STS^{-1}\in \Col(B)$.
A similar reasoning shows that $S^{-1}T'S\in\Col(A)$ for every $T'\in \Col(B)$ which proves $\Col(B)=S\Col(A)S^{-1}$.
\end{proof}

Now we briefly outline the primary decomposition of a linear transformation.
Let $\lambda_1, \ldots,\lambda_s$ be pairwise distinct eigenvalues of a linear transformation $A\in \pL(\eV)$. 
Then the minimal polynomial $m_A$ of $A$ is of the form $m_A(z)=(z-\lambda_1)^{n_1}\cdots (z-\lambda_s)^{n_s}$
where $n_1, \ldots, n_s\in \bN$ and $n_1+\cdots+n_s\leq d=\dim \eV$.
For every $j=1, \ldots, s$, let  $\eV_j=\eN\bigl((A-\lambda_j I)^{n_j}\bigr)$ and let
$A_j$ be the restriction of $A$ to  $\eV_j$.
Then $A_j=\lambda_j I_{j}+N_j$, where $I_{j}$ is the identity on $\eV_j$ and $N_j\in \pL(\eV_j)$ is a nilpotent linear transformation with the nil-index $n_j$.
It follows that the minimal polynomial of $A_j$ is $m_{A_j}(z)=(z-\lambda_j)^{n_j}$.
The {\em primary decomposition} of $A$ is
\begin{equation} \label{eq05}
\eV=\eV_1 \oplus\cdots \oplus \eV_s,\qquad A=A_1\oplus \cdots \oplus A_s,\qquad A_j=\lambda_j I_{j}+N_j\;\;(j=1,\ldots,s).
\end{equation}
It should be noted that $(A_j)=(N_j)$, $(A_j)'=(N_j)'$, $\Lat(A_j)=\Lat(N_j)$ and $\Lat(A_j)'=\Lat(N_j)'$ for every $j=1,\ldots, s$.

\begin{proposition} \label{prop01}
Let $A\in \pL(\eV)$ and let \eqref{eq05} be its primary decomposition. Then
\begin{enumerate}
\item[(i)] $(A)=(A_1)\oplus \cdots\oplus (A_s)=(N_1)\oplus \cdots\oplus (N_s)$;
\item[(ii)] $(A)'=(A_1)'\oplus \cdots\oplus (A_s)'=(N_1)'\oplus \cdots\oplus (N_s)'$;
\item[(iii)] $\Lat(A)=\Lat(A_1)\oplus\cdots\oplus \Lat(A_s)=\Lat(N_1)\oplus\cdots\oplus \Lat(N_s)$;
\item[(iv)] $\Lat(A)'=\Lat(A_1)'\oplus\cdots\oplus \Lat(A_s)'=\Lat(N_1)'\oplus\cdots\oplus \Lat(N_s)'$.
\end{enumerate}
\end{proposition}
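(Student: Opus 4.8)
The plan is to deduce Proposition \ref{prop01} from the primary decomposition \eqref{eq05} by analyzing how commuting with $A$ and invariance under $A$ interact with the direct sum structure. The key point, to be used throughout, is that the spectral projections $P_j\colon\eV\to\eV_j$ along the complementary summands are polynomials in $A$ (this is the standard consequence of the primary decomposition, via the Chinese Remainder Theorem / partial fractions on $m_A$), and hence every $T\in(A)'$ commutes with each $P_j$, so that each $\eV_j$ is $T$-invariant and $T$ decomposes as $T=T_1\oplus\cdots\oplus T_s$ with $T_j\in\pL(\eV_j)$.

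First I would prove (i). Since each $(z-\lambda_j)^{n_j}$ is a power of $(z-\lambda_j)$, evaluating a polynomial at $A$ and restricting to $\eV_j$ gives $p(A)|_{\eV_j}=p(A_j)=p(\lambda_j I_j+N_j)$, which by Taylor expansion is a polynomial in $N_j$; conversely the $P_j$ being polynomials in $A$ lets one assemble any tuple $(q_1(N_1),\dots,q_s(N_s))$ as a single polynomial in $A$ using a Chinese Remainder argument on the pairwise coprime factors $(z-\lambda_j)^{n_j}$. This gives $(A)=(A_1)\oplus\cdots\oplus(A_s)$, and the equality $(A_j)=(N_j)$ noted in the text gives the second description. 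For (ii), the inclusion $(A_1)'\oplus\cdots\oplus(A_s)'\subseteq(A)'$ is immediate since a block-diagonal operator commuting with each block commutes with $A=A_1\oplus\cdots\oplus A_s$; the reverse inclusion is exactly the observation above that any $T\in(A)'$ commutes with the polynomials $P_j$, hence is block-diagonal with blocks $T_j\in(A_j)'$. Again $(A_j)'=(N_j)'$ finishes it.

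For (iii), the containment $\Lat(A_1)\oplus\cdots\oplus\Lat(A_s)\subseteq\Lat(A)$ is clear: a sum $\eM_1\oplus\cdots\oplus\eM_s$ with $\eM_j\in\Lat(A_j)$ is carried into itself by $A$. The reverse direction is the one genuinely requiring the primary decomposition: if $\eM\in\Lat(A)$, then since $P_j\in(A)$ maps $\eM$ into $\eM$, we get $\eM=P_1\eM\oplus\cdots\oplus P_s\eM$, and each $P_j\eM=\eM\cap\eV_j$ is invariant under $A_j=A|_{\eV_j}$; so $\eM\in\Lat(A_1)\oplus\cdots\oplus\Lat(A_s)$. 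One should be a little careful to verify $P_j\eM=\eM\cap\eV_j$ and that the direct sum decomposition of $\eM$ is the internal one, but this is routine once one knows the $P_j$ restrict to $\eM$. Finally (iv) follows from (iii) applied to the commutant: having established in (ii) that $(A)'=(A_1)'\oplus\cdots\oplus(A_s)'$ is itself block-diagonal with respect to the same decomposition $\eV=\eV_1\oplus\cdots\oplus\eV_s$, a subspace is invariant under $(A)'$ iff it is invariant under each block algebra $(A_j)'$, which by the same projection argument (the spectral projections lie in the center of $(A)'$, indeed in $(A)\subseteq(A)'$) forces $\eM=\bigoplus_j(\eM\cap\eV_j)$ with $\eM\cap\eV_j\in\Lat(A_j)'$. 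As before $\Lat(A_j)'=\Lat(N_j)'$.

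The main obstacle, such as it is, is purely the bookkeeping of the internal direct sum: one must confirm that for $\eM\in\Lat(A)$ the natural map $\bigoplus_j(\eM\cap\eV_j)\to\eM$ is an isomorphism, which is where the fact that the $P_j$ are polynomials in $A$ (hence leave $\eM$ invariant and sum to the identity on $\eM$) does all the work. Everything else is a direct translation of the already-stated identities $(A_j)=(N_j)$, $(A_j)'=(N_j)'$, $\Lat(A_j)=\Lat(N_j)$, $\Lat(A_j)'=\Lat(N_j)'$.
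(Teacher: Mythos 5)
Your proof is correct, but it is organized rather differently from the paper's. The paper only proves part (i), and does so by reducing to two summands and writing down an explicit Bezout identity: from $\gcd(m_B,m_C)=1$ it takes $r_1m_B+r_2m_C=1$ and verifies directly that $p=p_1r_2m_C+p_2r_1m_B$ satisfies $p(B\oplus C)=p_1(B)\oplus p_2(C)$; for (ii), (iii) and (iv) it simply cites Brickman--Fillmore and Fillmore--Herrero--Longstaff. You instead give a unified, self-contained treatment of all four parts built on the single observation that the spectral projections $P_j$ are polynomials in $A$. The underlying algebra for (i) is the same (coprimality of the factors $(z-\lambda_j)^{n_j}$ plus a Bezout/CRT identity), just packaged through the $P_j$ rather than through an explicit formula for $p$. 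What your route buys is that (ii)--(iv) come essentially for free from the same device: $T\in(A)'$ commutes with the $P_j$ and hence is block-diagonal, an $A$-invariant (or hyperinvariant) subspace is $P_j$-invariant and hence splits as $\bigoplus_j(\eM\cap\eV_j)$, and the identification $P_j\eM=\eM\cap\eV_j$ handles the internal-direct-sum bookkeeping you rightly flag. The one place to be slightly careful, which you handle correctly, is in (iv): you need $P_j\in(A)\subseteq(A)'$ so that hyperinvariant subspaces are $P_j$-invariant, and you need part (ii) to know that each $(A_j)'$ embeds block-diagonally into $(A)'$ so that invariance of $\eM\cap\eV_j$ under $(A_j)'$ is both necessary and sufficient. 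No gaps; your version is simply more self-contained than what the authors chose to include.
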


For the proofs of (ii), (iii) and (iv) we refer the reader to \cite[Lemma 1]{BF} and \cite[Theorem 2]{FHL}. We include the proof of (i) for reader's convenience  since we cannot find an exact reference.

\begin{proof}
The inclusion $(A)\subseteq (A_1)\oplus \cdots\oplus (A_s)$ is obvious. For the opposite inclusion note first that one only needs to consider the case $s=2$ as the general case follows by an easy induction argument on the number of primary factors. Choose linear transformations $B$ and $C$ which satisfy $\gcd\bigl(m_{B},m_{C}\bigr)=1$. Then there exist polynomials $r_1$ and $r_2$ such that $r_1m_{B}+r_2m_{C}=1$.
Pick an arbitrary $p_1(B)\oplus p_2(C)\in (B)\oplus (C)$ and consider the polynomial
$p=p_1r_2m_C+p_2r_1m_B$.
Then
\begin{align*}
p(B\oplus C)&=p(B)\oplus p(C)=\bigl(p_1(B)r_2(B)m_C(B)\bigr)\oplus \bigl(p_2(C)r_1(C)m_B(C)\bigr)\\
&=p_1(B)\oplus p_2(C)
\end{align*}
since $r_2(B)m_C(B)=I-r_1(B)m_B(B)=I$ and $r_1(C)m_B(C)=I-r_2(C)m_C(C)=I$.
This proves that $(B\oplus C)=(B)\oplus (C)$.
\end{proof}

Now we finally turn our attention to the reduction of the problem of characterizing $\Col(A)$ to the problem of characterizing the group of collineations of a given nilpotent linear transformation. The first step is to prove (see \Cref{prop04}) that a collineation $T\in \Col(A)$ permutes the subspaces from the primary decomposition of $A$. We start with a preparatory lemma involving cyclic subspaces.

\begin{lemma} \label{lem64}
Let $A\in \pL(\eV)$ and let \eqref{eq05} be its primary decomposition.
If $T\in \Col(A)$, then for every $0\ne x\in \eV_j$ there exists an index $i=\pi(x,j)$ such that $T(A)_x\leq \eV_i$.
\end{lemma}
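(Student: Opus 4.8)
The plan is to recognise ``$(A)_x$ is a cycle'' as a property internal to the lattice $\Lat(A)$, transport it along the automorphism $\Phi_T$, and then extract the conclusion from the primary decomposition of $\Lat(A)$ in \Cref{prop01}(iii).

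First I would fix $0\ne x\in\eV_j$ and observe that, since $\eV_j$ is $A$-invariant with $A|_{\eV_j}=A_j=\lambda_j I_j+N_j$, one has $(A)_x=(A_j)_x=(N_j)_x\subseteq\eV_j$. Every $A$-invariant subspace contained in $(A)_x$ lies in $\eV_j$ and is therefore $A_j$-invariant, so the interval $[\{0\},(A)_x]$ computed in $\Lat(A)$ coincides with the interval $[\{0\},(N_j)_x]$ computed in $\Lat(N_j)$. By \Cref{lem60} this interval is a finite chain; hence $(A)_x$ is a cycle in $\Lat(A)$.

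Next I would note that, because $T$ is invertible, $T\eP\subseteq T\eM\iff\eP\subseteq\eM$, and because $T\in\Col(A)$ the map $\eP\mapsto T\eP$ sends $\Lat(A)$ bijectively onto itself and respects $\vee$ and $\wedge$; thus $\Phi_T$ restricts to a lattice automorphism of $\Lat(A)$ fixing the least element $\{0\}$. Consequently $\Phi_T$ carries the finite chain $[\{0\},(A)_x]$ onto $[\{0\},T(A)_x]$, so the latter is again a finite chain and $T(A)_x$ is a cycle in $\Lat(A)$.

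Finally, by \Cref{prop01}(iii) write $T(A)_x=\eK_1\oplus\cdots\oplus\eK_s$ with $\eK_i=T(A)_x\cap\eV_i\in\Lat(A_i)$. If two distinct summands $\eK_{i_1},\eK_{i_2}$ were both nonzero, then, being nonzero subspaces of $\eV_{i_1}$ and $\eV_{i_2}$ with $\eV_{i_1}\cap\eV_{i_2}=\{0\}$, they would be incomparable members of $[\{0\},T(A)_x]$, contradicting that this interval is a chain; since $Tx\ne 0$ forces $T(A)_x\ne\{0\}$, exactly one $\eK_i$ is nonzero, and then $T(A)_x=\eK_i\subseteq\eV_i$. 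Setting $\pi(x,j):=i$ completes the argument. I expect no serious obstacle: the one point needing care is the identification in the first step of the interval below $(A)_x$ inside $\Lat(A)$ with the interval below $(N_j)_x$ inside $\Lat(N_j)$, which is exactly what makes \Cref{lem60} applicable; the remainder is a routine transfer of the ``finite chain'' property across $\Phi_T$ together with a pigeonhole over the primary components.
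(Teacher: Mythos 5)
Your argument is correct and rests on the same two ingredients as the paper's proof: the cycle property of cyclic subspaces from \Cref{lem60} (the paper invokes it via \Cref{lem63}) and the decomposition $\Lat(A)=\Lat(N_1)\oplus\cdots\oplus\Lat(N_s)$ from \Cref{prop01}(iii). The only difference is one of direction — you push the chain property forward onto $T(A)_x$ and rule out two nonzero primary components by incomparability, whereas the paper pulls the components of $T(A)_x$ back through $T^{-1}$ and applies \Cref{lem63} to $(\widehat{N}_j)_x$ — which is a mirror image of the same argument.
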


\begin{proof}
Pick a non-zero vector $x\in \eV_j$ and denote $\widehat{N}_j=0\oplus\cdots\oplus N_j\oplus \cdots\oplus 0$. It is clear that $\widehat{N}_j\in \pL(\eV)$ is nilpotent and $(A)_x=(\widehat{N}_j)_x\subseteq \eV_j$. Since $T(\widehat{N}_j)_x
\in \Lat(A)$ and $\Lat(A)=\Lat(N_1)\oplus\cdots\oplus\Lat(N_s)$ by \Cref{prop01}, there exist subspaces $\eP_1\in \Lat(N_1)$, \ldots, $\eP_s\in \Lat(N_s)$ such that $T(\widehat{N}_j)_x=\eP_1\oplus \cdots\oplus \eP_s=\eP_1\vee \cdots\vee \eP_s$, where
in this last part each $\eP_l$ ($l=1,\ldots,s$) is considered as a subspace of $\eV$ in a natural way. Since $T^{-1}\in \Col(A)$
we have $(\widehat{N}_j)_x=T^{-1}\eP_1\vee \cdots\vee T^{-1}\eP_s$, where $T^{-1}\eP_l\in \Lat(A)$ for every $l=1,\ldots, s$. It follows by Lemma \ref{lem63} that
there is an index $i=\pi(x,j)$ such that $(\widehat{N}_j)_x=T^{-1}\eP_i$. We conclude that
$T(A)_x=T(\widehat{N}_j)_x=\eP_i\leq \eV_i$.
\end{proof}

\begin{proposition} \label{prop04}
Let $A\in \pL(\eV)$ and let \eqref{eq05} be its primary decomposition.
If $T\in \Col(A)$, then there exists a permutation $\pi$ of $\{ 1, \ldots, s\}$ such that $T\eV_j=\eV_{\pi(j)}$ for every $j=1, \ldots, s$.
\end{proposition}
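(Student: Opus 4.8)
The plan is to exploit \Cref{lem64} to build the permutation $\pi$ and then show it is well-defined and gives equality of subspaces rather than mere inclusions. First I would fix $T\in\Col(A)$ and, for each $j\in\{1,\ldots,s\}$, choose a generator $x_j\in\eV_j$ of a cyclic subspace of maximal dimension $n_j$ (such an $x_j$ exists because $N_j$ has nil-index $n_j$, so there is a Jordan block of size $n_j$). By \Cref{lem64}, $T(A)_{x_j}\le\eV_{\pi(j)}$ for some index $\pi(j)$. The key observation is that $T$ restricted to $(A)_{x_j}$ is an injective linear map into $\eV_{\pi(j)}$, and moreover $A_j$ acts on $(A)_{x_j}=(\widehat N_j)_{x_j}$ as a single Jordan block $\lambda_j\bdI_{n_j}+\bdJ_{n_j}$; conjugating this action over to $\eV_{\pi(j)}$ via $T$, the image $T(A)_{x_j}$ is an $n_j$-dimensional $A$-invariant subspace on which $A$ acts with single eigenvalue $\lambda_j$. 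But $A$ acts on $\eV_{\pi(j)}$ with the single eigenvalue $\lambda_{\pi(j)}$, so $\lambda_j=\lambda_{\pi(j)}$; since the $\lambda$'s are pairwise distinct this already forces some structural rigidity. I would then argue that $\pi$ does not depend on the choice of $x_j$: if $y\in\eV_j$ is any nonzero vector, then $(A)_{x_j}+(A)_y\subseteq\eV_j$ lies inside some cyclic subspace only after passing to a suitable generator, so instead I would use that $\eV_j$ is generated as a subspace by its cyclic subspaces and that $T\eV_j$ is then the join of the images $T(A)_{y}$; if two such images landed in different $\eV_i$'s, the join would meet two primary components nontrivially while still being a subspace of $\eV_j$ mapped injectively — I will need to rule this out, which is essentially the content of \Cref{lem64} applied cyclic-subspace by cyclic-subspace together with the constancy of the target index.

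More precisely, for the well-definedness I would proceed as follows. Suppose $0\ne x,y\in\eV_j$ with $T(A)_x\le\eV_{i}$ and $T(A)_y\le\eV_{i'}$. Consider the vector $x+y$ (or, if $(A)_x\cap(A)_y$ behaves badly, a generic vector in the plane spanned by $x$ and $y$); it lies in $\eV_j$, so $T(A)_{x+y}\le\eV_{i''}$ for some $i''$. On the other hand $(A)_{x+y}\subseteq(A)_x+(A)_y$, hence $T(A)_{x+y}\subseteq T(A)_x+T(A)_y\subseteq\eV_i+\eV_{i'}$, which is a direct sum when $i\ne i'$; since $T(A)_{x+y}$ is contained in a single primary component $\eV_{i''}$, we must have $i''\in\{i,i'\}$. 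Using that $T$ is bijective and applying the same reasoning with $T^{-1}$ (which also lies in $\Col(A)$ and maps $\eV_{i}$-cyclic subspaces into single primary components), I would show $x+y$ can be recovered from its image, forcing $i=i'$. Running this over all nonzero $x\in\eV_j$ shows that $T$ sends every cyclic subspace inside $\eV_j$ into one fixed $\eV_{\pi(j)}$, and since $\eV_j$ is the join of these cyclic subspaces, $T\eV_j\subseteq\eV_{\pi(j)}$.

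To upgrade the inclusions $T\eV_j\subseteq\eV_{\pi(j)}$ to equalities and to see that $\pi$ is a permutation, I would count dimensions: $\sum_{j=1}^s\dim\eV_j=\dim\eV=\sum_{j=1}^s\dim(T\eV_j)$, and since $T$ is injective $\dim(T\eV_j)=\dim\eV_j$. If $\pi$ were not injective, two distinct components $\eV_{j_1},\eV_{j_2}$ would map into the same $\eV_i$ with $T\eV_{j_1}\cap T\eV_{j_2}=T(\eV_{j_1}\cap\eV_{j_2})=\{0\}$, so $\dim\eV_i\ge\dim\eV_{j_1}+\dim\eV_{j_2}$; summing over the fibers of $\pi$ gives $\dim\eV=\sum_i\dim\eV_i\ge\sum_j\dim\eV_j=\dim\eV$, with equality forcing every fiber of $\pi$ to be a singleton, i.e.\ $\pi$ is a bijection, and simultaneously forcing $T\eV_j=\eV_{\pi(j)}$ for each $j$.

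I expect the main obstacle to be the well-definedness step, i.e.\ showing that the target index $\pi(j)$ genuinely does not depend on which nonzero vector of $\eV_j$ we pick; the subtlety is that $(A)_{x+y}$ need not equal $(A)_x+(A)_y$ and its dimension can drop, so the argument that $T(A)_{x+y}$ is squeezed between $\eV_i+\eV_{i'}$ on one side and a single $\eV_{i''}$ on the other has to be combined carefully with the injectivity of $T$ and the analogous statement for $T^{-1}$ (and, if needed, with a genericity argument choosing the scalar coefficients so that $(A)_{x+\alpha y}$ has the expected dimension). Everything else is bookkeeping with direct sums, \Cref{prop01}(iii), and \Cref{lem64}.
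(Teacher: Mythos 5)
Your overall strategy coincides with the paper's: use Lemma \ref{lem64} to send each cyclic subspace of $\eV_j$ into a single primary component, show the target index does not depend on the chosen nonzero vector, and then upgrade the resulting inclusions to a permutation with equalities. However, the crux — the well-definedness of $\pi(j)$ — is precisely the step you leave unresolved, so as written the proof has a genuine gap. You correctly reduce to the situation $T(A)_x\le\eV_i$, $T(A)_y\le\eV_{i'}$ with $i\ne i'$ and $T(A)_{x+y}\le\eV_{i''}$ with $i''\in\{i,i'\}$, but the appeal to ``recovering $x+y$ from its image'' via $T^{-1}$ and a possible genericity argument is not an argument. The missing step is in fact a one-liner from ingredients you already have: by linearity $T(x+y)=Tx+Ty$ with $Tx\in\eV_i$ and $Ty\in\eV_{i'}$, so if $i''=i$ then the $\eV_{i'}$-component of $T(x+y)$ in the direct sum $\eV_1\oplus\cdots\oplus\eV_s$ is $Ty$ and must vanish, contradicting injectivity of $T$ (symmetrically for $i''=i'$); no genericity and no use of $T^{-1}$ is needed here. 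The paper closes the same step differently: it passes to the kernel vectors $\widehat{N}_{j}^{k_x}x$ and $\widehat{N}_{j}^{k_y}y$, whose spans are one-dimensional invariant subspaces, and observes that the image of the span of their sum is a one-dimensional invariant subspace contained in the direct sum of two one-dimensional images lying in distinct primary components, which forces those images to be linearly dependent — again contradicting injectivity.

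Two further remarks. First, the eigenvalue digression in your opening paragraph is wrong: $T\in\Col(A)$ does not conjugate $A$ to itself, so you cannot conclude that $A$ acts on $T(A)_{x_j}$ with eigenvalue $\lambda_j$; it acts with eigenvalue $\lambda_{\pi(j)}$, and $\lambda_j=\lambda_{\pi(j)}$ is false in general — Proposition \ref{prop05} exhibits collineations realizing nontrivial permutations of the primary components. You do not use this claim, but it should be deleted. Second, your dimension-counting finish (to get surjectivity of $\pi$ and the equalities $T\eV_j=\eV_{\pi(j)}$) is correct and is a mild variant of the paper's, which instead applies the inclusion argument to $T^{-1}$ to obtain $\eV_j\subseteq T^{-1}\eV_{\pi(j)}\subseteq\eV_{\pi_1(j)}$ and squeezes; both work.
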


\begin{proof}
Let $j\in \{ 1,\ldots, s\}$ and denote $\widehat{N}_j=0\oplus\cdots \oplus N_j \oplus \cdots\oplus 0$. Pick linearly independent vectors $x$ and $y\in \eV_j$. By Lemma \ref{lem64} there exist indices
$\pi(x,j)$ and $\pi(y,j)$ such that $T(A)_x\leq \eV_{\pi(x,j)}$ and $T(A)_y\leq \eV_{\pi(y,j)}$. Suppose, towards a
contradiction, that $\pi(x,j)\ne \pi(y,j)$. Then $\eV_{\pi(x,j)}\cap \eV_{\pi(y,j)}=\{ 0\}$, and therefore,
$T\widehat{N}_{j}^{m}x$ and $T\widehat{N}_{j}^{n}y$ are linearly independent for all indices $0\leq m\leq k_x$,
$0\leq n\leq k_y$. It follows that $\widehat{N}_{j}^{m}x$ and $\widehat{N}_{j}^{n}y$ are linearly independent.
Since $[\widehat{N}_{j}^{k_x}x]$ and $[\widehat{N}_{j}^{k_y}y]$ are one-dimensional subspaces in $\Lat(A)$,
$T[\widehat{N}_{j}^{k_x}x]$ and $T[\widehat{N}_{j}^{k_y}y]$ are one-dimensional subspaces in $\Lat(A)$ as well. Consider the non-zero vector $\widehat{N}_{j}^{k_x}x+\widehat{N}_{j}^{k_y}y\in \eV_j$ and note
that  $\widehat{N}_{j}(\widehat{N}_{j}^{k_x}x+\widehat{N}_{j}^{k_y}y)=0$. Then $[\widehat{N}_{j}^{k_x}x+\widehat{N}_{j}^{k_y}y]$ is an one-dimensional subspace in $\Lat(A)$ so that
$T[\widehat{N}_{j}^{k_x}x+\widehat{N}_{j}^{k_y}y]\in\Lat(A)$ is one-dimensional as well. Hence, there exists a vector $0\ne z\in \eV$
such that $T[\widehat{N}_{j}^{k_x}x+\widehat{N}_{j}^{k_y}y]=[z]$ and so $T\widehat{N}_{j}^{k_x}x+T\widehat{N}_{j}^{k_y}y=\lambda z$ for some number $\lambda\ne 0$. Thus, $[z]\leq T[\widehat{N}_{j}^{k_x}x]\oplus T[\widehat{N}_{j}^{k_y}y]$, and since $\Lat(A)=\Lat(N_1)\oplus\cdots\oplus\Lat(N_s)$ and $\dim[z]=1$, by \Cref{prop01}, it follows that
 either $[z]=T[\widehat{N}_{j}^{k_x}x]$ or $[z]=T[\widehat{N}_{j}^{k_y}y]$. If the former holds, then there exists a number $\mu\ne 0$ such that $T\widehat{N}_{j}^{k_x}x=\mu z$. From the identity
$T\widehat{N}_{j}^{k_x}x+T\widehat{N}_{j}^{k_y}y=\lambda z$ we conclude $T\widehat{N}_{j}^{k_y}y=(\lambda-\mu)z$ from where it follows that
$T\widehat{N}_{j}^{k_x}x$ and $T\widehat{N}_{j}^{k_y}y$ are linearly dependent which is a contradiction.
It is clear that we also get a contradiction if the latter holds.
We conclude that $\pi(x,j)=\pi(y,j)$ which means that there exists an index $\pi(j)\in \{ 1,\ldots, s\}$ such that $T\eV_j\subseteq \eV_{\pi(j)}$.

We claim that $T\eV_j=\eV_{\pi(j)}$. To this end, note first that a similar argument as above shows that there exists $\pi_1(j)$ such that $T^{-1}\eV_{\pi(j)}\subseteq \eV_{\pi_1(j)}$. Then
$\eV_j\subseteq T^{-1}\eV_{\pi(j)}\subseteq \eV_{\pi_1(j)}$ for each $1\leq j\leq s$  implies $\eV_j=\eV_{\pi_1(j)}=T^{-1}\eV_{\pi(j)}$ proving the claim.

To finish the proof note that invertibility of $T$ implies that $\pi\colon j\mapsto \pi(j)$ is a permutation of $\{1,\ldots, s\}$.
\end{proof}

In \Cref{prop04} we have observed that every $T\in \Col(A)$ permutes the subspaces from the primary decomposition of $A$. In \Cref{prop05} we consider the question for which subspaces $\eV_j$ and $\eV_k$ from the primary decomposition of $A$ there exists $T\in \Col(A)$ such that $T\eV_k=\eV_j$. The answer is connected to the lattice structure of $\Lat(A_k)$ and $\Lat(A_j)$.

\begin{proposition} \label{prop05}
Let $A\in \pL(\eV)$ and let \eqref{eq05} be its primary decomposition. Let $\pi$ be a permutation of $\{ 1, \ldots, s\}$.
For each index $k\in \{1, \ldots, s\}$ the following assertions are equivalent.
\begin{enumerate}
  \item[(i)] There exists $T \in \Col(A)$ such that $T\eV_k=\eV_{\pi(k)}$.
  \item[(ii)] Lattices $\Lat(A_k)$ and $\Lat(A_{\pi(k)})$ are isomorphic.
  \item[(iii)] Operators $N_k$ and $N_{\pi(k)}$ are similar.
\end{enumerate}
\end{proposition}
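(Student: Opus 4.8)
The cycle of implications I would establish is (i)$\Rightarrow$(ii)$\Rightarrow$(iii)$\Rightarrow$(i). The implication (i)$\Rightarrow$(ii) is essentially immediate: if $T\in\Col(A)$ satisfies $T\eV_k=\eV_{\pi(k)}$, then since $\Col(A)$ acts on $\Lat(A)$ by lattice automorphisms and, by \Cref{prop01}, $\Lat(A)=\Lat(N_1)\oplus\cdots\oplus\Lat(N_s)$, the restriction of $\Phi_T$ to the interval $[\{0\},\eV_k]$ is a lattice isomorphism onto $[\{0\},\eV_{\pi(k)}]$; but these intervals are precisely $\Lat(A_k)=\Lat(N_k)$ and $\Lat(A_{\pi(k)})=\Lat(N_{\pi(k)})$. (One should check that $T$ restricted to $\eV_k$ is a genuine linear isomorphism onto $\eV_{\pi(k)}$, which follows from invertibility of $T$ and \Cref{prop04}, and that it carries invariant subspaces to invariant subspaces since it carries the whole of $\Lat(A_k)$, viewed inside $\Lat(A)$, onto $\Lat(A_{\pi(k)})$.)

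The implication (iii)$\Rightarrow$(i) is the constructive half. If $N_k$ and $N_{\pi(k)}$ are similar, choose an invertible $S_0\colon\eV_k\to\eV_{\pi(k)}$ with $S_0 N_k=N_{\pi(k)}S_0$; if moreover $\pi(\pi(k))=k$ one can arrange the inverse intertwiner on the block $\eV_{\pi(k)}$ simultaneously. In general I would decompose the permutation $\pi$ into disjoint cycles and, on each cyclic orbit $\{k,\pi(k),\pi^2(k),\dots\}$, string together intertwining isomorphisms so as to obtain an invertible $T\in\pL(\eV)$ permuting the $\eV_j$ blockwise and intertwining the corresponding nilpotent parts. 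Such a $T$ then satisfies $T\widehat N_j=\widehat N_{\pi(j)}T$ for the ``inflated'' nilpotents, hence $T(A)_x=T(\widehat N_j)_x=(\widehat N_{\pi(j)})_{Tx}\in\Lat(A)$ for every $x\in\eV_j$, and by linearity and \Cref{lem20} we get $T\in\Col(A)$, with $T\eV_k=\eV_{\pi(k)}$ by construction. The equivalence (ii)$\Leftrightarrow$(iii) is a known fact about nilpotent transformations: two nilpotent operators on finite-dimensional spaces have isomorphic invariant-subspace lattices if and only if they are similar, i.e.\ have the same Jordan type; I would cite this (or sketch it via the partition of the dimension recorded by the lengths of maximal chains through the cyclic subspaces, using \Cref{lem60}) rather than reprove it in full.

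The main obstacle I anticipate is the direction (ii)$\Rightarrow$(iii): reconstructing the Jordan type of a nilpotent $N$ purely from the abstract lattice $\Lat(N)$. The height of the lattice gives the nil-index $n$ (the length of the longest chain, which by \Cref{lem60} and \eqref{eq64} equals $\dim(N)_x$ for a vector of maximal order), but recovering the full partition requires a more delicate lattice invariant — for instance, counting atoms and their join-behaviour, or analyzing the sublattice generated by cycles of a given length — and making this rigorous is where the real work lies. An alternative, if one wants to keep the section self-contained, is to route (ii)$\Rightarrow$(iii) through the hyperinvariant lattice $\Lat(N)'$ or through dimension counts of $\eN(N^j)$, each of which is an order-theoretic datum of $\Lat(N)$; but in a paper at this level I would simply invoke the classical characterization and move on.
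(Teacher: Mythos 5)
Your overall architecture coincides with the paper's: (i)$\Rightarrow$(ii) by restricting $T$ to $\eV_k$ and observing that the interval $[\{0\},\eV_k]$ of $\Lat(A)$ is $\Lat(A_k)$; (ii)$\Leftrightarrow$(iii) by citing the classical fact that nilpotent transformations have isomorphic invariant-subspace lattices if and only if they are similar (the paper invokes \cite[Corollary 2.3.1]{MM} for precisely this, so your decision not to reprove it is the intended one); and (iii)$\Rightarrow$(i) by a blockwise construction verified through \Cref{lem20} and the splitting of $\Lat(A)$ from \Cref{prop01}.

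The one step that would fail as written is your general construction for (iii)$\Rightarrow$(i). You propose to decompose $\pi$ into disjoint cycles and string intertwining isomorphisms along each orbit $\{k,\pi(k),\pi^2(k),\ldots\}$ so that the resulting $T$ permutes \emph{all} the blocks according to $\pi$. But hypothesis (iii) only asserts that $N_k$ and $N_{\pi(k)}$ are similar for the one fixed index $k$; nothing forces $N_{\pi(k)}$ to be similar to $N_{\pi^2(k)}$, nor gives any similarity on the other orbits, so the intertwiners your construction needs may simply not exist. Fortunately assertion (i) only demands $T\eV_k=\eV_{\pi(k)}$ for that single $k$, so the special case you mention parenthetically is all that is ever required: choose $S_0\colon\eV_k\to\eV_{\pi(k)}$ with $S_0N_k=N_{\pi(k)}S_0$, place $S_0$ in position $(\pi(k),k)$ of the operator matrix, $S_0^{-1}$ in position $(k,\pi(k))$, the identity in the remaining diagonal positions and zero elsewhere. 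This transposition-type $T$ is exactly the paper's choice; its membership in $\Col(A)$ follows, as you indicate, from $\Lat(A)=\Lat(N_1)\oplus\cdots\oplus\Lat(N_s)$ together with the intertwining relations (or via \Cref{lem20} and the splitting $(A)_x=(A_1)_{x_1}\oplus\cdots\oplus(A_s)_{x_s}$ furnished by \Cref{prop01}(i)). With that repair your argument is correct and essentially identical to the paper's.
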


\begin{proof}
The equivalence between (ii) and (iii) follows from \cite[Corollary 2.3.1]{MM}.

To prove that (i) implies (ii), assume that $T\in \Col(A)$ satisfies $T\eV_k=\eV_{\pi(k)}$ and let $S\colon \eV_k\to \eV_{\pi(k)}$  be the
restriction of $T$ to $\eV_k$. 
It is clear that $S$ is an invertible linear transformation such that $S\eM\in \Lat(A_{\pi(k)})$ for every $\eM\in \Lat(A_k)$.  Since it also holds that for each $\eN\in \Lat(A_{\pi(k)})$ we have that $S^{-1}\eN\in \Lat(A_k)$, we conclude that
$\Lat(A_k)$ and $\Lat(A_{\pi(k)})$ are isomorphic.

To prove the opposite implication, suppose that $\Lat(A_k)$ and $\Lat(A_{\pi(k)})$ are isomorphic. By (iii) there exists an invertible linear bijection $S\colon \eV_k \to \eV_{\pi(k)}$ such that $SN_k=N_{\pi(k)}S$. It follows that $\eM\mapsto S\eM$ is an isomorphism of
lattices $\Lat(A_k)$ and $\Lat(A_{\pi(k)})$. Let $T\in \pL(\eV)$ be such linear transformation that the entries of its operator matrix
$\left[ T_{ij}\right]_{i,j=1}^{s}$ with respect to the decomposition $\eV=\eV_1\oplus \cdots\oplus \eV_s$ satisfy
$T_{\pi(k)k}=S$, $T_{k\pi(k)}=S^{-1}$, $T_{jj}=I_j$ for $j\notin\{k, \pi(k)\}$ and $T_{ij}=0$ otherwise. Then $T$ is invertible and $T\eV_k=\eV_{\pi(k)}$. To conclude that $T\in \Col(A)$ one needs to apply the splitting property of invariant subspaces (see \Cref{prop01}(i)) and the equality $SN_k=N_{\pi(k)}S$.
\end{proof}

Let $A\in \pL(\eV)$ and let \eqref{eq05} be its primary decomposition. We partition the set of indices
$\{ 1, \ldots, s\}$  into $1\leq t\leq s$ pairwise disjoint subsets as follows: indices $i,j$ are in the same set if and only
if $N_i$ and $N_j$ are similar. There is no loss of generality if we assume that our partition of $\{ 1,\ldots, s\}$ consists of subsets $\{ s_{l-1}+1, \ldots, s_{l}\}$
$(l=1,\ldots, t)$ where $0=s_0<s_1<\cdots<s_t=s$. Denote
\begin{equation} \label{eq78}
\eW_l=\eV_{s_{l-1}+1}\oplus\cdots\oplus \eV_{s_{l}}\quad \text{and}\quad
B_l=A_{s_{l-1}+1}\oplus\cdots\oplus A_{s_{l}}\qquad (l=1,\ldots, t).
\end{equation}
Hence, $\eV=\eW_1\oplus\cdots\oplus \eW_t$ and $A=B_1\oplus\cdots\oplus B_t$. Since $\sigma(B_i)\cap \sigma(B_j)=\emptyset$ for $i\neq j$ we have
$\Lat(A)=\Lat(B_1)\oplus\cdots\oplus \Lat(B_t)$.

\begin{proposition} \label{prop02}
Let $A\in \pL(\eV)$ and let \eqref{eq05} be its primary decomposition. If $B_1,\ldots,B_t$ are as in \eqref{eq78}, then
$\Col(A)=\Col(B_1)\oplus\cdots\oplus \Col(B_t)$.
\end{proposition}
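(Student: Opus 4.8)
The plan is to prove the two inclusions separately. The inclusion $\Col(B_1)\oplus\cdots\oplus\Col(B_t)\subseteq\Col(A)$ is essentially immediate, while the reverse inclusion rests on \Cref{prop04} and \Cref{prop05}.

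For the easy inclusion, I would take $T_l\in\Col(B_l)$ for $l=1,\dots,t$, set $T=T_1\oplus\cdots\oplus T_t$ (an invertible transformation on $\eV=\eW_1\oplus\cdots\oplus\eW_t$), and use the splitting $\Lat(A)=\Lat(B_1)\oplus\cdots\oplus\Lat(B_t)$ recorded just before the statement: any $\eM\in\Lat(A)$ is of the form $\eM_1\oplus\cdots\oplus\eM_t$ with $\eM_l\in\Lat(B_l)$, so $T\eM=\bigoplus_{l=1}^t T_l\eM_l\in\Lat(A)$; running the same argument with $T^{-1}=\bigoplus_l T_l^{-1}$ gives $T\in\Col(A)$.

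For the reverse inclusion, I start with $T\in\Col(A)$ and invoke \Cref{prop04} to obtain a permutation $\pi$ of $\{1,\dots,s\}$ with $T\eV_j=\eV_{\pi(j)}$ for all $j$. The key step is that, for each $k$, this very $T$ verifies assertion (i) of \Cref{prop05} (for the permutation $\pi$), so the equivalence (i)$\iff$(iii) there forces $N_k$ and $N_{\pi(k)}$ to be similar. Hence $\pi$ maps each similarity class $\{s_{l-1}+1,\dots,s_l\}$ into itself, and, being a bijection, it permutes each such class; consequently $T\eW_l=\eW_l$ for every $l$, so $T$ is block-diagonal with respect to $\eV=\eW_1\oplus\cdots\oplus\eW_t$, say $T=T_1\oplus\cdots\oplus T_t$ with each $T_l\in\pL(\eW_l)^{-1}$ the restriction of $T$ to $\eW_l$. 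It then remains to verify $T_l\in\Col(B_l)$: given $\eM_l\in\Lat(B_l)$, I regard it as a subspace of $\eV$ contained in the $A$-invariant subspace $\eW_l$ (on which $A$ acts as $B_l$), so $\eM_l\in\Lat(A)$ and hence $T_l\eM_l=T\eM_l\in\Lat(A)$; since $T_l\eM_l\subseteq\eW_l$ and $\Lat(A)=\Lat(B_1)\oplus\cdots\oplus\Lat(B_t)$, a subspace of $\eW_l$ lying in $\Lat(A)$ must lie in $\Lat(B_l)$ (its components in the other summands $\eW_m$ sit inside $\eW_l\cap\eW_m=\{0\}$), so $T_l\eM_l\in\Lat(B_l)$; the same reasoning applied to $T^{-1}$ gives $T_l^{-1}\eM_l\in\Lat(B_l)$, and therefore $T_l\in\Col(B_l)$.

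I expect the only genuine content to be the step that makes $\pi$ respect the similarity classes of the nilpotent parts, but this is exactly what \Cref{prop05} provides, so there is no real obstacle; the remaining steps are bookkeeping with the splitting properties from \Cref{prop01} and the definition \eqref{eq78} of the blocks $\eW_l$, $B_l$.
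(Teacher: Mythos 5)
Your proof is correct and takes essentially the same route as the paper: both inclusions rest on the splitting $\Lat(A)=\Lat(B_1)\oplus\cdots\oplus\Lat(B_t)$, and the block-diagonality of $T$ with respect to $\eV=\eW_1\oplus\cdots\oplus\eW_t$ is extracted from \Cref{prop04} and \Cref{prop05} exactly as in the paper (which phrases that step as a proof by contradiction, while you argue it directly via the permutation preserving similarity classes). The remaining bookkeeping, including your care with $T^{-1}$, matches the paper's argument.
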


\begin{proof}
Choose $T\in \Col(A)$ and let $\left[ T_{mn}\right]_{m,n=1}^{t}$ be its operator matrix with respect
to the decomposition $\eV=\eW_1\oplus\cdots\oplus \eW_t$. We will prove by way of contradiction that $T_{mn}=0$
for all indices $m\ne n$. If this were not the case, then there would exist indices $i\in \{ s_{m-1}+1,\ldots, s_m\}$, $j\in \{ s_{n-1}+1,\ldots, s_n\}$
and a vector $0\neq x\in \eV_j$ such that $0\ne Tx\in \eV_i$. By \Cref{prop04} we have $T(\eV_j)=\eV_i$, so that by \Cref{prop05} lattices
$\Lat(N_j)$ and $\Lat(N_i)$ are isomorphic. This contradiction shows that $T=T_{11}\oplus\cdots\oplus T_{tt}$. Since $T$ is invertible every $T_{jj}\in \pL(\eW_j)$ is invertible.
Let $\eM_j\in \Lat(B_j)$ $(j=1, \ldots,t)$ be
arbitrary. Then $\eM=\eM_1\oplus\cdots\oplus \eM_t\in \Lat(A)$ and therefore
$T\eM=T_{11}\eM_1\oplus\cdots\oplus T_{tt}\eM_t\in \Lat(A)$ which gives $T_{jj}\eM_j\in \Lat(B_j)$ for every
$j=1,\ldots, t$. It follows that $T_{jj}\in \Col(B_j)$.

To prove the opposite inclusion, suppose that $T_j\in \Col(B_j)$ for $j=1, \ldots t$. Let $T=T_1\oplus\cdots\oplus T_t$.
It is clear that $T\in \pL(\eV)$ is a bijection. If $\eM\in \Lat(A)$, then there exist $\eM_j\in \Lat(B_j)$
$(j=1, \ldots, t)$ such that $\eM=\eM_1\oplus\cdots\oplus\eM_t$. It follows that
$T\eM=T_1\eM_1\oplus\cdots\oplus T_t\eM_t\in \Lat(A)$. Hence, $T\in \Col(A)$.
\end{proof}

Now we consider a particular linear transformation $B$ of the form $B_l$ from \eqref{eq78}. Thus, let
\begin{equation} \label{eq79}
\eW=\eV_1\oplus \cdots \oplus \eV_r\quad \text{and}\quad B=A_1\oplus\cdots\oplus A_r,
\end{equation}
where $A_j=\lambda_j I_j+N_j$ ($j=1,\ldots, r$) for some distinct numbers $\lambda_1,\ldots,\lambda_r$ and
pairwise similar nilpotents $N_1,\ldots, N_r$. By \Cref{prop01} we have $\Lat(B)=\Lat(N_1)\oplus\cdots\oplus\Lat(N_r)$ and
lattices $\Lat(N_j)$ ($j=1,\ldots, r$) are pairwise isomorphic.
Let $\eV_0$ be any vector space isomorphic to the vector space $\eV_j$ $(j=1,\ldots,r)$ and let $N\in \pL(\eV_0)$ be such that $N$ is similar to
$N_j$ ($j=1,\ldots, r$). Let $S_j\colon \eV_0\to \eV_j$ be an invertible linear transformation such that
$N_j=S_j N S_{j}^{-1}$.

\begin{proposition} \label{prop06}
A linear transformation $T\in \pL(\eW)$ is in $\Col(B)$ if and only if there exists a permutation $\pi$ of $\{ 1,\ldots, r\}$ and collineations $T_1,\ldots, T_r\in \Col(N)$ such that the operator matrix of $T$ with respect to the decomposition
$\eW=\eV_1\oplus \cdots \oplus \eV_r$ is of the form $\left[ T_{ij}\right]_{i,j=1}^{r}$, where
$T_{\pi(k) k}=S_{\pi(k)}T_k S_{k}^{-1}$ for $k=1, \ldots, r$, and $T_{ij}=0$ if $(i,j)\ne (\pi(k),k)$ for every $k$.
\end{proposition}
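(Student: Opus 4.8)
The plan is to prove the two implications separately, and in each to reduce everything to three facts established earlier: since $\lambda_1,\ldots,\lambda_r$ are distinct, \eqref{eq79} is the primary decomposition of $B$, so by \Cref{prop04} every collineation of $B$ permutes the primary subspaces $\eV_1,\ldots,\eV_r$; by \Cref{prop01}(iii) every $\eM\in\Lat(B)$ splits as $\eM=\eM_1\oplus\cdots\oplus\eM_r$ with $\eM_j\in\Lat(N_j)$; and, because $N_j=S_jNS_j^{-1}$, the map $\eL\mapsto S_j\eL$ is a lattice isomorphism $\Lat(N)\to\Lat(N_j)$ with inverse $\eN\mapsto S_j^{-1}\eN$. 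I will also use that $\Col(N)$ and $\Col(B)$ are groups, so that membership in either can be verified by checking that both the transformation and its inverse map the relevant lattice into itself; and the elementary remark that $\{0\}\in\Lat(N_j)$, which allows one to realize a single ``diagonal'' summand as a block of a genuine element of $\Lat(B)$.

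For the forward implication, assume $T\in\Col(B)$. By \Cref{prop04} there is a permutation $\pi$ of $\{1,\ldots,r\}$ with $T\eV_k=\eV_{\pi(k)}$ for all $k$; hence the operator matrix $[T_{ij}]$ of $T$ with respect to $\eW=\eV_1\oplus\cdots\oplus\eV_r$ satisfies $T_{ij}=0$ whenever $i\ne\pi(j)$, and each block $T_{\pi(k)k}\colon\eV_k\to\eV_{\pi(k)}$ is invertible because $T$ is. Set $T_k=S_{\pi(k)}^{-1}T_{\pi(k)k}S_k\in\pL(\eV_0)^{-1}$, so that $T_{\pi(k)k}=S_{\pi(k)}T_kS_k^{-1}$, as desired. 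To see $T_k\in\Col(N)$, take $\eL\in\Lat(N)$ and form $\eM\in\Lat(B)$ whose $k$-th block is $S_k\eL\in\Lat(N_k)$ and whose remaining blocks are $\{0\}$. Then $T\eM$ has a single nonzero block, sitting in position $\pi(k)$ and equal to $T_{\pi(k)k}S_k\eL$; since $T\eM\in\Lat(B)$, the splitting property forces $T_{\pi(k)k}S_k\eL\in\Lat(N_{\pi(k)})$, whence $T_k\eL=S_{\pi(k)}^{-1}T_{\pi(k)k}S_k\eL\in\Lat(N)$. Applying the same argument to $T^{-1}\in\Col(B)$, whose associated permutation is $\pi^{-1}$ and whose block in position $(k,\pi(k))$ is $T_{\pi(k)k}^{-1}$, yields $T_k^{-1}\in\Col(N)$, so $T_k\in\Col(N)$.

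For the converse, suppose $T$ has the stated block form for some permutation $\pi$ and some $T_1,\ldots,T_r\in\Col(N)$. Then $T$ is invertible, being a block matrix with exactly one nonzero, invertible block in each block-row and block-column. Given $\eM\in\Lat(B)$, write $\eM=\eM_1\oplus\cdots\oplus\eM_r$ with $\eM_j\in\Lat(N_j)$; then $T\eM=\bigoplus_{j}T_{j,\pi^{-1}(j)}\eM_{\pi^{-1}(j)}$, and writing $T_{j,\pi^{-1}(j)}=S_jT_{\pi^{-1}(j)}S_{\pi^{-1}(j)}^{-1}$ we see that the $j$-th summand equals $S_j\bigl(T_{\pi^{-1}(j)}(S_{\pi^{-1}(j)}^{-1}\eM_{\pi^{-1}(j)})\bigr)$, which lies in $\Lat(N_j)$ because $S_{\pi^{-1}(j)}^{-1}\eM_{\pi^{-1}(j)}\in\Lat(N)$, $T_{\pi^{-1}(j)}$ preserves $\Lat(N)$, and $S_j$ maps $\Lat(N)$ into $\Lat(N_j)$. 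Hence $T(\Lat(B))\subseteq\Lat(B)$. Since $T^{-1}$ has the same shape — with permutation $\pi^{-1}$ and collineations $T_k^{-1}\in\Col(N)$ — the identical computation gives $T^{-1}(\Lat(B))\subseteq\Lat(B)$, and the two inclusions together show $T\in\Col(B)$.

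I expect the only genuine difficulty to be the index bookkeeping: keeping straight the passage between the block $T_{\pi(k)k}$ and the normalized collineation $T_k$, and between $T$ and $T^{-1}$ with their opposite permutations. The mathematical substance is entirely contained in \Cref{prop04}, the splitting property \Cref{prop01}(iii), and the lattice isomorphisms induced by the similarities $S_j$; it is worth stating the harmless fact $\{0\}\in\Lat(N_j)$ explicitly in the write-up, since it is what makes the single-summand test subspace in the forward direction legitimate.
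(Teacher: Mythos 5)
Your proof is correct and follows essentially the same route as the paper: \Cref{prop04} supplies the permutation, the splitting $\Lat(B)=\Lat(N_1)\oplus\cdots\oplus\Lat(N_r)$ handles both directions, and the similarities $S_j$ transport everything to $\Col(N)$. The only difference is that you explicitly verify the reverse inclusion via $T^{-1}$ (and $T_k^{-1}$), which the paper leaves implicit; this is a harmless extra precaution, not a divergence in method.
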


\begin{proof}
Let $\pi$ be a permutation of $\{ 1,\ldots, r\}$ and let $T_1,\ldots, T_r\in \Col(N)$. It is easily seen that for every
$k=1, \ldots, r$ the linear transformation $S_{\pi(k)}T_k S_{k}^{-1}\colon \eV_k \to \eV_{\pi(k)}$ is invertible and that it induces a lattice isomorphism $\Phi_{S_{\pi(k)}T_k S_{k}^{-1}}\colon \Lat(N_k) \to \Lat(N_{\pi(k)})$. Since
$\Lat(B)=\Lat(N_1)\oplus \cdots\oplus \Lat(N_r)$ it follows that the linear transformation $T\in \pL(\eW)$ whose operator matrix with respect to the decomposition $\eW=\eV_1\oplus \cdots \oplus \eV_r$ is of the form
$\left[ T_{ij}\right]_{i,j=1}^{r}$ with $T_{\pi(k) k}=S_{\pi(k)}T_k S_{k}^{-1}$ for $k=1, \ldots, r$, and
$T_{ij}=0$ otherwise is in $\Col(B)$.

On the other hand, if $T\in \Col(B)$, then by Proposition \ref{prop04} there
exists a permutation $\pi$ of $\{ 1,\ldots, r\}$ such that $T\eV_k=\eV_{\pi(k)}$ for every $k=1, \ldots, r$. Let
$T_k\in \pL(\eV_k)$ be given by $T_k=S_{\pi(k)}^{-1}T|_{\eV_k} S_k$, where $T|_{\eV_k}$ is the restriction of
$T$ to $\eV_k$. It is clear that $T_k$ is a bijection. Let $\eM_k\in \Lat(N_k)$ and let $\eM_{\pi(k)}=T|_{\eV_k}\eM_k
\subseteq \eV_{\pi(k)}$. Since $\eM=\{ 0\}\oplus\cdots \oplus \eM_k\oplus\cdots\oplus \{ 0\}\in \Lat(B)$ it follows that $T\eM=\{ 0\}\oplus\cdots \oplus \eM_{\pi(k)}\oplus\cdots\oplus \{ 0\}$ is in  $ \Lat(B)$. Hence,
$\eM_{\pi(k)}\in \Lat(N_{\pi(k)})$. It is clear that this implies $T_k\in \Col(N_k)$. Let $\left[ T_{ij}\right]_{i,j=1}^{r}$
be the operator matrix of $T$ with respect to the decomposition $\eW=\eV_1\oplus \cdots \oplus \eV_r$. Then
$T_{\pi(k) k}=S_{\pi(k)}T_k S_{k}^{-1}$ for $k=1, \ldots, r$, and $T_{ij}=0$ if $(i,j)\ne (\pi(k),k)$ for every $k$.
\end{proof}

\section{Reflexive cover of the commutant of a linear transformation} \label{sec03}
\setcounter{theorem}{0}

Recall that by \Cref{prop08}, for every linear transformation $A\in \pL(\eV)$ the set of all invertible linear transformations of the commutant $(A)'$ of $A$ is always contained in $\Col(A)$. If $A$ is nilpotent, then $\Col(A)$ is always contained in a particular subalgebra of $\pL(\eV)$ containing $\Col(A)$ (see \Cref{prop07}). This algebra is the algebra $\Alg \Lat(A)'$ which coincides with the so-called reflexive cover $\Refl (A)'$ of the commutant of $A$.  In order to prove this result we need some preparation.

We start by recalling some basic notions and facts needed throughout this section.
For a non-empty family $\fF\subseteq \fL$ of subspaces it is easily seen that the set
$$\Alg\fF=\{ T\in \pL(\eV);\; T\eM\subseteq \eM\; \text{for every}\; \eM\in \fF\}$$ of linear transformations on $\eV$
is an algebra which contains the identity transformation $I$. On the other hand, for a non-empty subset $\pS\subseteq \pL(\eV)$,
the set $\Lat\,\pS=\bigcap_{T\in \pS}\Lat(T)$ is a sublattice of $\fL$.
By \cite[Proposition 22.3]{Con} we have $\fF\subseteq \Lat\,\Alg \fF$, $\pS\subseteq \Alg \Lat\,\pS$,
$\Alg \fF= \Alg\Lat\,\Alg \fF$ and $\Lat\,\pS= \Lat\,\Alg \Lat\,\pS$.

Let $\eV$ and $\eW$ be complex vector spaces and let $\pS$ a linear subspace of $\pL(\eV,\eW)$.
Reflexive cover of $\pS$ is
\begin{equation} \label{eq75}
\Refl \pS=\{ T\in \pL(\eV,\eW);\; \text{for every}\; x\in \eV\; \text{there exists}\; S_x\in \pS\; \text{such that}\; Tx=S_x x\}.
\end{equation}
It is clear that $\Refl \pS$ is a subspace of $\pL(\eV,\eW)$ which contains $\pS$. If $\Refl \pS=\pS$, then $\pS$ is said
to be a reflexive subspace of $\pL(\eV,\eW)$. It should be noted that this notion of reflexivity is not connected to the notion of reflexive Banach spaces.

\begin{lemma} \label{lem65}
For every subspace $\pS\subseteq \pL(\eV)$ we have $\Refl \pS\subseteq \Alg\Lat \pS$. If
$\pA\subseteq \pL(\eV)$ is an algebra such that $I\in \pA$, then $\Alg\Lat \pA=\Refl \pA$.
\end{lemma}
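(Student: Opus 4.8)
The plan is to treat the two assertions separately, since each reduces to a short direct verification.

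For the inclusion $\Refl\pS\subseteq\Alg\Lat\pS$, I would fix $T\in\Refl\pS$, pick an arbitrary $\eM\in\Lat\pS$, and show $T\eM\subseteq\eM$. Given $x\in\eM$, the definition \eqref{eq75} provides some $S_x\in\pS$ with $Tx=S_x x$; since $\eM\in\Lat\pS=\bigcap_{S\in\pS}\Lat(S)$ is in particular invariant under $S_x$, we get $Tx=S_x x\in\eM$. As $x\in\eM$ and $\eM\in\Lat\pS$ were arbitrary, this yields $T\in\Alg\Lat\pS$. Note that this half uses nothing about the structure of $\pS$ beyond its being a set of transformations.

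For the equality $\Alg\Lat\pA=\Refl\pA$ with $\pA$ a unital algebra, one inclusion is just the special case $\pS=\pA$ of what was proved above. For the reverse inclusion $\Alg\Lat\pA\subseteq\Refl\pA$, I would fix $T\in\Alg\Lat\pA$ and an arbitrary $x\in\eV$, and construct $S_x\in\pA$ with $Tx=S_x x$. The key step is to observe that the orbit $\pA x=\{Sx;\ S\in\pA\}$ is itself a member of $\Lat\pA$: it is a subspace of $\eV$, being the image of the subspace $\pA\subseteq\pL(\eV)$ under the linear evaluation map $S\mapsto Sx$; it contains $x$ because $I\in\pA$; and for every $R\in\pA$ we have $R(\pA x)=\{(RS)x;\ S\in\pA\}\subseteq\pA x$ since $RS\in\pA$. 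Applying $T\in\Alg\Lat\pA$ to this invariant subspace and using $x\in\pA x$ gives $Tx\in T(\pA x)\subseteq\pA x$, hence $Tx=S_x x$ for some $S_x\in\pA$, which is exactly the condition for $T\in\Refl\pA$.

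The only point needing any care — the mild ``main obstacle'' — is the verification that $\pA x\in\Lat\pA$, and this is precisely where both hypotheses on $\pA$ enter: unitality is used to secure $x\in\pA x$, and closure under multiplication is used to secure $\pA(\pA x)\subseteq\pA x$. Beyond this there is nothing deeper, and both implications fit into a few lines.
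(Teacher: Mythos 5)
Your proof is correct and follows essentially the same route as the paper: the first inclusion by evaluating $T$ at a point of an invariant subspace via the defining property of the reflexive cover, and the reverse inclusion by observing that the orbit $\pA x$ lies in $\Lat\pA$ and contains $x$ because $I\in\pA$. The only difference is that you spell out the verification that $\pA x\in\Lat\pA$, which the paper leaves implicit.
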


\begin{proof}
Let $\eM\in \Lat \pS$ and let $T\in \Refl \pS$. Let $x\in \eM$ be arbitrary. By the definition of the reflexive cover,
there exists $S_x\in \pS$ such that $Tx=S_x x$. Since $\eM$ is invariant for every linear transformation in $\pS$
we have $Tx\in \eM$. Hence, $T\in \Alg\Lat \pS$.

Let $\pA\subseteq \pL(\eV)$ be an algebra which contains $I$. Choose $T\in \Alg\Lat\,\pA$. If $x\in \eV$, then $\pA x\in \Lat\, \pA$. Hence, $T\pA x\subseteq \pA x$.
Since $I\in \pA$ we have $Tx\in \pA x$ which means that there exists $A_x\in \pA$ such that $Tx=A_x x$.
\end{proof}

\begin{lemma} \label{lem68}
Let $\pS\subseteq \pL(\eV,\eW)$ be a subspace and let $\pA\subseteq \pL(\eW)$, $\pB\subseteq \pL(\eV)$ be
subalgebras. If $\pS$ is a $\pB$-$\pA$-module, that is, $BSA\in \pS$ for arbitrary $A\in \pA$, $B\in \pB$ and $S\in \pS$,
then $\Refl \pA$ and $\Refl\pB$ are algebras and $\Refl \pS$ is a $\Refl \pB$-$\Refl\pA$-module.
\end{lemma}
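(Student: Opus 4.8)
The plan is to prove the two assertions with one and the same elementary pointwise device: if an operator $T$ lies in a reflexive cover, then on any single vector $T$ coincides with an honest member of the underlying family, so after such a replacement it suffices to know that the underlying families are closed under the relevant compositions. Both parts are really just bookkeeping around this remark; I would dispose of the ``algebra'' claim first, since it is needed even to give a meaning to the word ``module'' in the second claim.

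For the first assertion I would show that $\Refl\pA$ is an algebra, the argument for $\Refl\pB$ being identical. By the remark preceding \Cref{lem65}, $\Refl\pA$ is already a linear subspace of $\pL(\eW)$ (and contains $\pA$), so only closure under composition needs to be checked. Given $T_1,T_2\in\Refl\pA$ and a vector $x$, I would first pick $A\in\pA$ with $T_2x=Ax$ and then, applying the defining property of $T_1$ \emph{to the vector $Ax$}, pick $A'\in\pA$ with $T_1(Ax)=A'(Ax)$; hence $T_1T_2x=(A'A)x$ with $A'A\in\pA$ because $\pA$ is an algebra, and since $x$ is arbitrary, $T_1T_2\in\Refl\pA$. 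Note that unitality of $\pA$ is never used (and if $I\in\pA$, then trivially $I\in\Refl\pA$).

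For the second assertion I would fix $B'\in\Refl\pB$, $S'\in\Refl\pS$, $A'\in\Refl\pA$ and prove $B'S'A'\in\Refl\pS$. Given $x\in\eV$, I would localize in three successive steps, each time evaluating the appropriate reflexivity condition at the concrete vector produced by the previous step: choose $A\in\pA$ with $A'x=Ax$; then choose $S\in\pS$ with $S'(Ax)=S(Ax)$; then choose $B\in\pB$ with $B'(S(Ax))=B(S(Ax))$. Combining these equalities gives $(B'S'A')x=(BSA)x$, and here the full bimodule hypothesis is used exactly once: $BSA\in\pS$ because $\pS$ is a $\pB$-$\pA$-module. Since $x$ was arbitrary, $B'S'A'\in\Refl\pS$; together with the fact (recorded above) that $\Refl\pS$ is a subspace, this is precisely the statement that $\Refl\pS$ is a $\Refl\pB$-$\Refl\pA$-module.

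I do not expect a genuine obstacle. The one point requiring care is the \emph{order} of localization in the second part: one must peel off $A'$ first, then $S'$, then $B'$, feeding each step the vector produced by the previous one, and defer the single use of the module closure to the very end; replacing $B'$, $S'$, $A'$ independently by members of $\pB$, $\pS$, $\pA$ agreeing with them merely on the one vector $x$ would not in general assemble into a single element of $\pS$. It is worth noting that the argument is entirely pointwise---none of the $\Lat$/$\Alg$ material of \Cref{lem65} enters---and that nowhere is $\pA$ or $\pB$ (hence neither of their reflexive covers) assumed unital.
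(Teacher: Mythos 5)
Your proposal is correct and is essentially the paper's own proof: the same successive pointwise localization (peel off the $\Refl\pA$ factor first, then the $\Refl\pS$ factor at the resulting vector, then the $\Refl\pB$ factor), with the bimodule hypothesis invoked once at the end, and the algebra claim handled by the same two-step device. The only difference is presentational --- the paper proves the module statement first and dismisses the algebra claim as ``similar reasoning,'' whereas you spell out the algebra argument first.
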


\begin{proof}
Let $A\in \Refl \pA$, $B\in \Refl \pB$ and $T\in \Refl \pS$ be arbitrary. For $x\in \eV$, there exist
$C_x\in \pA$ such that $Ax=C_x x$, $S_{Ax}\in \pS$ such that $T(Ax)=S_{Ax}(Ax)$, and $D_{TAx}\in \pB$
such that $B(TAx)=D_{TAx}(TAx)$. Since $BTAx=D_{TAx}S_{Ax}C_x x$ and $D_{TAx}S_{Ax}C_x\in \pS$
we conclude that $BTA\in \Refl \pS$.
It is clear that a similar reasoning gives that $\Refl \pA$ and $\Refl \pB$ are algebras, and therefore, $\Refl \pS$ is
a $\Refl \pB$-$\Refl\pA$-module.
\end{proof}

Let $r\geq 2$ be an integer. For $j=1, \ldots, r$, let $\eV_j$, $\eW_j$  be complex vector spaces and let
$\pS_j\subseteq \pL(\eV_j,\eW_j)$ be subspaces. Let us denote $\eV=\eV_1\oplus\cdots\oplus \eV_r$, $\eW=\eW_1\oplus\cdots\oplus \eW_r$ and $\pS=\pS_1\oplus\cdots\oplus \pS_r$. The following lemma yields that $\Refl \pS$ decomposes into the direct sum of $\Refl \pS_j$ for $j=1,\ldots,r$.

\begin{lemma} \label{lem69}
$\Refl \pS=\Refl \pS_1\oplus\cdots\oplus \Refl \pS_r$.
\end{lemma}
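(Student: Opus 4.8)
The plan is to prove the two inclusions separately. The single idea behind everything is that if a vector $x$ lies in one summand $\eV_j$, then any block-diagonal operator $S=S_1\oplus\cdots\oplus S_r\in\pS$ acts on $x$ only through its $j$-th block: $Sx=S_jx\in\eW_j$. Combined with the pointwise nature of the reflexive cover \eqref{eq75}, this reduces the whole statement to bookkeeping.

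First I would establish $\Refl\pS_1\oplus\cdots\oplus\Refl\pS_r\subseteq\Refl\pS$. Take $T=T_1\oplus\cdots\oplus T_r$ with $T_j\in\Refl\pS_j$ and an arbitrary $x=x_1+\cdots+x_r\in\eV$ with $x_j\in\eV_j$. For each $j$ choose $S_{x,j}\in\pS_j$ with $T_jx_j=S_{x,j}x_j$, and put $S_x=S_{x,1}\oplus\cdots\oplus S_{x,r}\in\pS$. Then $S_xx=\sum_{j}S_{x,j}x_j=\sum_{j}T_jx_j=Tx$, so $T\in\Refl\pS$ by \eqref{eq75}.

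For the reverse inclusion I would start from $T\in\Refl\pS$ and first show $T$ is block-diagonal for the given decompositions, i.e. $T\eV_j\subseteq\eW_j$: given $x_j\in\eV_j$, pick $S_{x_j}=S_{x_j,1}\oplus\cdots\oplus S_{x_j,r}\in\pS$ with $Tx_j=S_{x_j}x_j=S_{x_j,j}x_j\in\eW_j$. Hence $T=T_1\oplus\cdots\oplus T_r$ with $T_j=T|_{\eV_j}\colon\eV_j\to\eW_j$, and the very same identity reads $T_jx_j=S_{x_j,j}x_j$ with $S_{x_j,j}\in\pS_j$ for every $x_j\in\eV_j$, so $T_j\in\Refl\pS_j$. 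Therefore $T\in\Refl\pS_1\oplus\cdots\oplus\Refl\pS_r$, which finishes the proof.

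I do not anticipate a genuine obstacle: the content is just the interplay between the pointwise definition of $\Refl$ and the block structure of $\pS$. The only point requiring a little care is to keep the evaluating operator $S_x\in\pS$ (which depends on the point $x$) and its diagonal blocks $S_{x,j}\in\pS_j$ notationally distinct, and to record that for $x$ supported in $\eV_j$ only the block $S_{x,j}$ matters.
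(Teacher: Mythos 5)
Your proof is correct and follows essentially the same route as the paper: for one inclusion you assemble the block-diagonal operator $S_{x,1}\oplus\cdots\oplus S_{x,r}\in\pS$ from the blockwise evaluators, and for the other you test $T\in\Refl\pS$ on vectors supported in a single summand $\eV_j$ to force the off-diagonal blocks to vanish and to exhibit $T_j\in\Refl\pS_j$. The paper phrases the second half via the operator matrix $[T_{ij}]$ rather than the statement $T\eV_j\subseteq\eW_j$, but this is only a notational difference.
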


\begin{proof}
Assume that $T\in \Refl \pS$ and let $\left[ T_{ij}\right]_{i,j=1}^{r}$ be the operator matrix of $T$ with respect to the
decompositions  $\eV=\eV_1\oplus\cdots\oplus \eV_r$ and $\eW=\eW_1\oplus\cdots\oplus \eW_r$.
Choose an arbitrary $x_j\in \eV_j$ and let $x=0\oplus\cdots\oplus x_j\oplus\cdots\oplus 0\in \eV$. By the definition
of the reflexive cover, there exists $S_x=S_{x}^{(1)}\oplus\cdots\oplus S_{x}^{(r)} \in \pS$ such that
$Tx=S_x x$. Since $Tx=T_{1j}x_j\oplus\cdots\oplus T_{rj}x_j$ and $S_x x=0\oplus\cdots\oplus S_{x}^{(j)} x_j\oplus\cdots \oplus 0$ we see that $T_{ij}=0$ if $i\ne j$ and $T_{jj}\in \Refl \pS_j$. Hence, $T\in
\Refl \pS_1\oplus\cdots\oplus \Refl \pS_r$.

To prove the opposite inclusion, pick $T=T_1\oplus\cdots\oplus T_r\in \Refl \pS_1\oplus\cdots\oplus \Refl \pS_r$ and $x=x_1\oplus \cdots \oplus x_r\in X$. Then for each $1\leq j\leq r$ there exists $S_j\in \mathcal S_j$ such that $T_jx_j=S_jx_j$. Since
\begin{align*}
(T_1\oplus \cdots \oplus T_r)(x_1\oplus \cdots \oplus x_r)&=(T_1x_1\oplus \cdots \oplus T_rx_r)=S_1x_1\oplus \cdots\oplus S_rx_r\\
&=
(S_1\oplus \cdots \oplus S_r)(x_1\oplus \cdots \oplus x_r)
\end{align*}
and since
$S_1\oplus\cdots\oplus S_r\in \pS$, we conclude that $T_1\oplus \cdots \oplus T_r\in  \Refl \pS$.
\end{proof}

Let $A\in \pL(\eV)$ and let \eqref{eq05} be its primary decomposition. By Lemma \ref{lem69} and Proposition \ref{prop01} we have
$$ \Refl (A)=\Refl(N_1)\oplus\cdots\oplus\Refl(N_s) \quad \textrm{and}\quad
\Refl (A)'=\Refl(N_1)'\oplus\cdots\oplus\Refl(N_s)'.$$
Hence, in order to study $\Refl (A)$ or $\Refl (A)'$ it is enough to consider only nilpotent linear transformations.
Since in this paper we need only the reflexive cover of the commutant of a linear transformation we will focus on
a description of $\Refl(N)'$, where $N$ is a nilpotent linear transformation. We will restrict ourselves to complex matrices.

\begin{proposition} \label{prop09}
Let $m,n\in \bN$. If $m>n$, then $\bdS\in (\bdJ_m,\bdJ_n)^\Int$ if and only if the block matrix of $\bdS$
is $\left[ \begin{smallmatrix} p(\bdJ_n)\\ \bdo\end{smallmatrix}\right]$ for some polynomial $p$. Similarly, if $m<n$, then $\bdS\in (\bdJ_m,\bdJ_n)^\Int$ if and only if the block matrix of $\bdS$
is $\left[  \bdo\; p(\bdJ_m)\right]$.
\end{proposition}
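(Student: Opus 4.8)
The plan is to write down the intertwining relation $\bdS\bdJ_n=\bdJ_m\bdS$ entrywise and solve the resulting recursion on the entries of $\bdS$. Fix the case $m>n$, so $\bdS$ is an $m\times n$ matrix; the case $m<n$ is entirely analogous (or can be deduced by taking transposes, since $\bdJ_k^{\T}$ is similar to $\bdJ_k$ via the reversal permutation). Write $\bdS=[s_{ij}]$ with $1\le i\le m$, $1\le j\le n$. Recall that $\bdJ_k\bde_\ell=\bde_{\ell-1}$ for $\ell\ge 2$ and $\bdJ_k\bde_1=\bdo$; equivalently, left multiplication by $\bdJ_m$ shifts rows up (killing the last row) and right multiplication by $\bdJ_n$ shifts columns right (inserting a zero first column). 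Comparing $(i,j)$-entries of $\bdS\bdJ_n$ and $\bdJ_m\bdS$ gives the relation $s_{i+1,j+1}=s_{i,j}$ for all $i<m$, $j<n$, together with the boundary conditions $s_{m,j+1}=0$ for $1\le j\le n-1$ (from the last row of $\bdJ_m\bdS$ being zero) and $s_{i,1}=0$ for $i$ in the appropriate range coming from the zero first column of $\bdS\bdJ_n$ — more precisely the first column of $\bdJ_m\bdS$ equals the first column of $\bdS\bdJ_n=\bdo$, so $s_{i,1}$ for $i\le m-1$ is unconstrained by that equation but $s_{m,1}$ will be forced to $0$ through the recursion.

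The recursion $s_{i+1,j+1}=s_{i,j}$ says that $\bdS$ is constant along diagonals, i.e. $s_{ij}$ depends only on $i-j$. So set $c_\ell=s_{ij}$ whenever $i-j=\ell$; here $\ell$ ranges over $-(n-1),\dots,m-1$. The boundary condition $s_{m,j+1}=0$ for $j=1,\dots,n-1$ forces $c_\ell=0$ for $\ell=m-2,\,m-3,\dots,m-n$, i.e. for all $\ell$ with $m-n\le \ell\le m-2$; combined with $\ell\le m-1$ this leaves $c_{m-1}$ — wait, one must be careful: the condition is on row $m$, columns $2,\dots,n$, which are the diagonals $\ell=m-2,m-3,\dots,m-n$, and since $m>n$ we have $m-n\ge 1>0$, so in particular $c_\ell=0$ for every $\ell\ge 1$ except possibly we must check $\ell=m-1$: the entry $s_{m,1}$ sits on diagonal $m-1$, and by the recursion $s_{m,1}=s_{m-1,0}$ is not defined, but running the recursion forward from $s_{m-1,1}$ we get nothing; instead note $s_{m,1}$ lies on diagonal $m-1\ge m-n$ once $n\ge 1$, hence it is already covered and equals $0$. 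Thus $c_\ell=0$ for all $\ell\ge 1$, while $c_0,c_{-1},\dots,c_{-(n-1)}$ remain free. Writing $p(z)=c_0+c_{-1}z+\dots+c_{-(n-1)}z^{n-1}$, the top $n\times n$ block of $\bdS$ is exactly the upper-triangular Toeplitz matrix $p(\bdJ_n)$ (recall from the Preliminaries that such matrices are precisely the elements of $(\bdJ_n)'=(\bdJ_n)$), and the bottom $(m-n)\times n$ block is $\bdo$. Conversely any $\bdS=\left[\begin{smallmatrix}p(\bdJ_n)\\ \bdo\end{smallmatrix}\right]$ satisfies $\bdS\bdJ_n=\bdJ_m\bdS$: this is a direct check using $p(\bdJ_n)\bdJ_n=\bdJ_n p(\bdJ_n)$ and the fact that appending zero rows intertwines the shift on the larger space with the shift on the smaller one. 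This proves the $m>n$ statement, and the $m<n$ case follows by the transpose trick described above, turning a map $\bC^n\to\bC^m$ intertwining shifts into a map $\bC^m\to\bC^n$ of the same type, whose matrix is $[\,\bdo\ \ p(\bdJ_m)\,]$.

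I do not expect any serious obstacle here; the only thing requiring care is the bookkeeping of indices at the two boundaries (the vanishing last row of $\bdJ_m\bdS$ and the vanishing first column of $\bdS\bdJ_n$) to be sure exactly which diagonals are forced to zero and which stay free, and to confirm the free ones assemble into a genuine polynomial in $\bdJ_n$ of degree $<n$. Once the diagonal-constancy plus boundary analysis is done cleanly, both directions and both cases are immediate.
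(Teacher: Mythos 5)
Your overall strategy --- write $\bdJ_m\bdS=\bdS\bdJ_n$ entrywise, deduce constancy along diagonals, and use the boundary equations to decide which diagonals vanish --- is a legitimate and genuinely different route from the paper's. The paper treats the case $m<n$ and argues globally: applying the relation to the cyclic vector $\bde_n$ gives $\bdJ_m^{j}\bdS\bde_n=\bdS\bde_{n-j}$, and nilpotency ($\bdJ_m^{j}=\bdo$ for $j\geq m$) immediately kills the first $n-m$ columns of $\bdS$, after which the remaining square block commutes with $\bdJ_m$ and is a polynomial in $\bdJ_m$ by the known description of $(\bdJ_m)'$. Your converse direction and the reduction of one case to the other by conjugating transposes with reversal permutations are fine in outline.

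The problem sits exactly in the boundary bookkeeping you flag as the delicate point: as stated it is wrong, and your conclusion does not follow from it. With the convention $\bdJ_k\bde_1=\bdo$, $\bdJ_k\bde_\ell=\bde_{\ell-1}$, the first column of $\bdJ_m\bdS$ has entries $s_{21},s_{31},\dots,s_{m1},0$, so equating it with the zero first column of $\bdS\bdJ_n$ forces $s_{i1}=0$ for \emph{every} $i\geq 2$, with only $s_{11}$ unconstrained --- the opposite of your claim that $s_{i,1}$ is unconstrained for $i\leq m-1$. This first-column condition is the one that matters: each diagonal $\ell=i-j\geq 1$ begins at the entry $(\ell+1,1)$, so it is precisely this condition that yields $c_\ell=0$ for all $\ell\geq 1$. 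The last-row equation, by contrast, reads $0=(\bdS\bdJ_n)_{mj}=s_{m,j-1}$, i.e. $s_{m,1}=\dots=s_{m,n-1}=0$ (your version $s_{m,2}=\dots=s_{m,n}=0$ is shifted by one), and in either form these entries lie only on the diagonals $\ell\geq m-n$, so they cannot reach the diagonals $1\leq \ell<m-n$. Taking your stated conditions literally, for $m=5$, $n=2$ the entries $s_{21},s_{31}$ (and hence $s_{32},s_{42}$) remain free, giving an intertwiner space of too large a dimension; thus the step ``hence $c_\ell=0$ for all $\ell\geq 1$'' is not justified by what precedes it, and the patch you attempt for the diagonal $\ell=m-1$ is inconsistent with the range $m-n\leq\ell\leq m-2$ you derived. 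The repair is routine --- restore the correct first-column condition --- but as written the derivation would fail.
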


\begin{proof}
We only consider the case $m<n$ as the other case can be proved similarly. Since the matrix $\bdJ_n$ can be written as
$$ \left[ \begin{matrix}
\bdJ_{n-m} & \bdE_{n-m,1} \\
\bdo & \bdJ_{m}
\end{matrix}\right]$$
with respect to the decomposition $\bC^n=\bC^{n-m}\oplus \bC^{m}$, it
is easily seen that for $\bdS=\left[ \bdo\; p(\bdJ_m) \right]$ we have
$\bdJ_m\bdS=\bdS \bdJ_n$.

To prove the converse statement, let $\bdS\in \bM_{m\times n}$ be an arbitrary matrix such that $\bdJ_m \bdS=\bdS \bdJ_n$.
For every $j=0,1, \ldots, n-1$ we have $\bdJ_{n}^{j}\bde_n=\bde_{n-j}$ and so
$ \bdJ_{m}^{j}\bdS \bde_n=\bdS \bdJ_{n}^{j}\bde_n=\bdS \bde_{n-j}$.
Since $\bdJ_{m}^{j}=\bdo$ if $j\geq m$ we have $\bdS \bde_{i}=\bdo$ for $i=1, \ldots, n-m$. Hence, the block matrix
of $\bdS$ with respect to the decomposition $\bC^n=\bC^{n-m}\oplus \bC^m$ is of the form $\bdS=\left[ \bdo \; \bdB\right]$
for some $\bdB\in \bM_{m\times m}$.
It follows from  $\bdJ_m \bdS=\bdS \bdJ_n$ that $\bdJ_m\bdB=\bdB \bdJ_m$, and hence,  $\bdB=p(\bdJ_m)$ for some polynomial $p$.
\end{proof}

Let $\bdN\in \bM_{n\times n}$ be a nilpotent matrix. We may assume that $ \bdN=\bdJ_{n_{1}}\oplus\cdots\oplus \bdJ_{n_{k}}$, where  $n_{1}+\cdots+n_{k}=n$.
It is not hard to see that a matrix $\bdB\in \bM_{n\times n}$ commutes with $\bdN$ if and only if the block matrix
of $\bdB$ with respect to the decomposition $\bC^n=\bC^{n_{1}}\oplus \cdots\oplus \bC^{n_k}$ is
$\bdB=\left[ \bdB_{ij} \right]_{i,j=1}^{k}$, where $\bdB_{ij}\in (\bdJ_{n_i},\bdJ_{n_j})^\Int$. We can write this as
$ (\bdN)'=\bigl[(\bdJ_{n_i},\bdJ_{n_j})^\Int\bigr]_{i,j=1}^{k}$.

\begin{lemma} \label{lem67}
Reflexive cover of the commutant $(\bdN)'$ is $ \Refl(\bdN)'=\bigl[\Refl(\bdJ_{n_i},\bdJ_{n_j})^\Int\bigr]_{i,j=1}^{k}$.
\end{lemma}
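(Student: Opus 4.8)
The plan is to decompose an arbitrary $\bdT\in\Refl(\bdN)'$ into blocks $[\bdT_{ij}]_{i,j=1}^k$ with respect to $\bC^n=\bC^{n_1}\oplus\cdots\oplus\bC^{n_k}$ and to show that the reflexive cover is computed blockwise, i.e.\ $\bdT\in\Refl(\bdN)'$ if and only if each $\bdT_{ij}\in\Refl(\bdJ_{n_i},\bdJ_{n_j})^{\Int}$. The nontrivial containment is ``$\subseteq$''; the reverse containment follows from a routine assembly argument (given blockwise data, reconstruct a global element of $(\bdN)'$ that agrees with $\bdT$ on a fixed vector by acting separately in each summand), so I would dispatch it quickly.

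For the hard inclusion, fix $\bdT\in\Refl(\bdN)'$. The natural idea is to test reflexivity on vectors supported in a single summand. Pick a block index $\ell$ and an arbitrary $x_\ell\in\bC^{n_\ell}$, and set $x=0\oplus\cdots\oplus x_\ell\oplus\cdots\oplus 0$. By the definition \eqref{eq75} of $\Refl(\bdN)'$, there is $\bdB^{(x)}\in(\bdN)'$ with $\bdT x=\bdB^{(x)}x$. Writing $\bdB^{(x)}=[\bdB^{(x)}_{ij}]_{i,j=1}^k$ with $\bdB^{(x)}_{ij}\in(\bdJ_{n_i},\bdJ_{n_j})^{\Int}$, the product $\bdB^{(x)}x$ has $i$-th block $\bdB^{(x)}_{i\ell}x_\ell$. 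Comparing with the $i$-th block $\bdT_{i\ell}x_\ell$ of $\bdT x$, we get $\bdT_{i\ell}x_\ell=\bdB^{(x)}_{i\ell}x_\ell$ with $\bdB^{(x)}_{i\ell}\in(\bdJ_{n_i},\bdJ_{n_\ell})^{\Int}$. Since $x_\ell\in\bC^{n_\ell}$ was arbitrary, this says precisely $\bdT_{i\ell}\in\Refl(\bdJ_{n_i},\bdJ_{n_\ell})^{\Int}$ for every $i$, and letting $\ell$ range over $1,\dots,k$ gives the claim for all blocks.

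For the reverse inclusion, suppose $\bdT=[\bdT_{ij}]$ with each $\bdT_{ij}\in\Refl(\bdJ_{n_i},\bdJ_{n_j})^{\Int}$, and fix $x=x_1\oplus\cdots\oplus x_k\in\bC^n$. For each pair $(i,j)$ choose $\bdB_{ij}\in(\bdJ_{n_i},\bdJ_{n_j})^{\Int}$ with $\bdT_{ij}x_j=\bdB_{ij}x_j$; assembling these into $\bdB=[\bdB_{ij}]_{i,j=1}^k$ produces an element of $(\bdN)'$ by the block description of $(\bdN)'$ recalled before the lemma, and a direct computation of $\bdB x$ block by block shows $\bdT x=\bdB x$. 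Hence $\bdT\in\Refl(\bdN)'$, completing the proof.

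I expect no genuine obstacle here: this is a ``reflexive cover commutes with the matrix-block decomposition'' statement, essentially the same mechanism as in \Cref{lem69} but for an intertwiner-module block structure rather than a plain direct sum, and the only point requiring a little care is keeping track of which block of $\bdB^{(x)}x$ is being compared with which block of $\bdT x$. One should note that the reflexive cover $\Refl\pS$ of a subspace $\pS$ is nonlinear in nature, so the $\bdB^{(x)}$ genuinely depends on $x$; this is why the argument proceeds vector by vector rather than by a linear-algebra identification of the two spaces.
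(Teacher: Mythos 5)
Your proposal is correct and follows essentially the same route as the paper's proof: testing reflexivity on vectors supported in a single summand to extract the blockwise condition, and assembling blockwise intertwiners into a global element of $(\bdN)'$ for the converse. No issues.
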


\begin{proof}
Assume that $\bdT\in \Refl(\bdN)'$ and let $\left[ \bdT_{ij}\right]_{i,j=1}^{k}$ be its block matrix with respect to
the decomposition $\bC^n=\bC^{n_1}\oplus\cdots\oplus\bC^{n_k}$. Let $\bdx_j\in \bC^{n_j}$ be arbitrary
and let $\bdx=\bdo\oplus\cdots\oplus \bdx_j\oplus\cdots\oplus \bdo\in \bC^n$. By the definition of the reflexive
cover, there exists a matrix $\bdS_{\bdx}\in (\bdN)'$ such that $\bdT\bdx=\bdS_{\bdx} \bdx$. Let $\left[ \bdS_{\bdx}^{ij}\right]_{i,j=1}^{k}$ be the block matrix of $\bdS_{\bdx}$. Then $\bdS_{\bdx}^{ij}\in (\bdJ_{n_i},\bdJ_{n_j})^\Int$ for all indices $1\leq i, j\leq k$. Since
$\bdT \bdx=\bdT_{1j}\bdx_j\oplus\cdots\oplus \bdT_{kj}\bdx_j$ and
$\bdS_{\bdx} \bdx=\bdS_{\bdx}^{1j}\bdx_j\oplus\cdots\oplus \bdS_{\bdx}^{kj}\bdx_j$ we have
$\bdT_{ij}\bdx_j=\bdS_{\bdx}^{ij}\bdx_j$ for every $i=1, \ldots, k$ from where we conclude that $\bdT_{ij}\in \Refl(\bdJ_{n_i},\bdJ_{n_j})^\Int$ for all $1\leq i, j\leq k$.

To prove the opposite inclusion, suppose that $\bdT=\left[ \bdT_{ij}\right]_{i,j=1}^{k}$ is such that $\bdT_{ij}\in \Refl(\bdJ_{n_i},\bdJ_{n_j})^\Int$ for all $1\leq i,j\leq k$ and let $\bdx=\bdx_1\oplus\cdots\oplus\bdx_k$ be arbitrary.
There exist matrices $\bdS_{\bdx_j}^{ij}\in (\bdJ_{n_i},\bdJ_{n_j})^\Int$ such that $\bdT_{ij}\bdx_j=
\bdS_{\bdx_j}^{ij}\bdx_j$ ($1\leq i,j\leq k$). Let $\bdS_{\bdx}=\left[\bdS_{\bdx_j}^{ij}\right]_{i,j}^{k}$. Then
$\bdS_{\bdx}\in (\bdN)'$ and $\bdT \bdx=\bdS_{\bdx}\bdx$.
\end{proof}

\begin{proposition} \label{prop10}
Let $m,n\in \bN$. If $m \geq n$, then $\bdT\in \Refl (\bdJ_m,\bdJ_n)^\Int$ if and only if the block matrix of $\bdT$
is of the form $\left[ \begin{smallmatrix} \bdU\\ \bdo\end{smallmatrix}\right]$ for some upper-triangular matrix $\bdU\in \bM_{n\times n}$. Similarly, if $m\leq n$, then $\bdT\in \Refl(\bdJ_m,\bdJ_n)^\Int$ if and only if the block matrix of $\bdT$
is of the form $\left[  \bdo\; \bdV\right]$   for some upper-triangular matrix $\bdV\in \bM_{m\times m}$ .
\end{proposition}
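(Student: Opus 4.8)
The plan is to exploit the evident symmetry between the two cases: I would prove the statement for $m\ge n$ in detail and then remark that $m\le n$ is handled by the same argument with rows and columns interchanged (now invoking the second half of \Cref{prop09}). So assume $m\ge n$ and decompose $\bC^m=\bC^n\oplus\bC^{m-n}$. By \Cref{prop09} (and, when $m=n$, by the fact that $(\bdJ_n)'$ consists precisely of the upper-triangular Toeplitz matrices) every $\bdS\in(\bdJ_m,\bdJ_n)^\Int$ has the block form $\bdS=\left[\begin{smallmatrix}p(\bdJ_n)\\ \bdo\end{smallmatrix}\right]$ for some $p\in\bC[z]$, so that $\bdS\bdy=p(\bdJ_n)\bdy\oplus\bdo$ for every $\bdy\in\bC^n$.

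For the inclusion $\Refl(\bdJ_m,\bdJ_n)^\Int\subseteq\{\text{matrices of the asserted form}\}$, I would take $\bdT\in\Refl(\bdJ_m,\bdJ_n)^\Int$, write $\bdT=\left[\begin{smallmatrix}\bdT_1\\ \bdT_2\end{smallmatrix}\right]$ with $\bdT_1\in\bM_{n\times n}$ and $\bdT_2\in\bM_{(m-n)\times n}$, and test the defining property of the reflexive cover on the standard basis vectors $\bde_1,\dots,\bde_n$ of $\bC^n$. For each $k$ there is a polynomial $p$ with $\bdT\bde_k=p(\bdJ_n)\bde_k\oplus\bdo=\bigl(\sum_{j=0}^{k-1}p_j\bde_{k-j}\bigr)\oplus\bdo$; the $\bdo$-block forces $\bdT_2=\bdo$, while $p(\bdJ_n)\bde_k\in\bigvee\{\bde_1,\dots,\bde_k\}$ forces the $k$-th column of $\bdT_1$ to vanish below the diagonal. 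Hence $\bdT_1$ is upper-triangular, which is exactly the claimed shape (with $\bdU=\bdT_1$).

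For the reverse inclusion I would start from $\bdT=\left[\begin{smallmatrix}\bdU\\ \bdo\end{smallmatrix}\right]$ with $\bdU$ upper-triangular and fix an arbitrary $\bdy\in\bC^n$, the case $\bdy=\bdo$ being trivial. Let $m_0$ be the largest index with $y_{m_0}\ne0$. Using the description of cyclic subspaces of a Jordan block from the paragraph preceding \Cref{lem60} (here $\bdJ_n^{m_0-1}\bdy\ne\bdo$ and $\bdJ_n^{m_0}\bdy=\bdo$, so $\dim(\bdJ_n)_{\bdy}=m_0$) together with the containment $(\bdJ_n)_{\bdy}\subseteq\bigvee\{\bde_1,\dots,\bde_{m_0}\}$ coming from $\bdJ_n^{j}\bdy\in\bigvee\{\bde_1,\dots,\bde_{m_0-j}\}$, I get $(\bdJ_n)_{\bdy}=\bigvee\{\bde_1,\dots,\bde_{m_0}\}$. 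Since $\bdU$ is upper-triangular and $y_l=0$ for $l>m_0$, also $\bdU\bdy\in\bigvee\{\bde_1,\dots,\bde_{m_0}\}=(\bdJ_n)_{\bdy}$, so $\bdU\bdy=p(\bdJ_n)\bdy$ for some $p\in\bC[z]$; then $\bdS=\left[\begin{smallmatrix}p(\bdJ_n)\\ \bdo\end{smallmatrix}\right]\in(\bdJ_m,\bdJ_n)^\Int$ satisfies $\bdS\bdy=\bdT\bdy$. As $\bdy$ was arbitrary, $\bdT\in\Refl(\bdJ_m,\bdJ_n)^\Int$.

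Everything outside the last paragraph is routine bookkeeping with block matrices. The step carrying the actual content — and the one I would be most careful with — is the identification $(\bdJ_n)_{\bdy}=\bigvee\{\bde_1,\dots,\bde_{m_0}\}$ together with the observation that an upper-triangular matrix cannot push $\bdy$ out of this subspace; this is exactly what makes the reflexive cover strictly larger than $(\bdJ_m,\bdJ_n)^\Int$ (arbitrary upper-triangular blocks rather than only Toeplitz ones). I do not expect any essential difficulty beyond keeping the indices straight, and the case $m\le n$ should need only the cosmetic replacement of $\left[\begin{smallmatrix}\bdU\\ \bdo\end{smallmatrix}\right]$ by $\left[\,\bdo\;\bdV\,\right]$, working with $\bC^n=\bC^{n-m}\oplus\bC^m$ and the second part of \Cref{prop09}.
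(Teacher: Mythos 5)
Your proposal is correct. The forward inclusion (testing the reflexivity condition on the standard basis vectors and invoking \Cref{prop09} to kill the entries below the relevant diagonal) is essentially identical to the paper's argument, and your symmetry remark for the other case mirrors what the paper does implicitly. The converse inclusion, however, is where you genuinely diverge. The paper argues coordinate by coordinate: it reduces by linearity to the standard unit matrices $\bdE_{j,n-m+j}$, shows each of these is in the reflexive cover by explicitly solving, for every vector $\bdx$, a $k\times k$ linear system whose coefficient matrix is an invertible anti-triangular Hankel matrix, and then propagates to the remaining entries $\bdE_{j,n-m+j+i}$ using the fact (\Cref{lem68}) that the reflexive cover of a right $(\bdJ_n)'$-module is again a right $(\bdJ_n)'$-module. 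You instead handle an arbitrary upper-triangular block and an arbitrary vector $\bdy$ in one stroke, via the observation that $(\bdJ_n)_{\bdy}=\bigvee\{\bde_1,\dots,\bde_{m_0}\}$ where $m_0$ is the index of the top nonzero coordinate of $\bdy$ (a dimension count using the material before \Cref{lem60}), and that an upper-triangular matrix cannot move $\bdy$ out of this coordinate subspace, so $\bdU\bdy=p(\bdJ_n)\bdy$ for some polynomial by the very definition of the cyclic subspace. Your route is shorter and more conceptual: it dispenses with the Hankel-system computation and with \Cref{lem68} altogether, at the cost of nothing beyond facts the paper has already established in \Cref{Section01}; the paper's version is more explicitly constructive (it exhibits the interpolating polynomial as the solution of a concrete system), which is occasionally useful but not needed here.
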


\begin{proof}
We only consider the case $m\leq n$ as the other case can be proved similarly. Choose $T=\left[\tau_{ij}\right]\in \Refl(\bdJ_m,\bdJ_n)^\Int$. To prove that $\bdT$ is of the form $\left[\bdo\; \bdV\right]$ for some upper-triangular matrix $\bdV\in \bM_{m\times m}$, choose arbitrary indices $i\in \{ 1,\ldots,m\}$  and  $j\in \{1,\ldots, n\}$ with $j<n-m+i$ where  $\{\bde_1,\ldots,\bde_n\}$ and  $\{\bdf_1,\ldots,\bdf_m\}$ are the sets of  standard basis in $\bC^n$ and $\bC^m$, respectively. By the definition of the reflexive cover, there exists
$\bdS_{\bde_j}\in (\bdJ_m,\bdJ_n)^\Int$ such that $\bdT \bde_j=\bdS_{\bde_j}\bde_j$. Since by \Cref{prop09} a matrix $\bdS\in \bM_{m\times n}$ is in $(\bdJ_m,\bdJ_n)^\Int$ if and only if
its block matrix is $\left[  \bdo\; p(\bdJ_m)\right]$ for some polynomial $p(z)$ (if $m=n$, then actually
$\bdS=p(\bdJ_m)$) it follows that
$\tau_{ij}=\bdf_{i}^{*}\bdT \bde_j=\bdf_{i}^{*}\bdS_{\bde_j}\bde_j=0$.
Hence, the block matrix of $\bdT$ is
$\left[  \bdo\; \bdV\right]$, where $\bdV\in \bM_{m\times m}$ is an upper-triangular matrix.

Now we will prove that $\left[  \bdo\; \bdV\right] \in \Refl (\bdJ_m,\bdJ_n)^\Int$ for every upper-triangular
matrix $\bdV\in \bM_{m\times m}$. Since $\Refl (\bdJ_m,\bdJ_n)^\Int$ is a complex vector space it is enough to see that
for every pair of integers $(i,j)$ which satisfies $1\leq i\leq m$ and $i\leq j\leq m$ every matrix
$\bdE_{i,n-m+j}\in \bM_{m\times n}$
belongs to $ \Refl (\bdJ_m,\bdJ_n)^\Int$.
Let $j\in \{1,\ldots,m\}$. Then for an arbitrary vector $\bdx=x_1\bde_1+\cdots+x_n \bde_n\in \bC^n$ we have $\bdE_{j,n-m+j}\bdx=x_{n-m+j}\bdf_j$. If $x_{n-m+j}=0$, then $\bdE_{j,n-m+j}\bdx=\bdo_{m\times n}\bdx$. Assume therefore
that $x_{n-m+j}\ne 0$ and let $k$ be the largest integer in $\{1, \ldots, m\}$ such that $x_{n-m+k}\ne 0$.
Consider the following system of $k$ linear equations with $k$ variables $\xi_1,\ldots,\xi_k$:
\begin{equation} \label{eq80}
\begin{split}
x_{n-m+i}&\xi_1+\cdots+x_{n-m+k}\xi_{k-i+1}=0\qquad (1\leq i\leq k,\, i\ne j)\\
x_{n-m+j}&\xi_1+\cdots+x_{n-m+k}\xi_{k-j+1}=x_{n-m+j}.
\end{split}
\end{equation}
The matrix associated to system \eqref{eq80} is an anti-triangular Hankel matrix whose non-zero diagonal entries are $x_{n-m+k}$. Therefore, it is invertible from where it follows that system \eqref{eq80} has a unique solution $(s_1, \ldots, s_k)$. Let
$p(z)=s_1+s_2 z+\cdots+s_k z^{k-1}$ and $\bdS_{\bdx}=\left[ \bdo\; p(\bdJ_m)\right]\in (\bdJ_m,\bdJ_n)^\Int$.
It is clear that $\bdS_{\bdx}\bdx=\bdE_{j,n-m+j}\bdx$ which proves that $\bdE_{j,n-m+j}\in \Refl (\bdJ_m,\bdJ_n)^\Int$. Since $(\bdJ_m,\bdJ_n)^\Int$ is a right $(\bdJ_n)'$-module, by Lemma \ref{lem68},
$\Refl (\bdJ_m,\bdJ_n)^\Int$ is a right $(\bdJ_n)'$-module, as well. Since $\bdE_{j,n-m+j} \bdJ_{n}^{i}=
\bdE_{j,n-m+j+i}$ for $i=0,\ldots,m-j$ we conclude that $\bdE_{j,n-m+j+i}\in \Refl (\bdJ_m,\bdJ_n)^\Int$.
\end{proof}

The following corollary immediately follows from \Cref{prop10}.

\begin{corollary} \label{cor01}
Let $ \bdN=\bdJ_{n_{1}}\oplus\cdots\oplus \bdJ_{n_{k}}$. Then every diagonal $n\times n$ matrix is in
$\Refl(\bdN)'$.
\end{corollary}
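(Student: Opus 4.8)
The plan is to read this off directly from \Cref{lem67} and \Cref{prop10}, so the argument is short. First I would invoke \Cref{lem67} to identify $\Refl(\bdN)'$ with the set of block matrices $\left[\bdT_{ij}\right]_{i,j=1}^{k}$ with respect to the decomposition $\bC^n=\bC^{n_1}\oplus\cdots\oplus\bC^{n_k}$, where each block satisfies $\bdT_{ij}\in\Refl(\bdJ_{n_i},\bdJ_{n_j})^{\Int}$. Next I would observe that if $\bdD\in\bM_{n\times n}$ is diagonal (with respect to the standard basis of $\bC^n$), then, since each $\bC^{n_i}$ is the span of a block of consecutive standard basis vectors, the block matrix of $\bdD$ is $\diag(\bdD_1,\ldots,\bdD_k)$, where $\bdD_i\in\bM_{n_i\times n_i}$ is itself a diagonal matrix and all off-diagonal blocks vanish.

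It then remains to check each block lies in the appropriate reflexive cover. For the off-diagonal blocks this is immediate: $\bdo\in(\bdJ_{n_i},\bdJ_{n_j})^{\Int}$ trivially (it is $\left[\begin{smallmatrix}p(\bdJ_n)\\ \bdo\end{smallmatrix}\right]$ or $\left[\bdo\; p(\bdJ_m)\right]$ with $p=0$), hence $\bdo\in\Refl(\bdJ_{n_i},\bdJ_{n_j})^{\Int}$. For a diagonal block $\bdD_i$ I would apply \Cref{prop10} with $m=n=n_i$: in that case $\Refl(\bdJ_{n_i},\bdJ_{n_i})^{\Int}=\Refl(\bdJ_{n_i})'$ consists of all upper-triangular $n_i\times n_i$ matrices, and $\bdD_i$, being diagonal, is in particular upper-triangular, so $\bdD_i\in\Refl(\bdJ_{n_i})'$. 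Assembling the blocks, $\bdD$ has the block form described in \Cref{lem67}, hence $\bdD\in\Refl(\bdN)'$. I do not expect any genuine obstacle here; the only point requiring a line of care is the bookkeeping that identifies a matrix diagonal in the standard basis with a block-diagonal matrix whose diagonal blocks are themselves diagonal, and this is automatic because the subspaces $\bC^{n_i}$ are spanned by consecutive standard basis vectors.
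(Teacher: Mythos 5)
Your proof is correct and is exactly the argument the paper intends: the paper states the corollary ``immediately follows from Proposition \ref{prop10}'', implicitly using Lemma \ref{lem67} to reduce to blocks, the zero off-diagonal blocks being trivially in the reflexive covers and the diagonal (hence upper-triangular) diagonal blocks lying in $\Refl(\bdJ_{n_i})'$. You have simply written out the details the authors left implicit.
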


\section{Colineations of a nilpotent linear transformation} \label{sec04}
\setcounter{theorem}{0}

In this section we consider $\Col(N)$ for a non-zero nilpotent linear transformation $N\in \pL(\eV)$ whose nil-index is $n\geq 2$. Recall that for $0\ne x\in \eV$ we denote by $k_x$ the largest integer such that $N^{k_x}x\ne 0$.  By Lemma \ref{lem60},
for every integer $j=0,\ldots, k_x$, the interval $[\{ 0\},(N)_{N^{k_x-j}x}]$ in the lattice $\Lat(N)$ is equal to the chain
$$ \{ 0\}<(N)_{N^{k_x} x}<\cdots<(N)_{N^{k_x-j}x}.$$
If $T\in \Col(N)$, then the mapping $\Phi_T\colon \Lat(N)\to \Lat(N)$ is a lattice isomorphism, and so $\Phi_T$ preserves intervals and chains of $\Lat(N)$. This implies that for each integer $j=0,\ldots, k_x$ the interval
$[\{ 0\},T(N)_{N^{k_x-j}x}]$ is equal to the chain
$$ \{ 0\}<T(N)_{N^{k_x} x}<\cdots<T(N)_{N^{k_x-j}x}.$$

\begin{theorem} \label{theo05}
If $T\in \Col(N)$ and $0\ne x\in \eV$, then for every $j=0,\ldots, k_x$ we have
\begin{equation} \label{eq66}
 T(N)_{N^{k_x-j}x}=(N)_{TN^{k_x-j}x}=(N)_{N^{k_x-j}Tx}.
\end{equation}
\end{theorem}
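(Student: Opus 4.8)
The plan is to reduce everything to one elementary observation, namely that $T(N)_y=(N)_{Ty}$ for every vector $y\in\eV$ and every $T\in\Col(N)$, and then to read off \eqref{eq66} from the description of the interval $[\{0\},T(N)_x]$ furnished by \Cref{lem60}.

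First I would establish the auxiliary identity $T(N)_y=(N)_{Ty}$. Since $(N)_y\in\Lat(N)$ and $T\in\Col(N)$, the subspace $T(N)_y$ lies in $\Lat(N)$, and as it contains $Ty$, minimality of the cyclic subspace gives $(N)_{Ty}\subseteq T(N)_y$. Conversely, $T^{-1}\in\Col(N)$ and $(N)_{Ty}\in\Lat(N)$, so $T^{-1}(N)_{Ty}\in\Lat(N)$; this invariant subspace contains $T^{-1}(Ty)=y$, hence $(N)_y\subseteq T^{-1}(N)_{Ty}$, that is, $T(N)_y\subseteq(N)_{Ty}$. Applying this with $y=N^{k_x-j}x$, which is non-zero because $0\le j\le k_x$, immediately yields the first equality $T(N)_{N^{k_x-j}x}=(N)_{TN^{k_x-j}x}$ in \eqref{eq66}.

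It remains to identify $T(N)_{N^{k_x-j}x}$ with $(N)_{N^{k_x-j}Tx}$. By \Cref{lem60} the interval $[\{0\},(N)_x]$ is precisely the chain \eqref{eq64}, whose members have dimensions $0,1,\dots,k_x+1$. Since $\Phi_T$ is an automorphism of $\Lat(N)$, it carries $[\{0\},(N)_x]$ onto $[\{0\},T(N)_x]$, which is therefore again a finite chain; as $T$ is invertible, its member $T(N)_{N^{k_x-j}x}$ has dimension $j+1$. On the other hand, the auxiliary identity with $y=x$ gives $T(N)_x=(N)_{Tx}$, so \Cref{lem60} applied to the vector $Tx$ describes the \emph{same} interval $[\{0\},T(N)_x]=[\{0\},(N)_{Tx}]$ as the chain $\{0\}<(N)_{N^{k_{Tx}}Tx}<\dots<(N)_{Tx}$; comparing the dimensions of the top members forces $k_{Tx}=k_x$, whence the member of this chain of dimension $j+1$ is $(N)_{N^{k_x-j}Tx}$. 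A finite chain has a unique member of each dimension occurring in it, so $T(N)_{N^{k_x-j}x}=(N)_{N^{k_x-j}Tx}$, and combining with the first equality completes the proof.

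The one step deserving attention is this final identification. Since $T$ need not commute with $N$, there is no formal reason for $(N)_{TN^{k_x-j}x}$ and $(N)_{N^{k_x-j}Tx}$ to coincide; the point is that, once the auxiliary identity exhibits $[\{0\},T(N)_x]$ simultaneously as the $\Phi_T$-image of \eqref{eq64} and as the \Cref{lem60}-chain of the cyclic vector $Tx$, each of these two cyclic subspaces is seen to be the unique member of that chain of dimension $j+1$. Everything else is routine dimension bookkeeping.
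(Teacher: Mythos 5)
Your proof is correct, and for the first equality in \eqref{eq66} it takes a genuinely different and considerably shorter route than the paper. The paper establishes $T(N)_{N^{k_x-j}x}=(N)_{TN^{k_x-j}x}$ by induction on $j$, with a fairly involved inductive step (producing polynomials $p_r$ and a scalar $\gamma$, letting $r$ exceed the nil-index to force $\gamma=0$, and then arguing inside the cycle to pin down the index). You instead observe that for \emph{every} vector $y$ one has $T(N)_y=(N)_{Ty}$, and this follows from just two facts: $(N)_{Ty}$ is the smallest $N$-invariant subspace containing $Ty$ while $T(N)_y$ is an $N$-invariant subspace containing $Ty$ (one inclusion), and $T^{-1}\in\Col(N)$ so that $T^{-1}(N)_{Ty}$ is an $N$-invariant subspace containing $y$, whence $(N)_y\subseteq T^{-1}(N)_{Ty}$ (the other). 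This clean use of the minimality of cyclic subspaces together with the closure of $\Col(N)$ under inverses makes the induction unnecessary, and it proves the identity for all $y$ at once rather than only along the chain generated by $x$. For the second equality your argument --- identify $[\{0\},T(N)_x]$ with the chain of \Cref{lem60} for the vector $Tx$, deduce $k_{Tx}=k_x$ by comparing dimensions of the top members, and match the unique members of dimension $j+1$ in that chain --- is essentially the paper's own argument, only spelled out in more detail; the facts you rely on ($\dim T\eM=\dim\eM$ for invertible $T$, and that distinct members of a chain of subspaces have distinct dimensions) are immediate. I see no gaps.
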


\begin{proof}
We will first prove by induction that $T(N)_{N^{k_x-j}x}=(N)_{TN^{k_x-j}x}$ for every $j=0,\ldots, k_x$. Let us first consider the case $j=0$.
Since $N^{k_x+1}x=0$ the cyclic subspace $(N)_{N^{k_x} x}$ is one-dimensional and so $(N)_{N^{k_x} x}=[N^{k_x} x]$. It follows that $T(N)_{N^{k_x} x}=T[ N^{k_x} x]=[ TN^{k_x} x]\in \Lat(N)$.
Hence, $N[ TN^{k_x} x]\subseteq [ TN^{k_x} x]$. Since $0$ is the only eigenvalue of $N$ we have
\begin{equation} \label{eq65}
NTN^{k_x} x=0.
\end{equation}
This gives that $(N)_{TN^{k_x} x}$ is one-dimensional. Since $[ TN^{k_x} x]\subseteq (N)_{TN^{k_x} x}$ and both subspaces
are one-dimensional we have $T(N)_{N^{k_x} x}=(N)_{TN^{k_x} x}$. This concludes the proof for $j=0$.

Assume now that $T(N)_{N^{k_x-j}x}=(N)_{TN^{k_x-j}x}$ for some index $0\leq j< k_x$. We claim that  $ (N)_{TN^{k_x-(j+1)}x}\subseteq T (N)_{N^{k_x-(j+1)}x}. $
From equalities
$$ (N)_{N^{k_x-(j+1)}x}=\bigvee\{ N^{k_x} x,\ldots,N^{k_x-j}x,N^{k_x-(j+1)}x\}=(N)_{N^{k_x-j}x}\vee [N^{k_x-(j+1)}x] $$
and $(N)_{N^{k_x-j}x}\land [N^{k_x-(j+1)}x]=\{ 0\}$ we conclude
\begin{equation}\label{eq5555}
T(N)_{N^{k_x-(j+1)}x}=T(N)_{N^{k_x-j}x}\vee T[N^{k_x-(j+1)}x]=(N)_{TN^{k_x-j}x}\vee [TN^{k_x-(j+1)}x]
\end{equation}
and $(N)_{TN^{k_x-j}x}\land [TN^{k_x-(j+1)}x]=\{ 0\}$. From $T(N)_{N^{k_x-(j+1)}x}\in \Lat(N)$ we first conclude
$ NTN^{k_x-(j+1)}x\in T(N)_{N^{k_x-(j+1)}x}$ and then by applying equality \eqref{eq5555} we argue that there exists a polynomial $p_1$ and a number $\gamma$
such that
$$ NTN^{k_x-(j+1)}x=p_1(N)TN^{k_x-j}x+\gamma TN^{k_x-(j+1)}x. $$
By an induction argument we can prove that for every positive integer $r$ we have
$$ N^rTN^{k_x-(j+1)}x=p_r(N)TN^{k_x-j}x+\gamma^r TN^{k_x-(j+1)}x$$ 
for some suitable polynomial $p_r$. If $r$ is greater than the nil-index of $N$, we have
$$p_r(N)TN^{k_x-j}x+\gamma^r TN^{k_x-(j+1)}x=0,$$
and since $0\ne TN^{k_x-(j+1)}x\not\in (N)_{TN^{k_x-j}x}$, we conclude $\gamma=0$. This yields
$NTN^{k_x-(j+1)}x\in (N)_{TN^{k_x-j}x}$ from where we conclude that
$$ N^rTN^{k_x-(j+1)}x\in (N)_{TN^{k_x-j}x}$$ 
for every $r\geq 1$
which means that
$$ (N)_{TN^{k_x-(j+1)}x}\subseteq (N)_{TN^{k_x-j}x}\vee [TN^{k_x-(j+1)}x].$$
Hence, for an arbitrary vector $q(N)TN^{k_x-(j+1)}x\in (N)_{TN^{k_x-(j+1)}x}$ we have
$$q(N)TN^{k_x-(j+1)}x=s(N)TN^{k_x-j}x+\delta TN^{k_x-(j+1)}x\in (N)_{TN^{k_x-j}x}\vee [TN^{k_x-(j+1)}x],$$
for some polynomial $s$ and some number $\delta$.
From $(N)_{TN^{k_x-j}x}=T(N)_{N^{k_x-j}x}$ we obtain $s(N)TN^{k_x-j}x=T t(N)N^{k_x-j}x$ for
some polynomial $t$, and so
\begin{equation*}
\begin{split}
q(N)&TN^{k_x-(j+1)}x=T t(N)N^{k_x-j}x+\delta TN^{k_x-(j+1)}x=\\
&=T\bigl(t(N)N^{k_x-j}x+\delta N^{k_x-(j+1)}x\bigr)\in
T\bigl( (N)_{N^{k_x-j}x}\vee [N^{k_x-(j+1)}x]\bigr)=T (N)_{N^{k_x-(j+1)}x}.
\end{split}
\end{equation*}
which proves the claim.

Since $ (N)_{TN^{k_x-(j+1)}x}\in \Lat(N)$ and $\Lat(N)$ is a cycle there exists an index $i\in\{ 0,\ldots, j+1\}$ such that
$ (N)_{TN^{k_x-(j+1)}x}=T (N)_{N^{k_x-i}x}$. In particular,
$TN^{k_x-(j+1)}x\in T (N)_{N^{k_x-i}x}$ which means that there exists a polynomial $p$ such that
$TN^{k_x-(j+1)}x=Tp(N)N^{k_x-i}x$, and consequently $N^{k_x-(j+1)}x=p(N)N^{k_x-i}x\in (N)_{N^{k_x-i}x}.$
Since $N^{k_x-(j+1)}x\not\in (N)_{N^{k_x-i}x}$ if $i<j+1$ we conclude that $i$ must be equal to $j+1$ proving the equality
$ (N)_{TN^{k_x-(j+1)}x}=T (N)_{N^{k_x-(j+1)}x}$. This proves $T(N)_{N^{k_x-j}x}=(N)_{TN^{k_x-j}x}$ for every $j=0,\ldots, k_x$.

To prove the equality $(N)_{TN^{k_x-j}x}=(N)_{N^{k_x-j}Tx}$ observe first that the first equality in \eqref{eq66} gives us  $T(N)_x=(N)_{Tx}$ for $j=k_x$. Therefore, the intervals $[\{0\},T(N)_x]$ and $[\{0\},(N)_{Tx}]$ are equal. By \Cref{lem60} they are chains of the same length and so by comparing the $j$-th $(j=0,\ldots, k_x)$ element of both chains we finally obtain $T(N)_{N^{k_x-j}x}=(N)_{N^{k_x-j}Tx}$.
\end{proof}

Next we prove that $\Col(N)\subseteq \Alg\Lat(N)'$.

\begin{proposition} \label{prop07}
If $T\in \Col(N)$, then $T\eM=\eM$ for every hyperinvariant subspace of $N$, that is, $\Col(N)\subseteq \Alg\Lat(N)'$.
\end{proposition}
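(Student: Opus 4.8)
The plan is to show that every $T\in\Col(N)$ leaves each hyperinvariant subspace $\eM$ invariant, and then upgrade ``invariant'' to ``equal'' by applying the same argument to $T^{-1}$ (which also lies in $\Col(N)$, since $\Col(N)$ is a group) and using that $\eM$ is finite-dimensional. So the crux is: if $\eM\in\Lat(N)'$ then $T\eM\subseteq\eM$.

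The key structural fact I would exploit is the well-known description of the hyperinvariant subspaces of a nilpotent transformation: $\eM\in\Lat(N)'$ if and only if, for every cyclic subspace $(N)_x\subseteq\eM$ and every $y\in\eV$ with $(N)_y$ ``dominated'' by $(N)_x$ in the sense that $k_y\le k_x$ together with the divisibility condition $N^{k_x-k_y}z = N^{?}\ldots$—more precisely, one uses that $\Lat(N)'$ is generated (as a lattice) by subspaces of the form $N^{j}\eV$ and $\eN(N^{j})$ intersected and summed, equivalently by the "staircase" subspaces $\sum_i \eN(N^{a_i})\cap N^{b_i}\eV$. However, I suspect the cleaner route, and the one consistent with the tools developed in the excerpt, is the following. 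First I would reduce to showing $T[x]\subseteq\eM$ whenever $[x]\subseteq\eM$ and $x\in\eN(N)$ is a simple vector, together with control on cyclic generators; this is because a hyperinvariant subspace, like any invariant subspace, is the join of the cyclic subspaces it contains, and by \Cref{theo05} we have $T(N)_x=(N)_{Tx}$, so it suffices to show $Tx\in\eM$ for every $x$ with $(N)_x\subseteq\eM$.

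Thus fix $x$ with $(N)_x\subseteq\eM$ and set $k=k_x$, so $N^{k}x\ne0$, $N^{k+1}x=0$. By \Cref{theo05}, $T(N)_{N^{k-j}x}=(N)_{N^{k-j}Tx}$ for all $j=0,\dots,k$; in particular $k_{Tx}=k_x=k$ and $T[N^{k}x]=[N^{k}Tx]$. The task is to show $Tx\in\eM$. Here is where hyperinvariance enters: since $\eM$ is hyperinvariant and $(N)_x\subseteq\eM$, for \emph{every} $R\in(N)'$ we have $Rx\in\eM$, and in particular $\eM$ contains $(N)_x$ together with everything forced by the commutant. The point I would push is that the condition $(N)_x\subseteq\eM$ with $\eM$ hyperinvariant pins down, via the numerical/combinatorial invariants (the Weyr or Segre characteristics), that a vector $v$ with $k_v\le k$ and lying ``in the right filtration level'' must belong to $\eM$; and \Cref{theo05} guarantees $Tx$ has exactly the same filtration data as $x$ relative to $N$ (same $k_{Tx}$, and $T$ maps the chain \eqref{eq64} for $x$ onto the chain \eqref{eq64} for $Tx$). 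One then argues: writing $\eM$ as a sum of cyclic subspaces $(N)_{x_1},\dots,(N)_{x_p}$, the vector $x$ decomposes accordingly, and applying $T$ cyclic-block by cyclic-block—using $T(N)_{x_l}=(N)_{Tx_l}$ from \Cref{theo05} and the fact that hyperinvariance is detected by $\Lat$ over the commutant—forces $Tx\in\sum_l (N)_{Tx_l}$, which after a second application of the argument with $T^{-1}$ is shown to equal $\eM$.

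The main obstacle, as I see it, is bridging from ``$T$ preserves every cyclic subspace $(N)_x$, sending it to $(N)_{Tx}$'' (which \Cref{theo05} gives cleanly) to ``$T$ preserves hyperinvariant subspaces'': a hyperinvariant subspace is a \emph{particular} join of cyclics, and while $T$ respects each cyclic summand, it need not a priori respect the specific combination without extra input. I expect this gap to be closed by invoking the lattice-theoretic characterization of $\Lat(N)'$ inside $\Lat(N)$—namely that $\eM\in\Lat(N)'$ exactly when $\eM$ is a join of cyclic subspaces that is closed under the ``same-height'' comparison of cyclics—and then observing that since $\Phi_T$ is a lattice automorphism of $\Lat(N)$ mapping cyclics to cyclics of the same height (again by \Cref{theo05}, which preserves the lengths of the chains \eqref{eq64}), $\Phi_T$ must send this distinguished sublattice $\Lat(N)'\subseteq\Lat(N)$ onto itself. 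That is, the heart of the matter is recognizing $\Lat(N)'$ as an automorphism-invariant (intrinsic) sub-poset of $\Lat(N)$, characterized purely by the chain/interval data that \Cref{theo05} shows $\Phi_T$ respects; once that is in hand, $T\eM\in\Lat(N)'$ and $T\eM\subseteq\eM$ for a hyperinvariant $\eM$ follow, and equality comes from applying this to $T^{-1}$ together with finite-dimensionality.
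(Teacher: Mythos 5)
Your proposal does not arrive at a proof; it identifies the right ingredients and then stops short of the decisive computation. You correctly single out \Cref{theo05} as the key tool, and you even mention in passing the right reduction --- that $\Lat(N)'$ is generated, as a lattice, by the kernels and ranges of the powers $N^j$ (this is \cite[Theorem 1]{FHL}, and is exactly what the paper uses) --- but you then set that route aside as not ``the cleaner one'' and never carry out an alternative. With that reduction in hand the proof is short and concrete: for $0\ne x\in\eN(N^j)$ one has $k_x<j$, and \Cref{theo05} together with \eqref{eq65} gives $(N)_{N^{k_x}Tx}=T(N)_{N^{k_x}x}\subseteq\eN(N)$, hence $N^{j}Tx=0$; for $x=N^jy\in\eR(N^j)$ one has $Tx\in T(N)_{N^jy}=(N)_{N^jTy}\subseteq\eR(N^j)$; invertibility and finite-dimensionality then upgrade both inclusions to equalities. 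Nothing in your write-up performs a verification of this kind: the middle paragraph about ``filtration data'' and decomposing $x$ ``cyclic-block by cyclic-block'' describes an intended argument rather than giving one, and, as you yourself concede, knowing that $T$ carries each cyclic subspace $(N)_x$ onto $(N)_{Tx}$ does not by itself control how $T$ acts on a particular join of cyclics.

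The strategy you finally commit to is, moreover, logically insufficient even if it could be executed. Showing that $\Phi_T$ maps the sub-poset $\Lat(N)'\subseteq\Lat(N)$ onto itself (because it is ``intrinsically characterized'' by chain and interval data) would only prove that $T$ \emph{permutes} the hyperinvariant subspaces, whereas the proposition asserts that $T$ \emph{fixes} each of them. The step ``$T\eM\in\Lat(N)'$, hence $T\eM\subseteq\eM$'' is a non sequitur: distinct hyperinvariant subspaces of a nilpotent transformation can have the same dimension and be incomparable --- for $\bdN=\bdJ_3\oplus\bdJ_1$ the subspaces $\eN(\bdN)$ and $\eR(\bdN)$ are both two-dimensional and distinct, and the abstract lattice $\Lat(\bdN)'$ admits an automorphism interchanging them --- so a dimension-preserving bijection of $\Lat(N)'$ need not be the identity. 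To close the argument you must return to the generators $\eN(N^j)$ and $\eR(N^j)$ and check directly, via \Cref{theo05}, that $T$ maps each of them into itself.
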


\begin{proof}
By \cite[Theorem 1]{FHL}, $\Lat(N)'$ is a sublattice generated by those subspaces that are either the kernel or the range of an operator of the form $p(N)$ where $p$ is an arbitrary polynomial. Since every polynomial $p$ satisfies $p(z)=z^jq(z)$ for some integer $j\geq 0$ and some polynomial $q$ with $q(0)\neq 0$, from the fact that $q(N)$ is invertible, it follows that it suffices to see that
$T\eN(N^j)=\eN(N^j)$ and $T\eR(N^j)=\eR(N^j)$ for every integer $j\geq 0$.
If $j=0$ or $j\geq n$, then this trivially holds, so we only need to consider the case $1\leq j<n$.

Let $0\ne x\in \eN(N^j)$. Since $k_x$ is the largest integer such that $N^{k_x}x\ne 0$ we have $k_x<j$. The equality  $T(N)_{N^{k_x} x}=(N)_{N^{k_x} Tx}$ (see \Cref{theo05}) and formula \eqref{eq65} yield that $(N)_{N^{k_x} Tx}$ is a subspace of $\eN(N)$. In particular, we have $N^j Tx=0$ as $k_x+1\leq j$, so that $T\eN(N^j)\subseteq \eN(N^j)$. Since $T$ is invertible and $\eN(N^j)$ is finite-dimensional we have $T\eN(N^j)=\eN(N^j)$.

Let $0\ne x\in \eR(N^j)$ and let $y\in \eV$ be such that $x=N^j y$. It follows that $N^{k_x+j}y\ne 0$ and $N^{k_x+j+1}y=0$. Since $1\leq j\leq k_x+j$
we can apply \Cref{theo05} to get $T(N)_x=T(N)_{N^j y}=(N)_{N^j Ty}$. Hence, $Tx\in (N)_{N^j Ty}$
which means that there is a polynomial $p$ such that $Tx=p(N)N^jTy=N^jp(N)Tx\in \eR(N^j)$. As above it follows that
$T\eR(N^j)=\eR(N^j)$.
\end{proof}

\begin{corollary} \label{cor02}
If $T\in \Col(N)$, then for every $x\in \eV$ there exists $B_x\in (N)'$ such that $Tx=B_x x$.
\end{corollary}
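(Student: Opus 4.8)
The plan is to obtain the corollary as a purely formal consequence of \Cref{prop07} and \Cref{lem65}, with no new computation required. First I would invoke \Cref{prop07} to conclude that $T\in\Alg\Lat(N)'$. Next I would observe that the commutant $(N)'$ is a subalgebra of $\pL(\eV)$ containing the identity transformation $I$, so that the second assertion of \Cref{lem65} applies and yields $\Alg\Lat(N)'=\Refl(N)'$. Combining these two facts gives $T\in\Refl(N)'$.

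The last step is simply to unwind the definition \eqref{eq75} of the reflexive cover: membership $T\in\Refl(N)'$ says precisely that for every $x\in\eV$ there exists $S_x\in(N)'$ with $Tx=S_x x$, which is exactly the assertion of the corollary (with $B_x=S_x$).

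There is essentially no obstacle here; the only point that deserves a word of care is the hypothesis of \Cref{lem65} that the algebra in question be unital, which is satisfied since $I\in(N)'$. As an alternative, one could argue directly without reference to \Cref{lem65}: for a fixed $x\in\eV$ the subspace $(N)'x$ is invariant under every operator of $(N)'$ (because $(N)'$ is an algebra), hence is a hyperinvariant subspace of $N$ containing $x$; \Cref{prop07} then gives $T\bigl((N)'x\bigr)=(N)'x$, so $Tx\in(N)'x$, i.e.\ $Tx=B_x x$ for some $B_x\in(N)'$. Either route delivers the statement in a couple of lines.
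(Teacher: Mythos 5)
Your proposal is correct, and your ``alternative'' argument via the hyperinvariant subspace $(N)'x$ is verbatim the paper's own proof; the first route through \Cref{lem65} is just a repackaging of the same idea, since the proof of that lemma's second assertion is exactly the observation that $\pA x\in\Lat\pA$ for a unital algebra $\pA$. Nothing is missing.
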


\begin{proof}
Let $T\in \Col(N)$. Since for every $x\in \eV$ the subspace $(N)_{x}^{\prime}=\{ Bx;\; B\in (N)'\}$ is hyperinvariant for $N$, by Proposition \ref{prop07} we conclude that $T(N)_{x}^{\prime}=(N)_{x}^{\prime}$. Since $x\in (N)_{x}^{\prime}$ we
have $Tx=B_x x$ for some $B_x\in (N)'$.
\end{proof}

\begin{theorem} \label{theo07}
Let $T\in \pL(\eV)^{-1}$. Then $T\in \Col(N)$ if and only if
\begin{equation} \label{eq76}
\text{for every}\quad x\in \eV\quad \text{there exists}\quad C_x\in (N)'\quad \text{such that}\quad T(N)_x=C_x(N)_{x}.
\end{equation}
\end{theorem}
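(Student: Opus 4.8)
The plan is to prove the two implications separately, using the machinery developed so far. Suppose first that $T\in\Col(N)$. By \Cref{cor02}, for every $x\in\eV$ there exists $B_x\in(N)'$ with $Tx=B_xx$. The natural candidate for $C_x$ is $B_x$ itself, so I would need to check that $T(N)_x=B_x(N)_x$. Now $B_x(N)_x=(N)_{B_xx}=(N)_{Tx}$ since $B_x$ commutes with $N$, and by \Cref{theo05} (taking $j=k_x$, i.e.\ the cyclic subspace generated by $x$ itself) we have exactly $T(N)_x=(N)_{Tx}$. Hence $T(N)_x=(N)_{Tx}=(N)_{B_xx}=B_x(N)_x$, and \eqref{eq76} holds with $C_x=B_x$.

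For the converse, assume \eqref{eq76} holds. By \Cref{lem20} it suffices to show $T(N)_x\in\Lat(N)$ for every $x\in\eV$. But $T(N)_x=C_x(N)_x$ for some $C_x\in(N)'$, and $(N)_x\in\Lat(N)$, so $C_x(N)_x=N'(N)_x$-stable: more precisely, since $C_x$ commutes with $N$ and $(N)_x$ is $N$-invariant, the image $C_x(N)_x$ is again $N$-invariant because $N\,C_x(N)_x=C_x\,N(N)_x\subseteq C_x(N)_x$. Thus $T(N)_x\in\Lat(N)$ for every $x$, and \Cref{lem20} gives $T\in\Col(N)$.

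The only genuinely substantive point is the first direction, and there the work has essentially already been done in \Cref{cor02} and \Cref{theo05}; the argument is really just assembling those two facts with the observation that $B_x(N)_x=(N)_{B_xx}$ for $B_x$ in the commutant. The converse direction is a routine application of \Cref{lem20} together with the elementary fact that an operator commuting with $N$ maps $N$-invariant subspaces to $N$-invariant subspaces. I do not anticipate a real obstacle here: the theorem is a repackaging of the structural results \Cref{cor02} and \Cref{theo05} into a clean criterion, and the proof should be short. If anything, the one place to be slightly careful is to make sure \eqref{eq66} is invoked with the correct index $j=k_x$ so that it yields precisely $T(N)_x=(N)_{Tx}$ and not merely a statement about a proper subspace of $(N)_x$.
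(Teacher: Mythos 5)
Your proof is correct and follows essentially the same route as the paper: the forward direction assembles \Cref{cor02} and \Cref{theo05} (with $j=k_x$) exactly as the paper does, using $C_x(N)_x=(N)_{C_xx}$, and the converse reduces to \Cref{lem20} via the observation that $C_x(N)_x$ is $N$-invariant (the paper phrases this as $C_x(N)_x=(N)_{C_xx}\in\Lat(N)$, which is the same fact).
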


\begin{proof}
If $T\in \Col(N)$, then, by Corollary \ref{cor02}, for every $x\in \eV$ there exists $C_x\in (N)'$ such that $Tx=C_x x$. Hence, by Theorem \ref{theo05} we have
$$T(N)_x=(N)_{Tx}=(N)_{C_x x}=\{ p(N)C_x x;\; p\in \bC[z]\}=\{C_x p(N)x;\; p\in \bC[z]\}=
C_x(N)_x.$$ For the proof of the opposite implication observe first that for each $x\in \eV$ we have
$T(N)_x=C_x (N)_x=(N)_{C_x x}\in \Lat(N)$ and then apply \Cref{lem20}.
\end{proof}

It should be clear that for $T\in \pL(\eV)^{-1}$ condition \eqref{eq76} implies that
\begin{equation} \label{eq77}
\text{for every}\quad x\in \eV \quad \text{there exists}\quad B_x\in (N)'\quad \text{such that}\quad Tx=B_x x.
\end{equation}
Since $\Refl(N)'=\Alg\Lat(N)'$ by \Cref{lem65}, we obtain once more the set inclusion $\Col(N)\subseteq  \Alg\Lat(N)'$. The remaining part of this section is devoted to the characterization (see \Cref{theo01}) of those nilpotent linear transformations $N$ which satisfy $\Col(N)=\Alg\Lat(N)'$, that is, for which nilpotent linear transformations $N$ conditions \eqref{eq76} and \eqref{eq77} are equivalent.

Let $0\ne N\in \pL(\eV)$ be a nilpotent linear transformation and let $N|_{\eR(N)}$ be the restriction of $N$ to the range $\eR(N)$. It is easy to see that $\Lat(N|_{\eR(N)})\subseteq \Lat(N)$ and that for each $\eM\in \Lat(N|_{\eR(N)})$ the preimage $N^{-1}(\eM)$ is also invariant under $N$. In fact,  by \cite[Theorem 7]{BF},
\begin{equation} \label{eq51}
\Lat(N)=\bigcup_{\eM\in \Lat(N|_{\eR(N)})}\bigl[ \eM, N^{-1}(\eM)\bigr]
\end{equation}
where $[ \eM, N^{-1}(\eM)\bigr]$ is an interval in the lattice of all subspaces of $\eL(\eV)$.

\begin{proposition} \label{prop03}
If $T\in \Alg\Lat(N)'$ is invertible and $\eM$ is a hyperinvariant subspace of $N$, then $T\bigl([\eM,N^{-1}(\eM)]\bigr)=[\eM,N^{-1}(\eM)]$.
\end{proposition}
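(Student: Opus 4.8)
The plan is to reduce the statement to two elementary facts: an invertible $T\in\Alg\Lat(N)'$ maps every hyperinvariant subspace of $N$ onto itself (and so does $T^{-1}$), and $N^{-1}(\eM)$ is hyperinvariant whenever $\eM$ is. Granting these, the claimed equality becomes a purely order-theoretic statement obtained by applying monotonicity of $\eP\mapsto T\eP$ at the two endpoints of the interval, and then running the same argument for $T^{-1}$.

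First I would record the two facts. If $\eK\in\Lat(N)'$, then $T\eK\subseteq\eK$ by the very definition of $\Alg\Lat(N)'$; since $\eK$ is finite-dimensional and $T$ is injective, $\Dim(T\eK)=\Dim\eK$, whence $T\eK=\eK$, and consequently $T^{-1}\eK=\eK$ as well, so that $T^{-1}\in\Alg\Lat(N)'$ too. Next, $N^{-1}(\eM)=\{v\in\eV:Nv\in\eM\}$ is a subspace; it contains $\eM$ because $N\in(N)'$ forces $N\eM\subseteq\eM$; and for $B\in(N)'$ and $v\in N^{-1}(\eM)$ one has $N(Bv)=B(Nv)\in B\eM\subseteq\eM$, so $Bv\in N^{-1}(\eM)$. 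Hence $N^{-1}(\eM)\in\Lat(N)'$.

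Now apply the first fact to the hyperinvariant subspaces $\eM$ and $N^{-1}(\eM)$: this gives $T\eM=\eM$ and $T\bigl(N^{-1}(\eM)\bigr)=N^{-1}(\eM)$, and likewise $T^{-1}\eM=\eM$ and $T^{-1}\bigl(N^{-1}(\eM)\bigr)=N^{-1}(\eM)$. For the inclusion $T\bigl([\eM,N^{-1}(\eM)]\bigr)\subseteq[\eM,N^{-1}(\eM)]$, take any $\eP$ with $\eM\subseteq\eP\subseteq N^{-1}(\eM)$; then $\eM=T\eM\subseteq T\eP\subseteq T\bigl(N^{-1}(\eM)\bigr)=N^{-1}(\eM)$, that is, $T\eP\in[\eM,N^{-1}(\eM)]$. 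For the reverse inclusion, given $\eQ\in[\eM,N^{-1}(\eM)]$, set $\eP=T^{-1}\eQ$; the same reasoning applied to $T^{-1}$ shows $\eM\subseteq\eP\subseteq N^{-1}(\eM)$, and $T\eP=\eQ$, so $\eQ\in T\bigl([\eM,N^{-1}(\eM)]\bigr)$. Combining the two inclusions yields the asserted equality.

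There is no genuinely hard step here. The only point calling for a little care is that $\Alg\Lat(N)'$ is merely an algebra and not a priori a group, so one cannot quote a group structure; instead, finite-dimensionality must be used to upgrade ``$T$ maps $\eK$ into $\eK$'' to ``$T$ maps $\eK$ onto $\eK$'' and, at the same time, to place $T^{-1}$ back inside $\Alg\Lat(N)'$. Everything after that is immediate from the fact that $\eM$ and $N^{-1}(\eM)$ are both hyperinvariant.
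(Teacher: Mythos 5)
Your proof is correct and follows essentially the same route as the paper's: both rest on the two observations that an invertible $T\in\Alg\Lat(N)'$ fixes every hyperinvariant subspace (hence fixes $\eM$) and that $N^{-1}(\eM)$ is again hyperinvariant via $NCx=CNx\in\eM$. The only cosmetic difference is that you verify the interval identity $T\bigl([\eM,N^{-1}(\eM)]\bigr)=[T\eM,TN^{-1}(\eM)]$ by the two monotonicity inclusions (and you make explicit the finite-dimensionality step upgrading $T\eK\subseteq\eK$ to $T\eK=\eK$, which the paper leaves implicit), whereas the paper simply cites that $\Phi_T$ is a lattice isomorphism preserving intervals.
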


\begin{proof}
Since $T$ is invertible, $\Phi_T\colon \fL\to \fL$ is a lattice isomorphism. In particular, $\Phi_T$ preserves intervals of $\fL$. Hence, for each subspace $\eM$ of $\eV$ we have
$T\bigl([\eM,N^{-1}(\eM)]\bigr)=[T\eM,TN^{-1}(\eM)]$.
If, in addition,  $\eM$ is hyperinvariant for $N$, then $T\eM=\eM$ as $T\in \Alg\Lat(N)'$. To finish the proof we need to show that $N^{-1}(\eM)$ is also hyperinvariant for $N$. To this end, pick $x\in N^{-1}(\eM)$
and $C\in (N)'$. Then $NCx=CNx\in \eM$, so that $Cx\in N^{-1}(\eM)$.
\end{proof}

If $0\ne N\in \pL(\eV)$ is a nilpotent linear transformation such that every $\eM\in \Lat(N|_{\eR(N)})$ is hyperinvariant
for $N$, then it follows, by \eqref{eq51} and Proposition \ref{prop03}, that $T\in \Col(N)$ if  $T$ is invertible and $T\in \Alg\Lat(N)'$.

Let $T\in (N)'$. Then $\eR(N)$ is an invariant subspace for $T$, and therefore, the restriction $T|_{\eR(N)}$ of $T$ is a linear transformation on $\eR(N)$.
Since $T\in (N)'$, it should be clear that $T|_{\eR(N)}$ and $N|_{\eR(N)}$ commute. Hence, every hyperinvariant subspace for
$N|_{\eR(N)}$ is also hyperinvariant for $N$. By \cite[Theorem]{Ong} every
invariant subspace of $N|_{\eR(N)}$ is hyperinvariant for $N|_{\eR(N)}$ if and only if the Jordan decomposition
of $N|_{\eR(N)}$ consists of a single Jordan block.

Next we determine the group of invertible matrices in $\Col(N)'$ for nilpotent matrices $\bdJ_d$ and $\bdJ_k\oplus \bdo_l$.

\begin{example} \label{ex01}
(1) If $\bdN=\bdJ_d$, then $\eR(\bdN)=\bigvee\{ \bde_1,\ldots,\bde_{d-1}\}$ and $\bdN|_{\eR(\bdN)}=\bdJ_{d-1}$.
Since $(\bdJ_d)=(\bdJ_d)'$ we have $\Lat(\bdJ_d)=\Lat(\bdJ_d)'=\{\eM_j;\; 0\leq j\leq d\}$ where $\eM_0=\{0\}$ and
$\eM_j=\bigvee\{ \bde_1,\ldots,\bde_j\}$ for $1\leq j\leq d$. From this it easily follows that
$\Alg \Lat (\bdJ_d)'=\Alg \Lat (\bdJ_d)$ is precisely the algebra of all upper-triangular matrices. Thus, $\bdT\in \bM_{d\times d}$ is in
$\Col(\bdJ_d)$ if and only if it is an invertible upper-triangular matrix. Hence, $\Col(\bdJ_d)=\bigl(\Alg\Lat(\bdJ_d)'\bigr)^{-1}$.

(2) Let now $d\geq 3$ and let $k\geq 2, l\geq 1$ be such that $k+l=d$.
Denote $ \bdN=\bdJ_k\oplus \bdo_l$. Then it is clear that $(\bdN)=(\bdJ_k)\oplus (\bdo)$ and that
for an arbitrary vector $\bdx\oplus\bdy\in \bC^k\oplus\bC^l$ we have $\bdN(\bdx\oplus\bdy)=\bdJ_k\bdx\oplus \bdo$.
Hence, $\eN(\bdN)=\eN(\bdJ_k)\oplus \bC^l=\bigvee\{ \bde_1,\bde_{k+1},\ldots,\bde_d\}$ and $\eR(\bdN)=\eR(\bdJ_k)\oplus \{ \bdo\}=\bigvee\{ \bde_1,\ldots,\bde_{k-1}\}$.
Since $\bdN \bde_j=\bde_{j-1}$, for $j=2, \ldots, k-1$, and $\bdN \bde_1=\bdo$ we see that
$\bdN|_{\eR(\bdN)}=\bdJ_{k-1}$. Hence, the discussion preceding the example shows that $\Col(\bdN)$ is equal to the group of all invertible
matrices in $\Alg\Lat(\bdN)'$.

To see explicitly which matrices are in $\Col(\bdN)$, pick $\bdT\in \bM_{d\times d}$ and let $\bdT=\left[\begin{smallmatrix} \bdT_{11} & \bdT_{12}\\
\bdT_{21} & \bdT_{22} \end{smallmatrix}\right]$ be its block operator matrix with respect to the decomposition $\bC^d=\bC^k\oplus \bC^l$. A direct calculation shows that
$\bdT\in (\bdN)'$ if and only if
\begin{equation*}
\bdT_{11}=\left[ \begin{smallmatrix} \tau_1 & \tau_2 & \cdots & \tau_k\\ 0 & \tau_1 & \cdots & \tau_{k-1}\\
\vdots & \vdots  & \ddots & \vdots\\ 0 & 0 & \cdots & \tau_1
\end{smallmatrix}\right], \qquad
\bdT_{12}=\left[ \begin{smallmatrix} \kappa_1 & \kappa_2 & \cdots & \kappa_l\\ 0 & 0 & \cdots & 0\\
\vdots & \vdots  &   & \vdots\\ 0 & 0 & \cdots & 0
\end{smallmatrix}\right], \qquad
\bdT_{21}=\left[ \begin{smallmatrix} 0 & \cdots & 0 & \rho_1\\ 0 & \cdots & 0 & \rho_2\\
\vdots &   & \vdots   & \vdots\\ 0 & \cdots & 0 & \rho_l
\end{smallmatrix}\right]
\end{equation*}
and $\bdT_{22}\in \bM_{l\times l}$ is arbitrary. This implies (see Proposition \ref{prop10})
 that $\bdT\in\Col(\bdN)$ if and only if
\begin{equation*}
\bdT_{11}=\left[ \begin{smallmatrix} \tau_{11} & \tau_{12} & \cdots & \tau_{1k}\\ 0 & \tau_{22} & \cdots & \tau_{2\,k}\\
\vdots & \vdots  & \ddots & \vdots\\ 0 & 0 & \cdots & \tau_{kk}
\end{smallmatrix}\right], \;
\bdT_{12}=\left[ \begin{smallmatrix} \kappa_1 & \kappa_2 & \cdots & \kappa_l\\ 0 & 0 & \cdots & 0\\
\vdots & \vdots  &   & \vdots\\ 0 & 0 & \cdots & 0
\end{smallmatrix}\right], \;
\bdT_{21}=\left[ \begin{smallmatrix} 0 & \cdots & 0 & \rho_1\\ 0 & \cdots & 0 & \rho_2\\
\vdots &   & \vdots   & \vdots\\ 0 & \cdots & 0 & \rho_l
\end{smallmatrix}\right], \;
\bdT_{22}=\left[ \begin{smallmatrix} \theta_{11} & \theta_{12} & \cdots & \theta_{1l}\\ \theta_{21} & \theta_{22} & \cdots & \theta_{2\,l}\\
\vdots & \vdots  & \ddots & \vdots\\ \theta_{l1} & \theta_{l2} & \cdots & \theta_{ll}
\end{smallmatrix}\right]
\end{equation*}
and $\det(\bdT)=\tau_{11}\cdots\tau_{kk}\cdot\det(\bdT_{22})\ne 0$.
\end{example}

The situation completely changes whenever a given nilpotent matrix has more than one Jordan block of dimension at least two $2$.

\begin{theorem} \label{theo01}
Let $ \bdN=\bdJ_{n_{1}}\oplus\cdots\oplus \bdJ_{n_{k}}\in \bM_{n\times n}$ be a nilpotent matrix.
The group $\Col(\bdN)$ is a proper subgroup of $\bigl(\Alg\Lat(\bdN)'\bigr)^{-1}$ if and only if at least two Jordan
blocks are of dimension $2$ or more.
\end{theorem}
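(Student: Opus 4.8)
The statement is an \emph{if and only if}, and I would prove the two implications separately. Suppose first that at most one Jordan block of $\bdN$ has dimension $\ge2$. Since $\bdN$ is non-zero with nil-index $n\ge2$, this forces exactly one block to have dimension $n$ and all remaining blocks (if any) to have dimension $1$, so $\bdN$ is similar to $\bdJ_n$ or to $\bdJ_n\oplus\bdo_l$ for some $l\ge1$. For each of these two matrices \Cref{ex01} gives $\Col(\cdot)=\bigl(\Alg\Lat(\cdot)'\bigr)^{-1}$; since a similarity transports $\Lat$ and $\Lat(\cdot)'$, and hence both $\Col$ and $\Alg\Lat(\cdot)'$, by conjugation, it follows that $\Col(\bdN)=\bigl(\Alg\Lat(\bdN)'\bigr)^{-1}$, that is, the inclusion is not proper. (Alternatively, one can bypass \Cref{ex01}: in this case $\bdN|_{\eR(\bdN)}$ is a single Jordan block, so every invariant subspace of $\bdN|_{\eR(\bdN)}$ is hyperinvariant for $\bdN$, and the claim follows from the discussion preceding \Cref{ex01} together with \Cref{prop07}.)

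For the converse I would construct an explicit invertible $\bdT\in\Alg\Lat(\bdN)'\setminus\Col(\bdN)$. Among the (at least two) Jordan blocks of dimension $\ge2$, fix two of them, say the $i$-th and the $j$-th, with $i\ne j$ and $n_i\le n_j$, and write $\bde^{(i)}_1,\dots,\bde^{(i)}_{n_i}$ and $\bde^{(j)}_1,\dots,\bde^{(j)}_{n_j}$ for the standard bases of the corresponding summands $\bC^{n_i},\bC^{n_j}$ of $\bC^n$, so that $\bdN\bde^{(i)}_t=\bde^{(i)}_{t-1}$ for $t\ge2$, $\bdN\bde^{(i)}_1=\bdo$, and likewise for $j$; since $n_i,n_j\ge2$, the vectors $\bde^{(i)}_1,\bde^{(i)}_2,\bde^{(j)}_1,\bde^{(j)}_2$ are four linearly independent standard basis vectors of $\bC^n$. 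Let $\bdR\in\bM_{n\times n}$ be the rank-one matrix whose only nonzero block is the $(j,i)$-block, set equal to $\bdE_{2,2}\in\bM_{n_j\times n_i}$ (equivalently, $\bdR\bde^{(i)}_2=\bde^{(j)}_2$ and $\bdR$ annihilates every other standard basis vector), and put $\bdT=\bdI_n+\bdR$. Because $n_j\ge n_i\ge2$, \Cref{prop10} yields $\bdE_{2,2}\in\Refl(\bdJ_{n_j},\bdJ_{n_i})^\Int$, so $\bdR\in\Refl(\bdN)'$ by \Cref{lem67}, whence $\bdT\in\Refl(\bdN)'=\Alg\Lat(\bdN)'$ by \Cref{lem65}; and since $\bdR^2=\bdo$, the matrix $\bdT$ is invertible with inverse $\bdI_n-\bdR$. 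Thus $\bdT\in\bigl(\Alg\Lat(\bdN)'\bigr)^{-1}$. To see that $\bdT\notin\Col(\bdN)$, by \Cref{lem20} it is enough to exhibit one vector whose cyclic subspace $\bdT$ does not map into $\Lat(\bdN)$; I would take $\bdx=\bde^{(i)}_2+\bde^{(j)}_2$, so that $(\bdN)_{\bdx}=\bigvee\{\bde^{(i)}_2+\bde^{(j)}_2,\ \bde^{(i)}_1+\bde^{(j)}_1\}$. A short computation using $\bdT\bde^{(i)}_2=\bde^{(i)}_2+\bde^{(j)}_2$, $\bdT\bde^{(j)}_2=\bde^{(j)}_2$, $\bdT\bde^{(i)}_1=\bde^{(i)}_1$ and $\bdT\bde^{(j)}_1=\bde^{(j)}_1$ gives $\bdT(\bdN)_{\bdx}=\bigvee\{\bde^{(i)}_2+2\bde^{(j)}_2,\ \bde^{(i)}_1+\bde^{(j)}_1\}$; applying $\bdN$ to its first generator produces $\bde^{(i)}_1+2\bde^{(j)}_1$, and comparing coefficients against the linearly independent vectors $\bde^{(i)}_1,\bde^{(j)}_1,\bde^{(i)}_2,\bde^{(j)}_2$ shows this vector does not lie in $\bdT(\bdN)_{\bdx}$. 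Hence $\bdT(\bdN)_{\bdx}\notin\Lat(\bdN)$, so $\bdT\notin\Col(\bdN)$ and the inclusion is proper.

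The one delicate point is the choice of the pair of blocks in the converse direction: the rank-one perturbation $\bdR$ belongs to $\Refl(\bdN)'$ precisely because its nonzero block sits in a position $(j,i)$ with $n_j\ge n_i$, since by the description of $\Refl(\bdJ_{n_j},\bdJ_{n_i})^\Int$ in \Cref{prop10} the matrix $\bdE_{2,2}$ falls outside the reflexive cover as soon as the $j$-th block is strictly smaller than the $i$-th. Once the perturbation is "pushed into the larger of the two blocks", the remaining checks — $\bdR^2=\bdo$, membership of $\bdT$ in $\Refl(\bdN)'=\Alg\Lat(\bdN)'$, and the failure of the cyclic-subspace criterion of \Cref{lem20} — are all routine.
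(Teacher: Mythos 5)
Your proposal is correct, and the interesting half — properness when two blocks have size at least $2$ — is proved by a genuinely different construction than the paper's. The paper's witness is an invertible \emph{diagonal} matrix: by \Cref{cor01} every such matrix lies in $\Alg\Lat(\bdN)'$, and the paper shows that membership in $\Col(\bdN)$ forces the constraint $\delta_2/\delta_1=\delta_{n_1+2}/\delta_{n_1+1}$ by testing against the invariant subspace spanned by $\bde_1+\bde_{n_1+1}$ and $\bde_2+\bde_{n_1+2}$ (located via the interval description \eqref{eq51}). You instead build a unipotent rank-one perturbation $\bdT=\bdI_n+\bdR$ with the single nonzero block $\bdE_{2,2}$ placed in position $(j,i)$ with $n_j\ge n_i\ge 2$, verify $\bdT\in\Refl(\bdN)'=\Alg\Lat(\bdN)'$ via \Cref{lem67}, \Cref{prop10} and \Cref{lem65}, and then show directly that $\bdT$ fails to preserve the cyclic subspace of $\bde^{(i)}_2+\bde^{(j)}_2$; your computation there checks out, as does your remark that the block must be pushed into the larger summand for $\bdE_{2,2}$ to survive in the reflexive cover. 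The two approaches are essentially dual: both test the same kind of ``diagonal'' two-dimensional invariant subspace, the paper by scaling it with a diagonal matrix, you by shearing it with an off-diagonal one; yours has the small advantage of producing one explicit counterexample and bypassing \Cref{cor01} and \eqref{eq51}, while the paper's yields a necessary condition satisfied by all diagonal collineations. Two cosmetic points: the theorem as stated allows $\bdN=\bdo$ (which the paper dispatches separately, both groups then being all of $\bM_{n\times n}^{-1}$), and your statement is implicitly relying on the section's standing assumption that $\bdN\ne\bdo$; also, in the first paragraph the unique large block has dimension equal to the nil-index, not to $n=\dim\bC^n$, so writing $\bdJ_n$ there clashes with the ambient dimension. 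Neither affects correctness.
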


\begin{proof}
If $\bdN=\bdo$, then both $\Col(\bdN)$ and $ \bigl( \Alg\Lat(\bdN)'\bigr)^{-1}$ are equal to the group of all invertible $n\times n$ matrices. Assume therefore that $\bdN\ne \bdo$, and so, at least one Jordan block
is of dimension $2$ or more. If there is exactly one Jordan block of dimension $2$ or more, then \Cref{ex01} yields that $\Col(\bdN)=\bigl(\Alg\Lat(\bdN)'\bigr)^{-1}$.

Suppose now that there are at least two Jordan blocks in the Jordan decomposition of $N$ of dimension $2$ or more. Without loss of generality we
may assume that $n_1\geq 2$ and $n_2\geq 2$. 
Since the vectors $\bde_1$ and $\bde_{n_1+1}$ are in the kernel $\eN(\bdN)$, the one-dimensional space $\eM$ spanned by $\bde_1+\bde_{n_1+1}$ is invariant under $\bdN|_{\eR(\bdN)}$. 
Let us consider the one-dimensional space
$$\eK=\{ \lambda(\bde_1+\bde_{n_1+1})+\mu(\bde_2+\bde_{n_1+2});\; \lambda,\mu \in \bC\}$$
which obviously contains $\eM$.
From $\bdN\bigl( \lambda(\bde_1+\bde_{n_1+1})+\mu(\bde_2+\bde_{n_1+2})\bigr)=\mu(\bde_1+\bde_{n_1+1})\in \eM$ we conclude $\eK\in [\eM, \bdN^{-1}(\eM)$], and so $\eK\in \Lat(\bdN)$ by \eqref{eq51}.

Let $\bdD=\diag[\delta_1,\ldots,\delta_n]$ be an invertible diagonal matrix. By Corollary \ref{cor01} it follows that $\bdD\in \Alg\Lat(\bdN)'$.
In order to $\bdD\in \Col(\bdN)$, we will prove that $\bdD$ needs  to satisfy certain conditions which are not met by all invertible diagonal matrices. This will imply that $\Col(\bdN)$ is a proper subset of $\bigl(\Alg\Lat(\bdN)'\bigr)^{-1}$ and the proof will be finished.

Suppose $\bdD\in \Col(N)$. Then $\delta_j\ne 0$ for all $j=1, \ldots n$ as $\bdD$ is invertible.
Since $\bdD\in \Col(\bdN)$ the subspace
$$\bdD\eK = \{ \lambda(\delta_1 \bde_1+\delta_{n_1+1}\bde_{n_1+1})+\mu(\delta_2\bde_2+\delta_{n_1+2}\bde_{n_1+2});\; \lambda,\mu \in \bC\}$$
is invariant under $\bdN$. Therefore, there exist $\alpha,\beta \in \bC$ such that
\begin{align*}
\delta_2\bde_1+\delta_{n_1+2}\bde_{n_1+1}&=\bdN(\delta_2\bde_2+\delta_{n_1+2}\bde_{n_1+2})\\
&=\alpha(\delta_1\bde_1+\delta_{n_1+1}\bde_{n_1+1})+\beta(\delta_2\bde_2+\delta_{n_1+2}\bde_{n_1+2}).
\end{align*}
By comparing coefficients we obtain  $\delta_2=\alpha \delta_1$,
$\delta_{n_1+2}=\alpha \delta_{n_1+1}$ and $\beta \delta_2=\beta \delta_{n_1+2}=0$. Since numbers $\delta_j$
are non-zero we have $\beta=0$ and $\alpha=\frac{\delta_2}{\delta_1}=\frac{\delta_{n_1+2}}{\delta_{n_1+1}}$.
The last equality is clealy not fulfilled by every invertible diagonal matrix from $\Alg\Lat(\bdN)'$.
\end{proof}

\section{Colineations of $\bdN=\bdJ_2\oplus\bdJ_2\in \bM_{4\times 4}$} \label{sec05}
\setcounter{theorem}{0}

The simplest example of a nilpotent matrix $\bdN$ whose group of collineations is a proper subgroup of
$\bigl(\Alg\Lat(\bdN)'\bigr)^{-1}$ is $\bdN=\bdJ_2\oplus\bdJ_2\in \bM_{4\times 4}$. In this section we
determine $\Col(\bdN)$.

First, observe that
$$(\bdN)'=\left\{ \left[ \begin{smallmatrix}
\nu_{11} & \nu_{12} & \nu_{13} & \nu_{14} \\
0 & \nu_{11} & 0 & \nu_{13} \\
\nu_{31} & \nu_{32} & \nu_{33} & \nu_{34} \\
0 & \nu_{31} & 0 & \nu_{33}
\end{smallmatrix} \right];\; \nu_{ij}\in \bC\right\}\quad \text{and}\quad
\Alg\Lat(\bdN)'=\left\{ \left[ \begin{smallmatrix}
\gamma_{11} & \gamma_{12} & \gamma_{13} & \gamma_{14} \\
0 & \gamma_{22} & 0 & \gamma_{24} \\
\gamma_{31} & \gamma_{32} & \gamma_{33} & \gamma_{34} \\
0 & \gamma_{42} & 0 & \gamma_{44}
\end{smallmatrix} \right];\; \gamma_{ij}\in \bC\right\}$$
by \Cref{prop09} and \Cref{prop10}.
The kernel and the range of $\bdN$ coincide, and both are spanned by standard basis vectors $\bde_1$ and $\bde_3$.
Hence, $\bdN|_{\eR(\bdN)}=\bdo$ from where it follows that $\Lat(\bdN|_{\eR(\bdN)})$ consists of all subspaces of $\eR(\bdN)$. Since the range $\eR{(\bdN)}$ is two-dimensional the invariant subspaces of $\bdN|_{\eR(\bdN)}=\bdo$ are the trivial subspaces $\{ \bdo\}$ and $\eR(\bdN)$ and the family of one-dimensional subspaces
\begin{equation} \label{eq69}
\eM_{\omega,\kappa}=\left\{ \lambda(\omega \bde_1+\kappa\bde_3);\quad \lambda \in \bC\right\}\qquad\qquad (\omega,\kappa\in \bC,\quad|\omega|^2+|\kappa|^2\ne 0).
\end{equation}
Since $\bdN^{-1}(\{ \bdo\})=\eN(\bdN)$ and $\bdN^{-1}(\eR(\bdN))=\bC^4$
from  \eqref{eq51} we conclude
$$ \Lat(\bdN)=\left[ \{ \bdo\},\eN(\bdN)\right]\cup \left[ \eR(\bdN),\bC^4\right]\cup \biggl(\bigcup_{\omega,\kappa\in \bC\atop |\omega|^2+|\kappa|^2\ne 0}\left[\eM_{\omega,\kappa},\bdN^{-1}(\eM_{\omega,\kappa})\right]\biggr). $$
While the interval $\left[ \{ \bdo\},\eN(\bdN)\right]$ contains all subspaces of $ \eN(\bdN) $ the interval
$\left[ \eR(\bdN),\bC^4\right]$ contains all subspaces of $\bC^4$ which contain $\eR(\bdN)$.
If $\bdT\in \Alg\Lat(\bdN)'$ is invertible, then
$ \bdT\bigl( [\{ \bdo\},\eN(\bdN)]\bigr)=  [\{ \bdo\},\eN(\bdN)]$
and $\bdT\bigl([\eR(\bdN),\bC^4]\bigr)=[\eR(\bdN),\bC^4] $
as $\{\bdo\},\eN(\bdN), \eR(\bdN)$ and $\bC^4$ are hyperinvariant subspaces of $\bdN$.

Let $\omega,\kappa\in \bC$ be such that $|\omega|^2+|\kappa|^2\ne 0$.
By a direct calculation one can verify that the preimage $\bdN^{-1}(\eM_{\omega,\kappa})$ of $\eM_{\omega,\kappa}$ equals
$$\bdN^{-1}(\eM_{\omega,\kappa})=\left\{ \lambda \bde_1+\mu \bde_3+\zeta(\omega \bde_2+\kappa \bde_4);\;\lambda,\mu,\zeta\in \bC\right\}.$$
The subspaces from the interval $\left[\eM_{\omega,\kappa},\bdN^{-1}(\eM_{\omega,\kappa})\right]$ different from
$\eM_{\omega,\kappa}$ and $\bdN^{-1}(\eM_{\omega,\kappa})$ are two-dimensional. It is not hard
to see that each of them except the kernel $\eN(\bdN)$ is of the form
$$\eM_{\omega,\kappa}^{\rho,\theta}=\left\{ \lambda(\omega \bde_1+\kappa \bde_3)+\zeta(\rho \bde_1+\omega\bde_2+\theta \bde_3+\kappa\bde_4);\quad \lambda,\zeta\in \bC\right\}$$
for some $\rho$ and $\theta\in \bC$.
For each $\bdT=\left[ \begin{smallmatrix}
\gamma_{11} & \gamma_{12} & \gamma_{13} & \gamma_{14} \\
0 & \gamma_{22} & 0 & \gamma_{24} \\
\gamma_{31} & \gamma_{32} & \gamma_{33} & \gamma_{34} \\
0 & \gamma_{42} & 0 & \gamma_{44}
\end{smallmatrix} \right]\in \Alg\Lat(\bdN)'$ we have $$\det(\bdT)=(\gamma_{11}\gamma_{33}-\gamma_{13}\gamma_{31})(\gamma_{22}\gamma_{44}-\gamma_{24}\gamma_{42}),$$
and furthemore,  if $\bdT$ is invertble, then 
\begin{equation} \label{eq68}
 \gamma_{11}\gamma_{33}-\gamma_{13}\gamma_{31}\ne 0\quad \text{and}\quad
\gamma_{22}\gamma_{44}-\gamma_{24}\gamma_{42}\ne 0.
\end{equation}

\begin{theorem} \label{theo02}
Matrix $\bdT\in \bM_{4\times 4}$ is in $\Col(\bdN)$ if and only if
$$\bdT=\left[ \begin{smallmatrix}
\gamma_{11} & \gamma_{12} & \gamma_{13} & \gamma_{14} \\
0 & t\gamma_{11} & 0 & t\gamma_{13} \\
\gamma_{31} & \gamma_{32} & \gamma_{33} & \gamma_{34} \\
0 & t\gamma_{31} & 0 & t\gamma_{33}
\end{smallmatrix} \right],$$ where $t$ and $\gamma_{ij}$ are complex numbers such that
$ t(\gamma_{11}\gamma_{33}-\gamma_{13}\gamma_{31})\ne 0$.
\end{theorem}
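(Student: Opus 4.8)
The plan is to pass to a basis exposing a $2\times 2$ block structure. Reorder the standard basis of $\bC^4$ as $\bde_1,\bde_3,\bde_2,\bde_4$ and identify $\bC^4=\bC^2\oplus\bC^2$ accordingly. In this basis $\bdN=\left[\begin{smallmatrix}\bdo&\bdI_2\\\bdo&\bdo\end{smallmatrix}\right]$, so $\eN(\bdN)=\eR(\bdN)=\bC^2\oplus\{\bdo\}$, and the description of $\Alg\Lat(\bdN)'$ preceding the theorem says precisely that a matrix lies in $\Alg\Lat(\bdN)'$ if and only if its block matrix has the form $\left[\begin{smallmatrix}\bdA&\bdB\\\bdo&\bdD\end{smallmatrix}\right]$ with $\bdA,\bdB,\bdD\in\bM_{2\times 2}$ arbitrary; the matrices listed in the theorem are exactly those with $\bdD=t\bdA$, and by \eqref{eq68} (or a one-line determinant computation, $\det\bdT=t^2(\det\bdA)^2$) the requirement $t(\gamma_{11}\gamma_{33}-\gamma_{13}\gamma_{31})\neq 0$ is nothing but invertibility of such a $\bdT$. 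I will also record that for $x=\bdv\oplus\bdw\in\bC^2\oplus\bC^2$ one has $\bdN x=\bdw\oplus\bdo$ and $\bdN^2x=\bdo$, so $(\bdN)_x=\bigvee\{\bdv\oplus\bdw,\ \bdw\oplus\bdo\}$, which is $\{\bdo\}$ or a line contained in $\eN(\bdN)$ when $\bdw=\bdo$, and is two-dimensional when $\bdw\neq\bdo$.

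For the first implication, suppose $\bdT=\left[\begin{smallmatrix}\bdA&\bdB\\\bdo&t\bdA\end{smallmatrix}\right]$ with $\det\bdT=t^2(\det\bdA)^2\neq 0$. By \Cref{lem20} it suffices to check $\bdT(\bdN)_x\in\Lat(\bdN)$ for every $x$. If $\bdw=\bdo$ this is clear, since $\bdT(\bdN)_x$ is either $\{\bdo\}$ or the line $[\bdA\bdv\oplus\bdo]\subseteq\eN(\bdN)$. If $\bdw\neq\bdo$, then $\bdT(\bdN)_x=\bigvee\{(\bdA\bdv+\bdB\bdw)\oplus t\bdA\bdw,\ \bdA\bdw\oplus\bdo\}$; applying $\bdN$ gives $\bdN\big((\bdA\bdv+\bdB\bdw)\oplus t\bdA\bdw\big)=t(\bdA\bdw\oplus\bdo)$, which lies in this span, and $\bdN(\bdA\bdw\oplus\bdo)=\bdo$, so $\bdT(\bdN)_x$ is $\bdN$-invariant. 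Hence $\bdT\in\Col(\bdN)$.

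For the converse, let $\bdT\in\Col(\bdN)$. By \Cref{prop07} we have $\bdT\in\Alg\Lat(\bdN)'$, so $\bdT=\left[\begin{smallmatrix}\bdA&\bdB\\\bdo&\bdD\end{smallmatrix}\right]$, and invertibility of $\bdT$ (equivalently \eqref{eq68}) forces $\bdA$ and $\bdD$ to be invertible. Fix a nonzero $\bdv\in\bC^2$. The cyclic subspace $(\bdN)_{\bdo\oplus\bdv}=\bigvee\{\bdv\oplus\bdo,\ \bdo\oplus\bdv\}$ is $\bdN$-invariant, so its image $\bdT(\bdN)_{\bdo\oplus\bdv}=\bigvee\{\bdA\bdv\oplus\bdo,\ \bdB\bdv\oplus\bdD\bdv\}$ lies in $\Lat(\bdN)$. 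Since $\bdD\bdv\neq\bdo$ this image is two-dimensional, and $\bdN(\bdB\bdv\oplus\bdD\bdv)=\bdD\bdv\oplus\bdo$ must belong to it; comparing the two $\bC^2$-components shows $\bdD\bdv\in[\bdA\bdv]$. Thus every nonzero vector of $\bC^2$ is an eigenvector of $\bdA^{-1}\bdD$, whence $\bdA^{-1}\bdD=t\bdI_2$ for some $t\in\bC$ (testing the two standard basis vectors of $\bC^2$ and their sum suffices), and $t\neq 0$ since $\bdD$ is invertible. Therefore $\bdD=t\bdA$, and rewriting $\bdT$ in the original basis yields the asserted form with $t(\gamma_{11}\gamma_{33}-\gamma_{13}\gamma_{31})=t\det\bdA\neq 0$.

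Both directions thus reduce to short block computations; the only step doing genuine work is the deduction in the converse that $\bdN$-invariance of $\bdT(\bdN)_{\bdo\oplus\bdv}$ for all $\bdv$ forces $\bdA^{-1}\bdD$ to fix every line and hence be scalar — this is what eliminates the four surplus parameters present in $\Alg\Lat(\bdN)'$ and pins down $\Col(\bdN)$ as the proper subgroup. The one minor point to keep track of is that these cyclic images remain two-dimensional, so the invariance condition is not vacuous; this is exactly where invertibility of $\bdA$ and $\bdD$ (equivalently of $\bdT$) is used.
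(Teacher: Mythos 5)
Your proof is correct, and while it tests invariance on the same family of two\-/dimensional invariant subspaces as the paper (your $(\bdN)_{\bdo\oplus\bdv}$ are exactly the subspaces $\eM_{\omega,\kappa}^{0,0}$ with $\bdv=(\omega,\kappa)$), the way you extract the conclusion is genuinely different and noticeably cleaner. The paper works in coordinates: invariance of $\bdT\eM_{\omega,\kappa}^{0,0}$ yields the system \eqref{eq73}, from which it deduces proportionality of the pairs $(\gamma_{22},\gamma_{24})\sim(\gamma_{11},\gamma_{13})$ and $(\gamma_{42},\gamma_{44})\sim(\gamma_{31},\gamma_{33})$ with a priori different constants $u$ and $v$, and then needs the extra identities \eqref{eq76enacba}--\eqref{eq77enacba} plus a case analysis on the vanishing of $\gamma_{11},\gamma_{33}$ to force $u=v$. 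Your reordering of the basis turns the same condition into the single statement $\bdD\bdv\in[\bdA\bdv]$ for all $\bdv\neq\bdo$, i.e.\ that every vector is an eigenvector of $\bdA^{-1}\bdD$, so $\bdD=t\bdA$ in one stroke with no cases; the invertibility of $\bdA$ and $\bdD$, which you correctly flag as the point guaranteeing the test subspaces stay two\-/dimensional (so that $\beta=0$ in the dependence relation), is exactly the paper's condition \eqref{eq68}. For sufficiency the paper computes $\bdT\eM_{\omega,\kappa}^{\rho,\theta}$ explicitly for the whole list of invariant subspaces, whereas you invoke \Cref{lem20} and check only cyclic subspaces, which again shortens the computation. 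Both routes are sound; yours buys brevity and removes the case split, at the mild cost of importing the block reformulation of $\Alg\Lat(\bdN)'$ from the discussion preceding the theorem.
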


\begin{proof}
Assume that $\bdT\in \Col(\bdN)\subseteq \bigl(\Alg\Lat(\bdN)'\bigr)^{-1}$. Then $\bdT$ is of the form
$$\bdT=\left[ \begin{smallmatrix}
\gamma_{11} & \gamma_{12} & \gamma_{13} & \gamma_{14} \\
0 & \gamma_{22} & 0 & \gamma_{24} \\
\gamma_{31} & \gamma_{32} & \gamma_{33} & \gamma_{34} \\
0 & \gamma_{42} & 0 & \gamma_{44}
\end{smallmatrix} \right]$$
with entries satisfying  $\eqref{eq68}$.

Let $(\omega,\kappa)\ne (0,0)$ be arbitrary and consider the subspace
$$\eM_{\omega,\kappa}^{0,0}=\{ \lambda(\omega \bde_1+\kappa \bde_3)+\zeta(\omega\bde_2+\kappa\bde_4);\;  \lambda,\zeta\in \bC\}\in \Lat(\bdN).$$
Since $\bdT\in \Col(\bdN)$, the subspace $\bdT \eM_{\omega,\kappa}^{0,0}\in \Lat(\bdN)$ is spanned by vectors
$\bdT(\omega \bde_1+\kappa \bde_3)=(\gamma_{11}\omega+\gamma_{13}\kappa)\bde_1+(\gamma_{31}\omega+\gamma_{33}\kappa)\bde_3$ and
$\bdT(\omega \bde_2+\kappa \bde_4)=(\gamma_{12}\omega+\gamma_{14}\kappa)\bde_1+(\gamma_{22}\omega+\gamma_{24}\kappa)\bde_2+(\gamma_{32}\omega+\gamma_{34}\kappa)\bde_3+(\gamma_{42}\omega+\gamma_{44}\kappa)\bde_4$. Since $\bdN \bdT(\omega \bde_2+\kappa \bde_4)$ is in
 $\bdT \eM_{\omega,\kappa}^{0,0}$ there exist numbers $\alpha, \beta\in \bC$ such that
$\bdN \bdT(\omega \bde_2+\kappa \bde_4)=\alpha\bdT(\omega \bde_1+\kappa \bde_3)+\beta\bdT(\omega \bde_2+\kappa \bde_4)$. It follows that
\begin{equation}\label{eq72}
\begin{split}
\gamma_{22}\omega+\gamma_{24}\kappa&=\alpha(\gamma_{11}\omega+\gamma_{13}\kappa)+\beta(\gamma_{12}\omega+\gamma_{14}\kappa)\\
0&=\beta(\gamma_{22}\omega+\gamma_{24}\kappa)\\
\gamma_{42}\omega+\gamma_{44}\kappa&=\alpha(\gamma_{31}\omega+\gamma_{33}\kappa)+\beta(\gamma_{32}\omega+\gamma_{34}\kappa)\\
0&=\beta(\gamma_{42}\omega+\gamma_{44}\kappa).
\end{split}
\end{equation}
If both numbers $\gamma_{22}\omega+\gamma_{24}\kappa$ and $\gamma_{42}\omega+\gamma_{44}\kappa$
were zero, then we would have $\gamma_{22}\gamma_{44}-\gamma_{24}\gamma_{42}=0$ which is impossible. Hence, $\gamma_{22}\omega+\gamma_{24}\kappa \ne 0$ or $\gamma_{42}\omega+\gamma_{44}\kappa\ne 0$. In any case, number $\beta$ in system of equations \eqref{eq72} has to be zero.
Therefore, the system \eqref{eq72} reduces to
\begin{equation}\label{eq73}
\begin{split}
\gamma_{22}\omega+\gamma_{24}\kappa&=\alpha(\gamma_{11}\omega+\gamma_{13}\kappa)\\
\gamma_{42}\omega+\gamma_{44}\kappa&=\alpha(\gamma_{31}\omega+\gamma_{33}\kappa).
\end{split}
\end{equation}

For arbitrary pair of numbers $(a, b)\in \bC^2\setminus \{(0,0)\}$ let
$$\Omega(a,b)=\{ (\omega,\kappa)\in \bC^2;\; a\omega+b\kappa\ne 0\}$$
be the complement of the line in $\bC^2$ given by the equation $a\xi+b\eta=0$.
If $(\omega,\kappa)\ne (0,0)$ is such that $\gamma_{11}\omega+\gamma_{13}\kappa=0$, then the first equality
in \eqref{eq73} gives $\gamma_{22}\omega+\gamma_{24}\kappa=0$ implying $\Omega(\gamma_{11},\gamma_{13})^c\subseteq \Omega(\gamma_{22},\gamma_{24})^c$. Since both sets are lines we conclude that
they are equal. Since $(\gamma_{11},\gamma_{13})$ and $(\gamma_{22},\gamma_{24})$ determine the same line, there exists a non-zero number $u$ such that
\begin{equation} \label{eq74}
\gamma_{22}=u\gamma_{11}\qquad \text{and}\qquad \gamma_{24}=u\gamma_{13}.
\end{equation}
Similarly one can show that  that there exists $v\ne 0$ such that
\begin{equation} \label{eq75druga}
\gamma_{42}=v\gamma_{31}\qquad \text{and}\qquad \gamma_{44}=v\gamma_{33}.
\end{equation}

Let $(\omega,\kappa)\in \Omega(\gamma_{11},\gamma_{13})\cap \Omega(\gamma_{31},\gamma_{33}) $. It follows from \eqref{eq73} that
$$(\gamma_{11}\gamma_{42}-\gamma_{22}\gamma_{31})\omega^2+(\gamma_{11}\gamma_{44}+\gamma_{13}\gamma_{42}-\gamma_{22}\gamma_{33}-\gamma_{24}\gamma_{31})\omega\kappa+(\gamma_{13}\gamma_{44}-\gamma_{24}\gamma_{33})\kappa^2=0.$$
Since the intersection $\Omega(\gamma_{11},\gamma_{13})\cap \Omega(\gamma_{31},\gamma_{33})$ is an open and dense subset of $\bC^2$  we conclude that
\begin{equation} \label{eq76enacba}
\gamma_{11}\gamma_{42}-\gamma_{22}\gamma_{31}=0,\qquad \qquad \gamma_{13}\gamma_{44}-\gamma_{24}\gamma_{33}=0
\end{equation}
and
\begin{equation} \label{eq77enacba}
\gamma_{11}\gamma_{44}+\gamma_{13}\gamma_{42}-\gamma_{22}\gamma_{33}-\gamma_{24}\gamma_{31}=0.
\end{equation}

To prove that $T$ is of the desired form, we need to consider different cases. If $\gamma_{11}, \gamma_{13}, \gamma_{31}$ and $\gamma_{33}$ are non-zero numbers, then it follows from
\eqref{eq74}, \eqref{eq75druga} and \eqref{eq76enacba} that $u=v$. Hence,
$\bdT=\left[ \begin{smallmatrix}
\gamma_{11} & \gamma_{12} & \gamma_{13} & \gamma_{14} \\
0 & u\gamma_{11} & 0 & u\gamma_{13} \\
\gamma_{31} & \gamma_{32} & \gamma_{33} & \gamma_{34} \\
0 & u\gamma_{31} & 0 & u\gamma_{33}
\end{smallmatrix} \right]$ in this case.

If $\gamma_{11}=0$ and $\gamma_{33}\ne 0$, then, by \eqref{eq74}, $\gamma_{22}=0$ and $\gamma_{44}\ne 0$.  It follows, by \eqref{eq68}, that $\gamma_{13}\ne 0$ and $\gamma_{31}\ne 0$. Hence,
$v=\frac{\gamma_{42}}{\gamma_{31}}=\frac{\gamma_{44}}{\gamma_{33}}=\frac{\gamma_{24}}{\gamma_{13}}=u$ and so
$$\bdT=\left[ \begin{smallmatrix}
0 & \gamma_{12} & \gamma_{13} & \gamma_{14} \\
0 & 0 & 0 & u\gamma_{13} \\
\gamma_{31} & \gamma_{32} & \gamma_{33} & \gamma_{34} \\
0 & u\gamma_{31} & 0 & u\gamma_{33}
\end{smallmatrix} \right].$$

If $\gamma_{11}=0$ and $\gamma_{33}= 0$, then $\gamma_{22}=0$ and $\gamma_{44}= 0$. By \eqref{eq68}, $\gamma_{13}\ne 0$ and $\gamma_{31}\ne 0$ so that \eqref{eq77enacba} reads as
$\gamma_{13}\gamma_{42}-\gamma_{24}\gamma_{31}=0$. Together with \eqref{eq74} and \eqref{eq75druga}
this gives $u=v$ and so $\bdT$ is of the form
$$\bdT=\left[ \begin{smallmatrix}
0 & \gamma_{12} & \gamma_{13} & \gamma_{14} \\
0 & 0 & 0 & u\gamma_{13} \\
\gamma_{31} & \gamma_{32} & 0 & \gamma_{34} \\
0 & u\gamma_{31} & 0 & 0
\end{smallmatrix} \right].$$

Following the same reasoning in the remaining cases we conclude that $\bdT\in\Col(\bdN)$ is of the form
$$\bdT=\left[\begin{smallmatrix}
\gamma_{11} & \gamma_{12} & \gamma_{13} & \gamma_{14} \\
0 & t\gamma_{11} & 0 & t\gamma_{13} \\
\gamma_{31} & \gamma_{32} & \gamma_{33} & \gamma_{34} \\
0 & t\gamma_{31} & 0 & t\gamma_{33}
\end{smallmatrix} \right],$$
where $t, \gamma_{11}, \gamma_{12}, \gamma_{13}, \gamma_{14},
\gamma_{31}, \gamma_{32}, \gamma_{33}, \gamma_{34} \in \bC$ are such that
$\det(\bdT)=\bigl(t(\gamma_{11}\gamma_{33}-\gamma_{13}\gamma_{31})\bigr)^2\ne 0$.

To prove the opposite implication, let
$$\bdT=\left[ \begin{smallmatrix}
\gamma_{11} & \gamma_{12} & \gamma_{13} & \gamma_{14} \\
0 & t\gamma_{11} & 0 & t\gamma_{13} \\
\gamma_{31} & \gamma_{32} & \gamma_{33} & \gamma_{34} \\
0 & t\gamma_{31} & 0 & t\gamma_{33}
\end{smallmatrix} \right]$$
be such that $t(\gamma_{11}\gamma_{33}-\gamma_{13}\gamma_{31})\ne 0$.
Then $\bdT\in \bigl(\Alg\Lat(\bdN)'\bigr)^{-1}$. We have  to see that $\bdT \eM_{\omega,\kappa}^{\rho,\theta}\in \Lat(\bdN)$ for all $\omega,\kappa,\rho,\theta\in \bC$ with $(\omega,\kappa)\ne (0,0)$. Since
$$ \left[ \begin{smallmatrix}
\gamma_{11} & \gamma_{12} & \gamma_{13} & \gamma_{14} \\
0 & t\gamma_{11} & 0 & t\gamma_{13} \\
\gamma_{31} & \gamma_{32} & \gamma_{33} & \gamma_{34} \\
0 & t\gamma_{31} & 0 & t\gamma_{33}
\end{smallmatrix} \right] \left[\begin{smallmatrix} \lambda\omega+\zeta\rho\\ \zeta\omega\\ \lambda\kappa+\zeta\theta\\ \zeta\kappa\end{smallmatrix}\right] =
\left[\begin{smallmatrix} \lambda(\gamma_{11}\omega+\gamma_{13}\kappa)+\zeta(\gamma_{11}\rho+\gamma_{12}\omega+\gamma_{13}\theta+\gamma_{14}\kappa)\\ t\zeta(\gamma_{11}\omega+\gamma_{13}\kappa)\\ \lambda(\gamma_{31}\omega+\gamma_{33}\kappa)+\zeta(\gamma_{31}\rho+\gamma_{32}\omega+\gamma_{33}\theta+\gamma_{34}\kappa)\\ tz\zeta(\gamma_{31}\omega+\gamma_{33}\kappa)\end{smallmatrix}\right]$$
for all $\lambda,\zeta\in \bC$ we have
$\bdT \eM_{\omega,\kappa}^{\rho,\theta}=\eM_{\gamma_{11}\omega+\gamma_{13}\kappa,\gamma_{31}\omega+\gamma_{33}\kappa}^{\gamma_{11}\rho+\gamma_{12}\omega+\gamma_{13}\theta+\gamma_{14}\kappa,\gamma_{31}\rho+\gamma_{32}\omega+\gamma_{33}\theta+\gamma_{34}\kappa}\in \Lat(\bdN)$.
\end{proof}

{\it Acknowledgments.}
The paper is a part of the project {\em Distinguished subspaces of a linear operator} and the work of the first author
was partially supported by the Slovenian Research Agency through the research program P2-0268. 
The second author acknowledges financial support from the Slovenian Research Agency, Grants No. P1-0222, J1-2453 and J1-2454. 



\begin{thebibliography}{99}

\bibitem{Bae}
	R. Baer,
	\textit{Linear algebra and projective geometry}, Vol. II,
	Academic Press Inc., New York, 1952.

\bibitem{BF}
	L. Brickman, P. A. Fillmore,
	\textit{The invariant subspace lattice of a linear transformation},
	Canad. J. Math., 19 (1967), 810--822.

\bibitem{Con}
	J. B. Conway,
	\textit{A course in operator theory},
	 Graduate Studies in Mathematics \textbf{21},  American Mathematical Society, Providence, (2000).
	
\bibitem{DP}
	R. G. Douglas, C. Pearcy,
	\textit{On a topology for invariant subspaces},
	J. Funct. Anal. 2 (1968), 323--341.

\bibitem{FHL}
	P. A. Fillmore, D. A. Herrero, W. E. Longstaff,
	\textit{The hyperinvariant subspace lattice of a linear transformation},
	Linear Algebra Appl., 17 (1977), 125--132.
	
\bibitem{MM}
	M. J. McAsey, P. S. Muhly,
	\textit{On projective equivalence of invariant subspace lattices},
	Linear Algebra Appl., 43 (1982), 167--179.

\bibitem{Ong}
	S.-C. Ong,
	\textit{What kind of operators have few invariant subspaces?},
	Linear Algebra Appl., 95 (1987), 181--185.

\end{thebibliography}
\end{document}